\documentclass{amsart}%
\usepackage{amsfonts}
\usepackage{amsmath}
\usepackage{amssymb}
\usepackage{graphicx}
\usepackage[latin1]{inputenc}
\usepackage[english]{babel}%
\setcounter{MaxMatrixCols}{30}
%TCIDATA{OutputFilter=latex2.dll}
%TCIDATA{Version=5.00.0.2570}
%TCIDATA{CSTFile=amsartci.cst}
%TCIDATA{Created=Thursday, December 29, 2011 18:08:46}
%TCIDATA{LastRevised=Tuesday, October 08, 2013 09:59:45}
%TCIDATA{<META NAME="GraphicsSave" CONTENT="32">}
%TCIDATA{<META NAME="SaveForMode" CONTENT="1">}
%TCIDATA{<META NAME="DocumentShell" CONTENT="Articles\SW\AMS Journal Article">}
%TCIDATA{Language=American English}
\oddsidemargin=-5mm
\evensidemargin=-5mm
\newtheorem{theorem}{Theorem}[section]
\theoremstyle{plain}

\newtheorem{corollary}[theorem]{Corollary}

\newtheorem{lemma}[theorem]{Lemma}

\newtheorem{proposition}[theorem]{Proposition}

\numberwithin{equation}{section}
\begin{document}
\title[Parabolic-Type Equations With Variable Coefficients]{P-adic Elliptic Quadratic Forms, Parabolic-Type Pseudodifferential Equations
With Variable Coefficients, and Markov Processes.}
\author{O. F. Casas-Sánchez}
\address{Universidad Nacional de Colombia, Departamento de Matemáticas \\
Ciudad Universitaria, Bogotá D.C., Colombia.}
\email{ofcasass@unal.edu.co}
\author{W. A. Zúñiga-Galindo}
\address{Centro de Investigación y de Estudios Avanzados del Instituto Politécnico Nacional\\
Departamento de Matemáticas, Unidad Querétaro\\
Libramiento Norponiente \#2000, Fracc. Real de Juriquilla. Santiago de
Querétaro, Qro. 76230\\
México.}
\email{wazuniga@math.cinvestav.edu.mx}
\thanks{The second author was partially supported by CONACYT under Grant \# 127794.}
\subjclass[2000]{Primary 35S99, 47S10, 60J25; Secondary 11S85}
\keywords{Pseudodifferential operators, Markov processes, non-Archimedean analysis,
p-adic fields.}

\begin{abstract}
In this article we study the Cauchy problem for a new class of parabolic-type
pseudodifferential equations with variable coefficients for which the
fundamental solutions are transition density functions of Markov processes in
the four dimensional vector space over the field of $p$-adic numbers.

\end{abstract}
\maketitle

\section{Introduction}

The stochastic processes over the $p$-adics, or more generally over
ultrametric spaces, have attracted a lot of attention during the last thirty
years due to their connections with models of complex systems, such as glasses
and proteins, see e.g. \cite{Av-2}, \cite{Av-3}, \cite{Av-4}, \cite{Ch-Zuniga}%
, \cite{Ka}, \cite{K-Kos}, \cite{M-P-V}, \cite{R-T-V}, \cite{Var},
\cite{V-V-Z}, and the references therein. In particular, the Markov stochastic
process over the $p$-adics whose transition density functions are fundamental
solutions of pseudodifferential equations of parabolic-type have appeared in
several new models of complex systems. In \cite[Chapters 4, 5]{Koch} A. N.
Kochubei presented a general theory for one-dimensional parabolic-type
pseudodifferential equations with variable coefficients, whose fundamental
solutions are transition density functions for Markov processes in the
$p$-adic line. Taking into account the physical motivations before mentioned,
it is natural to try to develop a general $n$-dimensional theory for
pseudodifferential equations of parabolic-type over the $p$-adics. In this
article using the results of \cite{C-Zu1}-\cite[Chapter 4]{Koch} we present
four-dimensional analogs of the pseudodifferential equations of parabolic-type
with variable coefficients studied by A. N. Kochubei in \cite{Koch}, see also
\cite{Koch0}. We should mention that other types of $n$-dimensional
pseudodifferential equations have bee studied in \cite{Var}, \cite{Z-G3},
\cite{R-Zu}, \cite{G-Zuniga}, \cite{T-Zuniga}, \cite{Ch-Zuniga}, among others.
To explain the novelty of our contribution, we begin by saying that the theory
developed in \cite{Koch} depends heavily on an explicit formula for the
Fourier transform of the Riesz kernel associated to the symbol of the
Vladimirov pseudodifferential operator. It turns out, that this formula is a
particular case of the functional equation satisfied for certain local zeta
functions (i.e.\ distributions in an arithmetical framework) attached to
quadratic forms. This functional equation was established by S. Rallis and G.
Schiffmann in \cite{R-S}, see also the references cited in \cite{C-Zu1}. In
\cite{C-Zu1} the authors studied the Riesz kernels and pseudodifferential
operators associated to elliptic quadratic forms of dimensions two and four.
Among several results, we determined explicit formulas for the Fourier
transform of the Riesz kernels. In this article we use these results jointly
with classical techniques for parabolic equations to study the Cauchy problem
for a new class of parabolic-type pseudodifferential equations for which the
fundamental solutions are transition density functions of Markov processes in
the four dimensional vector space over the field of $p$-adic numbers.

The article is organized as follows. In Section \ref{Section1}, we fix the
notation and collect some results about Riesz kernels associated to elliptic
quadratic forms of dimension four. In Section \ref{Sect.Heat.Kernel} we
introduce the heat kernels and \ certain pseudodifferential operators attached
to elliptic quadratic forms and show some basic results needed for the other
sections. In Section \ref{Sect.Cauchy.Problem} we study the Cauchy problem for
pseudodifferential equations with constant coefficients, see Theorem
\ref{Thm1}. In Section \ref{Sect.Eq.Var.Coeff.} we study the existence and
uniqueness of the Cauchy problem for pseudodifferential equations with
variable coefficients, see Theorems \ref{Thm2}, \ref{Thm3}, we also discuss
the probabilistic meaning of the fundamental solutions, see Theorem \ref{Thm4}.

\section{\label{Section1}Preliminaries}

In this section we fix the notation and collect some basic results on $p$-adic
analysis that we will use through the article. For a detailed exposition on
$p$-adic analysis the reader may consult \cite{A-K-S}, \cite{Taibleson},
\cite{V-V-Z}.

\subsection{The field of $p$-adic numbers}

Along this article $p$ will denote a prime number different from $2$. The
field of $p-$adic numbers $\mathbb{Q}_{p}$ is defined as the completion of the
field of rational numbers $\mathbb{Q}$ with respect to the $p-$adic norm
$|\cdot|_{p}$, which is defined as
\[
|x|_{p}=%
\begin{cases}
0 & \text{if }x=0\\
p^{-\gamma} & \text{if }x=p^{\gamma}\dfrac{a}{b},
\end{cases}
\]
where $a$ and $b$ are integers coprime with $p$. The integer $\gamma:=ord(x)$,
with $ord(0):=+\infty$, is called the\textit{ }$p-$\textit{adic order of} $x$.
We extend the $p-$adic norm to $\mathbb{Q}_{p}^{n}$ by taking%
\[
||x||_{p}:=\max_{1\leq i\leq n}|x_{i}|_{p},\qquad\text{for }x=(x_{1}%
,\dots,x_{n})\in\mathbb{Q}_{p}^{n}.
\]
We define $ord(x)=\min_{1\leq i\leq n}\{ord(x_{i})\}$, then $||x||_{p}%
=p^{-\text{ord}(x)}$. Any $p-$adic number $x\neq0$ has a unique expansion
$x=p^{ord(x)}\sum_{j=0}^{\infty}x_{j}p^{j}$, where $x_{j}\in\{0,1,2,\dots
,p-1\}$ and $x_{0}\neq0$. By using this expansion, we define \textit{the
fractional part of }$x\in\mathbb{Q}_{p}$, denoted $\{x\}_{p}$, as the rational
number
\[
\{x\}_{p}=%
\begin{cases}
0 & \text{if }x=0\text{ or }ord(x)\geq0\\
p^{\text{ord}(x)}\sum_{j=0}^{-ord(x)-1}x_{j}p^{j} & \text{if }ord(x)<0.
\end{cases}
\]
For $\gamma\in\mathbb{Z}$, denote by $B_{\gamma}^{n}(a)=\{x\in\mathbb{Q}%
_{p}^{n}:||x-a||_{p}\leq p^{\gamma}\}$ \textit{the ball of radius }$p^{\gamma
}$ \textit{with center at} $a=(a_{1},\dots,a_{n})\in\mathbb{Q}_{p}^{n}$, and
take $B_{\gamma}^{n}(0):=B_{\gamma}^{n}$. Note that $B_{\gamma}^{n}%
(a)=B_{\gamma}(a_{1})\times\cdots\times B_{\gamma}(a_{n})$, where $B_{\gamma
}(a_{i}):=\{x\in\mathbb{Q}_{p}:|x-a_{i}|_{p}\leq p^{\gamma}\}$ is the
one-dimensional ball of radius $p^{\gamma}$ with center at $a_{i}\in
\mathbb{Q}_{p}$. The ball $B_{0}^{n}(0)$ is equals the product of $n$ copies
of $B_{0}(0):=\mathbb{Z}_{p}$, \textit{the ring of }$p-$\textit{adic
integers}. We denote by $\Omega(\left\Vert x\right\Vert _{p})$ the
characteristic function of $B_{0}^{n}(0)$. For more general sets, say Borel
sets, we use ${\LARGE 1}_{A}\left(  x\right)  $ to denote the characteristic
function of $A$.

\subsection{The Bruhat-Schwartz space and Fourier transform}

A complex-valued function $\varphi$ defined on $\mathbb{Q}_{p}^{n}$ is
\textit{called locally constant} if for any $x\in\mathbb{Q}_{p}^{n}$ there
exist an integer $l(x)$ such that%
\begin{equation}
\varphi(x+x^{\prime})=\varphi(x)\text{ for }x^{\prime}\in B_{l(x)}^{n}.
\label{local_constancy}%
\end{equation}
A function $\varphi:\mathbb{Q}_{p}^{n}\rightarrow\mathbb{C}$ is called a
\textit{Bruhat-Schwartz function (or a test function)} if it is locally
constant with compact support. The $\mathbb{C}$-vector space of
Bruhat-Schwartz functions is denoted by $S(\mathbb{Q}_{p}^{n}):=S$. For
$\varphi\in S(\mathbb{Q}_{p}^{n})$, the largest of such number $l=l(\varphi)$
satisfying (\ref{local_constancy}) is called \textit{the exponent of local
constancy of} $\varphi$.

Let $S^{\prime}(\mathbb{Q}_{p}^{n}):=S^{\prime}$ denote the set of all
functionals (distributions) on $S(\mathbb{Q}_{p}^{n})$. All functionals on
$S(\mathbb{Q}_{p}^{n})$ are continuous.

Set $\chi(y)=\exp(2\pi i\{y\}_{p})$ for $y\in\mathbb{Q}_{p}$. The map
$\chi(\cdot)$ is an additive character on $\mathbb{Q}_{p}$, i.e. a continuos
map from $\mathbb{Q}_{p}$ into the unit circle satisfying $\chi(y_{0}%
+y_{1})=\chi(y_{0})\chi(y_{1})$, $y_{0},y_{1}\in\mathbb{Q}_{p}$.

Given $\xi=(\xi_{1},\dots,\xi_{n})$ and $x=(x_{1},\dots,x_{n})\in
\mathbb{Q}_{p}^{n}$, we set $\xi\cdot x:=\sum_{j=1}^{n}\xi_{j}x_{j}$. The
Fourier transform of $\varphi\in S(\mathbb{Q}_{p}^{n})$ is defined as
\[
(\mathcal{F}\varphi)(\xi)=\int_{\mathbb{Q}_{p}^{n}}\chi(-\xi\cdot
x)\varphi(x)d^{n}x\quad\text{for }\xi\in\mathbb{Q}_{p}^{n},
\]
where $d^{n}x$ is the Haar measure on $\mathbb{Q}_{p}^{n}$ normalized by the
condition $vol(B_{0}^{n})=1$. The Fourier transform is a linear isomorphism
from $S(\mathbb{Q}_{p}^{n})$ onto itself satisfying $(\mathcal{F}%
(\mathcal{F}\varphi))(\xi)=\varphi(-\xi)$. We will also use the notation
$\mathcal{F}_{x\rightarrow\xi}\varphi$ and $\widehat{\varphi}$\ for the
Fourier transform of $\varphi$.

The Fourier transform $\mathcal{F}\left[  f\right]  $ of a distribution $f\in
S^{\prime}\left(  \mathbb{Q}_{p}^{n}\right)  $ is defined by%
\[
\left(  \mathcal{F}\left[  f\right]  ,\varphi\right)  =\left(  f,\mathcal{F}%
\left[  \varphi\right]  \right)  \text{ for all }\varphi\in S\left(
\mathbb{Q}_{p}^{n}\right)  \text{.}%
\]
The Fourier transform $f\rightarrow\mathcal{F}\left[  f\right]  $ is a linear
isomorphism from $S^{\prime}\left(  \mathbb{Q}_{p}^{n}\right)  $\ onto
$S^{\prime}\left(  \mathbb{Q}_{p}^{n}\right)  $. Furthermore, $f=\mathcal{F}%
\left[  \mathcal{F}\left[  f\right]  \left(  -\xi\right)  \right]  $.

\subsection{Elliptic Quadratic Forms and Riesz Kernels}

We set
\[
f(x)=x_{1}^{2}-ax_{2}^{2}-px_{3}^{2}+apx_{4}^{2}\text{, \ }f^{\circ}\left(
\xi\right)  =ap\xi_{1}^{2}-p\xi_{2}^{2}-a\xi_{3}^{2}+\xi_{4}^{2}%
\]
with $a\in\mathbb{Z}$ a quadratic non-residue module $p$. The form $f(x)$ is
elliptic, i.e. $f(x)=0\Leftrightarrow x=0$, and $f^{\circ}\left(  \xi\right)
=ap{f\left(  \frac{\xi_{1}}{1},\frac{\xi_{2}}{-a},\frac{\xi_{3}}{-p},\frac
{\xi_{4}}{ap}\right)  }$, hence $f^{\circ}\left(  \xi\right)  $ is also
elliptic. The following estimate will be used frequently along the article:
\ there exist positive constants $A$, $B$ such that%

\begin{equation}
B\left\Vert x\right\Vert _{p}^{2}\leq|f\left(  x\right)  |_{p}\leq A\left\Vert
x\right\Vert _{p}^{2}\text{ for any }x\in\mathbb{Q}_{p}^{n},
\label{Zuniga_ineq}%
\end{equation}
c.f. \cite[Lemma 1]{Z-G3}.

The Riesz kernels attached to $f$\ and $f^{\circ}$ satisfy the following
functional equation:
\begin{equation}
\int\limits_{\mathbb{Q}_{p}^{4}\setminus\{0\}}|f(z)|^{s-2}\widehat{\varphi
}(z)d^{4}z=\frac{1-p^{s-2}}{1-p^{-s}}\int\limits_{\mathbb{Q}_{p}^{4}%
\setminus\{0\}}|f^{\circ}\left(  \xi\right)  |_{p}^{-s}\varphi(\xi)d^{4}\xi,
\label{Functional_eq.}%
\end{equation}
for $\varphi\in S\left(  \mathbb{Q}_{p}^{4}\right)  $ and $s\in\mathbb{C}$,
c.f. \cite[Proposition 2.8]{C-Zu1}. For further details about Riesz kernels
attached to quadratic forms we refer the reader to \cite{C-Zu1}.

\begin{lemma}
\label{lemma_0}If $\alpha>0$, then
\[
|f^{\circ}(x)|_{p}^{\alpha}=\frac{1-p^{\alpha}}{1-p^{-\alpha-2}}%
\int\limits_{\mathbb{Q}_{p}^{4}}|f(\xi)|_{p}^{-\alpha-2}[\chi(\xi\cdot
x)-1]d^{4}\xi.
\]

\end{lemma}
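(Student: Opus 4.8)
The plan is to establish the identity by showing that its right-hand side, which I will call $G(x)$, and the function $|f^{\circ}(x)|_{p}^{\alpha}$ coincide as distributions on $\mathbb{Q}_{p}^{4}$, and then to upgrade this to a pointwise identity using that both are continuous functions. Throughout put $c_{\alpha}:=\frac{1-p^{\alpha}}{1-p^{-\alpha-2}}$, so that $G(x)=c_{\alpha}\int_{\mathbb{Q}_{p}^{4}}|f(\xi)|_{p}^{-\alpha-2}[\chi(\xi\cdot x)-1]\,d^{4}\xi$.

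First I would check that $G$ is well defined and continuous. For $x\neq0$ the integrand vanishes on the ball $\|\xi\|_{p}\leq\|x\|_{p}^{-1}$, since there $|\xi\cdot x|_{p}\leq\|\xi\|_{p}\|x\|_{p}\leq1$ and hence $\chi(\xi\cdot x)=1$; on the complement, $(\ref{Zuniga_ineq})$ gives $|f(\xi)|_{p}^{-\alpha-2}\leq B^{-\alpha-2}\|\xi\|_{p}^{-2\alpha-4}$, and $\int_{\|\xi\|_{p}>R}\|\xi\|_{p}^{-2\alpha-4}\,d^{4}\xi<\infty$ precisely because $\alpha>0$. Hence the integral converges absolutely, $G(0)=0$, and writing $\chi(\xi\cdot x)-\chi(\xi\cdot x')=\chi(\xi\cdot x')[\chi(\xi\cdot(x-x'))-1]$ the same estimate bounds $|G(x)-G(x')|$ by a tail of a convergent integral, which tends to $0$ as $\|x-x'\|_{p}\to0$; so $G$ is continuous on $\mathbb{Q}_{p}^{4}$. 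The function $|f^{\circ}|_{p}^{\alpha}$ is continuous as well: it is locally constant and positive off $0$ by ellipticity, and $|f^{\circ}(x)|_{p}^{\alpha}\leq A^{\alpha}\|x\|_{p}^{2\alpha}\to0$ as $x\to0$.

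Next I would pair $G$ with an arbitrary $\varphi\in S(\mathbb{Q}_{p}^{4})$. For $x$ in the compact support of $\varphi$ the inner integral is dominated by a fixed finite constant, so Fubini applies; using $\int_{\mathbb{Q}_{p}^{4}}\chi(\xi\cdot x)\varphi(x)\,d^{4}x=\widehat{\varphi}(-\xi)$ and the substitution $\xi\mapsto-\xi$ (legitimate since $|f(-\xi)|_{p}=|f(\xi)|_{p}$) one obtains
\[
\int_{\mathbb{Q}_{p}^{4}}G(x)\varphi(x)\,d^{4}x=c_{\alpha}\int_{\mathbb{Q}_{p}^{4}}|f(\xi)|_{p}^{-\alpha-2}\bigl[\widehat{\varphi}(\xi)-\widehat{\varphi}(0)\bigr]\,d^{4}\xi .
\]
The right-hand integral I would recognize as the value at $\beta=-\alpha-2$ of the analytic continuation of the local zeta function $Z(\beta):=\int_{\mathbb{Q}_{p}^{4}}|f(z)|_{p}^{\beta}\widehat{\varphi}(z)\,d^{4}z$ (convergent for $\mathrm{Re}(\beta)>-2$). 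Indeed, writing $\widehat{\varphi}=\widehat{\varphi}(0)\,\Omega(\|\cdot\|_{p})+\psi_{0}$ with $\psi_{0}\in S$ vanishing near $0$, we get $Z(\beta)=\widehat{\varphi}(0)\,Z_{0}(\beta)+E(\beta)$, where $Z_{0}(\beta)=\int_{\mathbb{Z}_{p}^{4}}|f(z)|_{p}^{\beta}\,d^{4}z$ and $E(\beta)=\int|f(z)|_{p}^{\beta}\psi_{0}(z)\,d^{4}z$ is entire (its integrand has compact support bounded away from $0$). Decomposing into spheres and rescaling $z\mapsto p^{\pm1}z$ one finds $\int_{\|z\|_{p}=p^{k}}|f(z)|_{p}^{\beta}\,d^{4}z=p^{k(2\beta+4)}I_{0}(\beta)$ with $I_{0}(\beta)=\int_{\|z\|_{p}=1}|f(z)|_{p}^{\beta}\,d^{4}z$, so that $Z_{0}(\beta)=I_{0}(\beta)\sum_{k\leq0}p^{k(2\beta+4)}$ and $\int_{\|z\|_{p}>1}|f(z)|_{p}^{\beta}\,d^{4}z=I_{0}(\beta)\sum_{k\geq1}p^{k(2\beta+4)}$; since the two geometric series, viewed as rational functions of $q=p^{2\beta+4}$, sum to $\frac{1}{1-q^{-1}}+\frac{q}{1-q}=0$, these two pieces are negatives of one another. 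Consequently, for $\mathrm{Re}(\beta)<-2$ the continuation of $Z$ equals $E(\beta)-\widehat{\varphi}(0)\int_{\|z\|_{p}>1}|f(z)|_{p}^{\beta}\,d^{4}z=\int_{\mathbb{Q}_{p}^{4}}|f(z)|_{p}^{\beta}[\widehat{\varphi}(z)-\widehat{\varphi}(0)]\,d^{4}z$, and since $\alpha>0$ places $-\alpha-2$ in this half-plane (away from the poles of $Z_{0}$), the displayed pairing above equals $c_{\alpha}Z(-\alpha-2)$.

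Finally I would apply the functional equation $(\ref{Functional_eq.})$ with $s=-\alpha$; there $\mathrm{Re}(s)<0$, so neither side has a pole and the left side, interpreted through its continuation in $s$, is exactly $Z(-\alpha-2)$, giving $Z(-\alpha-2)=\frac{1-p^{-\alpha-2}}{1-p^{\alpha}}\int_{\mathbb{Q}_{p}^{4}}|f^{\circ}(\xi)|_{p}^{\alpha}\varphi(\xi)\,d^{4}\xi=c_{\alpha}^{-1}\int_{\mathbb{Q}_{p}^{4}}|f^{\circ}|_{p}^{\alpha}\varphi$. Combining with the two previous relations, $\int G\varphi=\int|f^{\circ}|_{p}^{\alpha}\varphi$ for every $\varphi\in S$, hence $G=|f^{\circ}|_{p}^{\alpha}$ as distributions; as both are continuous, their difference is a continuous function vanishing almost everywhere, hence identically (a nonempty open set has positive Haar measure), which is the assertion. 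I expect the main obstacle to be the zeta-function step: correctly regularizing the divergent ``constant term'' $\widehat{\varphi}(0)Z_{0}(-\alpha-2)$. The identity $\int_{\|z\|_{p}>1}|f|_{p}^{\beta}\,d^{4}z=-Z_{0}(\beta)$ is what lets it recombine with $E(\beta)$ into the clean kernel $\int|f|_{p}^{\beta}[\widehat{\varphi}(z)-\widehat{\varphi}(0)]\,d^{4}z$; everything else — the convergence bounds, Fubini, and the passage from distributional to pointwise equality — is routine.
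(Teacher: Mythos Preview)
Your proof is correct and follows essentially the same route as the paper, which simply invokes the functional equation (\ref{Functional_eq.}) and the argument for Proposition 2.3 in \cite{Koch}; you have written out in full the regularization of the divergent zeta integral and the passage from distributional to pointwise equality that Kochubei's argument contains. Your treatment of the analytic continuation via $Z_{0}(\beta)+\int_{\|z\|_{p}>1}|f|_{p}^{\beta}\,d^{4}z=0$ is exactly the mechanism behind the ``$\widehat{\varphi}(z)-\widehat{\varphi}(0)$'' subtraction, so there is no substantive difference in method.
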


\begin{proof}
The formula follows from (\ref{Functional_eq.}) by using the argument given in
\cite{Koch} for Proposition 2.3.
\end{proof}

\section{\label{Sect.Heat.Kernel}Heat Kernels}

We define \textit{the heat kernel} attached to $f^{\circ}$ as%
\[
Z(x,t):=Z(x,t;f^{\circ},\alpha,\kappa)=%
%TCIMACRO{\dint \limits_{\mathbb{Q}_{p}^{4}}}%
%BeginExpansion
{\displaystyle\int\limits_{\mathbb{Q}_{p}^{4}}}
%EndExpansion
\chi(\xi\cdot x)e^{-\kappa t|f^{\circ}(\xi)|_{p}^{\alpha}}d^{4}\xi
\]
for \ $x\in\mathbb{Q}_{p}^{4}$, $t>0$, $\alpha>0$, and $\kappa>0$. When
considering $Z(x,t)$ as a function of $x$ for $t$ fixed we will write
$Z_{t}(x)$. The heat kernels considered in this article are a particular case
of those studied in \cite{Z-G3}.

Given $M\in\mathbb{Z}$, we set
\[
Z_{t}^{(M)}(x):=%
%TCIMACRO{\dint \limits_{\mathbb{Q}_{p}^{4}}}%
%BeginExpansion
{\displaystyle\int\limits_{\mathbb{Q}_{p}^{4}}}
%EndExpansion
\Omega(p^{-M}||\xi||_{p})\chi(\xi\cdot x)e^{-\kappa t|f^{\circ}(\xi
)|_{p}^{\alpha}}d^{4}\xi
\]
for $x\in\mathbb{Q}_{p}^{4}$, $t>0$, $\alpha>0$, and $\kappa>0$. Taking into
account that
\[
\left\vert \Omega(p^{-M}||\xi||_{p})\chi(\xi\cdot x)e^{-\kappa t|f^{\circ}%
(\xi)|_{p}^{\alpha}}\right\vert \leq e^{-\kappa t|f^{\circ}(\xi)|_{p}^{\alpha
}}\leq e^{-\kappa tB^{\alpha}||\xi||_{p}^{2\alpha}}\in L^{1}(\mathbb{Q}%
_{p}^{4}),
\]
c.f. (\ref{Zuniga_ineq}), the Dominated Convergence Theorem implies that
\[
\lim_{M\rightarrow\infty}Z_{t}^{(M)}(x)=Z_{t}(x)\text{ for }x\in\mathbb{Q}%
_{p}^{4}\text{ and for }t>0\text{.}%
\]

\begin{proposition}
\label{Prop1} For $x,\xi\in\mathbb{Q}_{p}^{4}\setminus\{0\}$ the following
assertions hold:

\noindent(i) ${|f(x+\xi)|_{p}=|f(x)|_{p}}$, for $||\xi||_{p}<p^{-1}||x||_{p}$;

\noindent(ii) ${Z(x,t)=\sum_{m=1}^{\infty}\frac{(-1)^{m}}{m!}\left(
\frac{1-p^{\alpha m}}{1-p^{-\alpha m-2}}\right)  \kappa^{m}t^{m}%
|f(x)|_{p}^{-\alpha m-2}}${;}

\noindent(iii) ${Z(x+\xi,t)=Z(x,t)}${,}${\text{ for }||\xi||}_{p}%
{<p^{-1}||x||}_{p}${;}

\noindent(iv) ${Z(x,t)\geq0}${,}${\text{ for }x\in\mathbb{Q}_{p}^{4}\text{ and
}t>0}${.}
\end{proposition}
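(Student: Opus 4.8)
The plan is to prove the four assertions essentially in the order listed, since each later one leans on the earlier ones.

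For (i), the key observation is the two-sided estimate (\ref{Zuniga_ineq}): since $f$ is a quadratic form, writing $f(x+\xi)=f(x)+B(x,\xi)+f(\xi)$ where $B(x,\xi)$ is the associated bilinear form, I would bound $|B(x,\xi)|_p$ and $|f(\xi)|_p$ by powers of $\|\xi\|_p\|x\|_p$ and $\|\xi\|_p^2$ respectively (using that the coefficients of $f$ have $p$-adic absolute value at most $1$, together with (\ref{Zuniga_ineq}) applied to $\xi$). Under the hypothesis $\|\xi\|_p < p^{-1}\|x\|_p$ both of these are strictly smaller than $|f(x)|_p \asymp \|x\|_p^2$, so the ultrametric inequality forces $|f(x+\xi)|_p = |f(x)|_p$. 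One must be slightly careful with the constants $A,B$; the factor $p^{-1}$ in the hypothesis is exactly what absorbs them since $p\ge 3$.

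For (ii), I would start from Lemma \ref{lemma_0} applied with $\alpha$ replaced by $\alpha m$ for each $m\ge 1$, giving $|f^{\circ}(\xi)|_p^{\alpha m}$ as an explicit integral against $|f(\cdot)|_p^{-\alpha m-2}$. Then I expand $e^{-\kappa t|f^{\circ}(\xi)|_p^{\alpha}}=\sum_{m=0}^{\infty}\frac{(-1)^m}{m!}\kappa^m t^m |f^{\circ}(\xi)|_p^{\alpha m}$, interchange sum and integral in the definition of $Z(x,t)$ (justified by the dominated-convergence bound already displayed in the text, the $m=0$ term integrating to the Dirac mass which is killed since $x\neq 0$), and substitute Lemma \ref{lemma_0} term by term. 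After the substitution, the $\chi(\xi\cdot x)-1$ from the lemma's formula is exactly what is needed so that the $\xi$-integral reproduces $|f(x)|_p^{-\alpha m-2}$ by Fourier inversion. I expect this interchange-of-sum-and-integral bookkeeping — and the verification that no convergence is lost when one plugs a series of integral identities into another series — to be the main technical obstacle; everything else is formal manipulation.

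Assertions (iii) and (iv) are then quick corollaries. For (iii), the series in (ii) depends on $x$ only through $|f(x)|_p$, and by (i) we have $|f(x+\xi)|_p=|f(x)|_p$ whenever $\|\xi\|_p<p^{-1}\|x\|_p$, so $Z(x+\xi,t)=Z(x,t)$ termwise, hence for the sum. For (iv), nonnegativity for $x\neq 0$ follows because $Z(x,t)=\lim_{M\to\infty}Z_t^{(M)}(x)$ and each $Z_t^{(M)}(x)$ can be recognized (after a further limit/approximation) as, or dominated below by, an integral of a nonnegative heat-type kernel; alternatively, and more robustly, one invokes that $e^{-\kappa t|f^{\circ}(\xi)|_p^{\alpha}}$ is a positive-definite function on $\mathbb{Q}_p^4$ (it is the pointwise limit of convex combinations, via the subordination/Lévy–Khinchine structure for the Vladimirov-type symbol $|f^{\circ}|_p^{\alpha}$, which is a negative-definite function since $f^{\circ}$ is elliptic), so its Fourier transform $Z_t(x)$ is a nonnegative measure; continuity in $x$ then gives pointwise nonnegativity everywhere, including $x=0$.
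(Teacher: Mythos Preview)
Your outlines for (i), (iii), and (iv) are essentially sound. For (i) the paper takes a slightly more concrete route: it normalizes $x=p^{\mathrm{ord}(x)}u$ with $\|u\|_p=1$ and uses that $|f(u)|_p\in\{1,p^{-1}\}$ for such $u$; your bilinear-form argument works too, but to get the stated radius $p^{-1}\|x\|_p$ you implicitly need the sharp lower bound $B=p^{-1}$ in (\ref{Zuniga_ineq}), which is true for this particular $f$ but is not recorded there. For (iv) the paper simply cites \cite[Theorem~2]{Z-G3}; your negative-definiteness argument is a legitimate alternative, since Lemma~\ref{lemma_0} is exactly a L\'evy--Khinchine representation with nonnegative L\'evy density (note the constant $\frac{1-p^{\alpha}}{1-p^{-\alpha-2}}$ is negative).

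There is, however, a genuine gap in your plan for (ii). You propose to expand the exponential and interchange $\sum_{m}$ with $\int_{\mathbb{Q}_p^4}\chi(\xi\cdot x)\,(\cdot)\,d^4\xi$, but for every $m\ge 1$ the individual integral $\int_{\mathbb{Q}_p^4}\chi(\xi\cdot x)\,|f^{\circ}(\xi)|_p^{\alpha m}\,d^4\xi$ diverges: the integrand has modulus $\asymp\|\xi\|_p^{2\alpha m}$, which is not integrable at infinity. The dominated-convergence bound displayed in the text controls the full exponential, not the partial sums of its power series, and those partial sums are \emph{not} dominated by any integrable function. Similarly, carrying out ``Fourier inversion'' after plugging in Lemma~\ref{lemma_0} produces Dirac deltas rather than honest pointwise values, so that step is only formal.

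The paper resolves precisely this issue by working with the truncations $Z_t^{(M)}$ introduced just before the proposition. On the compact set $\|\xi\|_p\le p^M$ the power-series expansion and Fubini are trivially valid; then the functional equation (\ref{Functional_eq.}), applied to the test function $\xi\mapsto\Omega(p^{-M}\|\xi\|_p)\chi(\xi\cdot x)$, computes the $m$-th term as $\frac{1-p^{\alpha m}}{1-p^{-\alpha m-2}}\,|f(x)|_p^{-\alpha m-2}$ (part (i) enters here, via a change of variables $z=x-p^{M}y$). Since for $M$ large this value is already independent of $M$, letting $M\to\infty$ gives the series for $Z(x,t)$. Your proposal is missing this truncation step, and without it the argument does not go through.
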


\begin{proof}
(i) Set $x=p^{ord(x)}u$,$\ \xi=p^{ord(\xi)}v$ with $||u||_{p}=||v||_{p}=1$.
Then
\begin{align*}
|f(x+\xi)|_{p}  &  =|f(p^{ord(x)}u+p^{ord(\xi)}v)|_{p}=|p^{2ord(x)}%
f(u+p^{ord(\xi)-ord(x)}v)|_{p}\\
&  =p^{-2ord(x)}|f(u)+p^{ord(\xi)-ord(x)}A|_{p}%
\end{align*}
for some $A\in\mathbb{Z}_{p}$. Note that $|f(u)|_{p}\in\{1,p^{-1}\}$, then for
$ord(\xi)-ord(x)>1$, $|f(u)+p^{ord(\xi)-ord(x)}A|_{p}=|f(u)|_{p}$ and
\[
|f(x+\xi)|_{p}=p^{-2ord(x)}|f(u)|_{p}=|p^{2ord(x)}f(u)|_{p}=|f(x)|_{p}.
\]

(ii) By using the Taylor expansion of $e^{x}$ and Fubini's Theorem,
$Z_{t}^{(M)}(x)$ can be rewritten as
\[
Z_{t}^{(M)}(x)=\sum_{m=0}^{\infty}\frac{(-1)^{m}}{m!}\kappa^{m}t^{m}%
%TCIMACRO{\dint \limits_{\mathbb{Q}_{p}^{4}}}%
%BeginExpansion
{\displaystyle\int\limits_{\mathbb{Q}_{p}^{4}}}
%EndExpansion
\Omega(p^{-M}||\xi||_{p})\chi(\xi\cdot x)|f^{\circ}(\xi)|_{p}^{m\alpha}%
d^{4}\xi.
\]
By using (\ref{Functional_eq.}) with $m\neq0$, we have
\begin{align*}
I_{M}  &  :=\int_{\mathbb{Q}_{p}^{4}}\Omega(p^{-M}||\xi||_{p})\chi(\xi\cdot
x)|f^{\circ}(\xi)|_{p}^{m\alpha}d^{4}\xi\\
&  =\frac{1-p^{\alpha m}}{1-p^{-\alpha m-2}}%
%TCIMACRO{\dint \limits_{\mathbb{Q}_{p}^{4}}}%
%BeginExpansion
{\displaystyle\int\limits_{\mathbb{Q}_{p}^{4}}}
%EndExpansion
\mathcal{F}_{\xi\rightarrow z}\left[  \Omega(p^{-M}||\xi||_{p})\chi(\xi\cdot
x)\right]  |f(z)|_{p}^{-m\alpha-2}d^{4}z\\
&  =\frac{1-p^{\alpha m}}{1-p^{-\alpha m-2}}p^{4M}%
%TCIMACRO{\dint \limits_{\mathbb{Q}_{p}^{4}}}%
%BeginExpansion
{\displaystyle\int\limits_{\mathbb{Q}_{p}^{4}}}
%EndExpansion
\Omega(p^{M}||x-z||_{p})|f(z)|_{p}^{-m\alpha-2}d^{4}z.
\end{align*}
We now use the fact that $x\neq0$ is fixed, and take $M>1+ord(x)$, changing
variables as $z=x-p^{M}y$ in $I_{M}$, and using (i), we have
\begin{align*}
I_{M}  &  =\frac{1-p^{\alpha m}}{1-p^{-\alpha m-2}}%
%TCIMACRO{\dint \limits_{\mathbb{Q}_{p}^{4}}}%
%BeginExpansion
{\displaystyle\int\limits_{\mathbb{Q}_{p}^{4}}}
%EndExpansion
\Omega(||y||_{p})|f(x-p^{M}y)|_{p}^{-m\alpha-2}d^{4}y\\
&  =\frac{1-p^{\alpha m}}{1-p^{-\alpha m-2}}|f(x)|_{p}^{-m\alpha-2}%
%TCIMACRO{\dint \limits_{\mathbb{Q}_{p}^{4}}}%
%BeginExpansion
{\displaystyle\int\limits_{\mathbb{Q}_{p}^{4}}}
%EndExpansion
\Omega(||y||_{p})d^{4}y=\frac{1-p^{\alpha m}}{1-p^{-\alpha m-2}}%
|f(x)|_{p}^{-m\alpha-2}.
\end{align*}
On the \ other hand, for $m=0$,
\[
\lim_{M\rightarrow\infty}%
%TCIMACRO{\dint \limits_{\mathbb{Q}_{p}^{4}}}%
%BeginExpansion
{\displaystyle\int\limits_{\mathbb{Q}_{p}^{4}}}
%EndExpansion
\Omega(p^{-M}||\xi||_{p})\chi(\xi\cdot x)d^{4}\xi=0.
\]
Therefore%
\[
Z(x,t)=\sum_{m=1}^{\infty}\frac{(-1)^{m}}{m!}\kappa^{m}t^{m}\left(
\frac{1-p^{\alpha m}}{1-p^{-\alpha m-2}}\right)  |f(x)|_{p}^{-m\alpha-2}\text{
for }x\in\mathbb{Q}_{p}^{4}\setminus\{0\}\text{.}%
\]

(iii) It is consequence of (ii) and (i).

(iv) See \cite[Theorem 2]{Z-G3}.
\end{proof}

\begin{proposition}
\label{Prop2} The following assertions hold for any $x\in\mathbb{Q}_{p}^{4}$,
$t>0$:

\noindent(i) there exists a positive constant $C_{1}$ such that $Z(x,t)\leq
C_{1}t(t^{1/2\alpha}+||x||_{p})^{-2\alpha-4}$;\ 
\end{proposition}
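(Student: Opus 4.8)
The plan is to split the estimate into two complementary regimes according to whether $||x||_{p}$ is small or large compared with $t^{1/2\alpha}$, using for the former the elementary bound $0\leq Z(x,t)\leq Z(0,t)$ (the left inequality is Proposition \ref{Prop1}(iv), the right one holds because $\chi$ is unimodular and $e^{-\kappa t|f^{\circ}(\xi)|_{p}^{\alpha}}\geq 0$), and for the latter the power series of Proposition \ref{Prop1}(ii). Throughout fix $\lambda:=(2\kappa p^{\alpha}B^{-\alpha})^{1/2\alpha}$, with $B$ as in (\ref{Zuniga_ineq}).

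\textbf{Near regime }$||x||_{p}\leq\lambda t^{1/2\alpha}$. First I would establish the auxiliary bound $Z(0,t)\leq C_{2}\,t^{-2/\alpha}$. By (\ref{Zuniga_ineq}) and decomposing $\mathbb{Q}_{p}^{4}$ into the spheres $||\xi||_{p}=p^{j}$, $j\in\mathbb{Z}$,
\[
Z(0,t)=\int_{\mathbb{Q}_{p}^{4}}e^{-\kappa t|f^{\circ}(\xi)|_{p}^{\alpha}}d^{4}\xi\leq\int_{\mathbb{Q}_{p}^{4}}e^{-\kappa tB^{\alpha}||\xi||_{p}^{2\alpha}}d^{4}\xi=(1-p^{-4})\sum_{j\in\mathbb{Z}}p^{4j}e^{-\kappa tB^{\alpha}p^{2\alpha j}}.
\]
Choosing $N\in\mathbb{Z}$ with $p^{2\alpha N}\leq(\kappa tB^{\alpha})^{-1}<p^{2\alpha(N+1)}$, so that $p^{4N}\leq(\kappa B^{\alpha})^{-2/\alpha}t^{-2/\alpha}$, I would bound the part $j\leq N$ by $\sum_{j\leq N}p^{4j}=p^{4N}/(1-p^{-4})$ and the part $j>N$ by $p^{4N}\sum_{k\geq1}p^{4k}e^{-p^{-2\alpha}p^{2\alpha k}}$ (using $\kappa tB^{\alpha}p^{2\alpha N}\geq p^{-2\alpha}$), the last series being convergent; this gives $Z(0,t)\leq C_{2}t^{-2/\alpha}$ with $C_{2}$ depending only on $p,\alpha,\kappa,B$. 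Since in this regime $t^{1/2\alpha}+||x||_{p}\leq(1+\lambda)t^{1/2\alpha}$, we get $t^{-2/\alpha}\leq(1+\lambda)^{2\alpha+4}\,t\,(t^{1/2\alpha}+||x||_{p})^{-2\alpha-4}$, so the claim holds here.

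\textbf{Far regime }$||x||_{p}>\lambda t^{1/2\alpha}$ (in particular $x\neq0$). Here I would start from Proposition \ref{Prop1}(ii). Since $\left\vert\frac{1-p^{\alpha m}}{1-p^{-\alpha m-2}}\right\vert\leq\frac{p^{\alpha m}}{1-p^{-\alpha-2}}$ for $m\geq1$, writing $C_{0}:=(1-p^{-\alpha-2})^{-1}$ and dominating the series term by term,
\[
0\leq Z(x,t)\leq C_{0}|f(x)|_{p}^{-2}\Bigl(e^{\kappa tp^{\alpha}|f(x)|_{p}^{-\alpha}}-1\Bigr).
\]
By (\ref{Zuniga_ineq}) and the definition of $\lambda$, the exponent satisfies $u:=\kappa tp^{\alpha}|f(x)|_{p}^{-\alpha}\leq\kappa tp^{\alpha}B^{-\alpha}||x||_{p}^{-2\alpha}<\tfrac{1}{2}$, so $e^{u}-1\leq2u$ applies; combined with $|f(x)|_{p}^{-\alpha-2}\leq B^{-\alpha-2}||x||_{p}^{-2\alpha-4}$ this yields $Z(x,t)\leq2C_{0}\kappa p^{\alpha}B^{-\alpha-2}\,t\,||x||_{p}^{-2\alpha-4}$. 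Finally $t^{1/2\alpha}+||x||_{p}<(1+\lambda^{-1})||x||_{p}$ turns this into the asserted bound, and one takes $C_{1}$ to be the larger of the two constants produced in the two regimes.

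The only genuine work is the auxiliary estimate $Z(0,t)\leq C_{2}t^{-2/\alpha}$ (the sphere-by-sphere bookkeeping) together with choosing the threshold $\lambda$ so that $e^{u}-1\leq2u$ is legitimate in the far regime; the rest is collecting constants. Alternatively, since $Z(x,t)$ is a particular case of the heat kernels of \cite{Z-G3}, one could instead invoke the corresponding estimate proved there.
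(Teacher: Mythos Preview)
Your proof is correct and follows essentially the same approach as the paper: split into two regimes according to the size of $||x||_{p}$ relative to $t^{1/2\alpha}$, use the trivial bound $Z(x,t)\leq\int e^{-\kappa t|f^{\circ}(\xi)|_{p}^{\alpha}}d^{4}\xi\leq Ct^{-2/\alpha}$ in the near regime, use the series of Proposition~\ref{Prop1}(ii) together with (\ref{Zuniga_ineq}) to get $Z(x,t)\leq Ct\,||x||_{p}^{-2\alpha-4}$ in the far regime, and then combine. The only cosmetic differences are that the paper uses the threshold $t||x||_{p}^{-2\alpha}\leq1$ rather than your tuned constant $\lambda$, and that it obtains the bound $Z(0,t)\leq Ct^{-2/\alpha}$ by a change of variables $\xi\mapsto p^{k-1}\xi$ (with $p^{k-1}\leq t^{1/2\alpha}\leq p^{k}$) rather than by your sphere-by-sphere summation; both arguments are equivalent.
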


\noindent(ii) $Z(\cdot,t)\in C^{1}\left(  \left(  0,\infty\right)
,\mathbb{R}\right)  $ and ${\dfrac{\partial Z(x,t)}{\partial t}=-}${$\kappa$%
}$\int_{\mathbb{Q}_{p}^{4}}{|}${$f^{\circ}$}${(\eta)|_{p}^{\alpha}e^{-\kappa
t|f^{\circ}(\eta)|^{\alpha}}\chi(x\cdot\eta)d}^{4}{\eta}$;

\noindent(iii) there exists a positive constant $C_{2}$ such that
\[
{\left\vert \dfrac{\partial Z(x,t)}{\partial t}\right\vert \leq C_{2}\left(
t^{1/2\alpha}+||x||_{p}\right)  ^{-2\alpha-4}}.
\]

\begin{proof}
(i) We first consider the case in which $t||x||_{p}^{-2\alpha}\leq1$, then by
Proposition \ref{Prop1} (ii)-(iv) and by (\ref{Zuniga_ineq}),
\begin{align*}
Z(x,t)  &  \leq|f(x)|_{p}^{-2}\sum_{m=1}^{\infty}\frac{C_{0}^{m}}%
{m!}(t|f(x)|_{p}^{-\alpha})^{m}\leq B^{-2}||x||_{p}^{-4}\sum_{m=1}^{\infty
}\frac{(C_{0}B^{-\alpha})^{m}}{m!}(t||x||_{p}^{-2\alpha})^{m}\\
&  =B^{-2}||x||_{p}^{-4}\left(  e^{C_{0}B^{-\alpha}t||x||_{p}^{-2\alpha}%
}-1\right)  \leq Ct||x||_{p}^{-2\alpha-4}.
\end{align*}
We now consider the case in which $t>0$. Take $k$ to be an integer satisfying
$p^{k-1}\leq t^{\frac{1}{2\alpha}}\leq p^{k}$. Then by Proposition \ref{Prop1}
(iv), and by (\ref{Zuniga_ineq}),
\begin{align*}
|Z(x,t)|  &  =Z(x,t)\leq%
%TCIMACRO{\dint \limits_{\mathbb{Q}_{p}^{4}}}%
%BeginExpansion
{\displaystyle\int\limits_{\mathbb{Q}_{p}^{4}}}
%EndExpansion
e^{-\kappa t|f^{\circ}(\xi)|_{p}^{\alpha}}d^{4}\xi\leq%
%TCIMACRO{\dint \limits_{\mathbb{Q}_{p}^{4}}}%
%BeginExpansion
{\displaystyle\int\limits_{\mathbb{Q}_{p}^{4}}}
%EndExpansion
e^{-C_{0}t|\left\vert \xi\right\vert |_{p}^{2\alpha}}d^{4}\xi\\
&  \leq%
%TCIMACRO{\dint \limits_{\mathbb{Q}_{p}^{4}}}%
%BeginExpansion
{\displaystyle\int\limits_{\mathbb{Q}_{p}^{4}}}
%EndExpansion
e^{-C_{0}\left\Vert p^{1-k}\xi\right\Vert _{p}^{2\alpha}}d^{4}\xi\leq
p^{-4k-k}%
%TCIMACRO{\dint \limits_{\mathbb{Q}_{p}^{4}}}%
%BeginExpansion
{\displaystyle\int\limits_{\mathbb{Q}_{p}^{4}}}
%EndExpansion
e^{-C_{0}||\eta||^{2\alpha}}d^{4}\eta\leq Ct^{-2/\alpha}.
\end{align*}
By combining the above inequalities, see for instance the end of the proof of
Proposition \ref{Prop4}, we get the announced result.

(ii) The formula for $\dfrac{\partial Z(x,t)}{\partial t}$ is obtained by a
straightforward calculation. The continuity of $Z(\cdot,t)$ is obtained from
the formula for $\dfrac{\partial Z(x,t)}{\partial t}$ by using the Dominated
Convergence Theorem. (iii) This part is proved in the same way as (i).
\end{proof}

The first part of Proposition \ref{Prop2} is a particular case of Theorem 1 in
\cite{Z-G3}. We include this proof here due to two reasons: first, it shows a
very deep connection between the functional equation (\ref{Functional_eq.})
and the heat kernels; second, we use this technique for bounding several types
of oscillatory integrals in this article.

\begin{corollary}
\label{cor2}(i) $\int_{\mathbb{Q}_{p}^{4}}Z_{t}(x)d^{4}x=1$ for $t>0$; (ii)
$Z_{t}(x)\in L^{\rho}$ for $t>0$ and for $1\leq\rho\leq\infty$.
\end{corollary}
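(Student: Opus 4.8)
The plan is to prove part (i) by combining the Fubini-type computation underlying Proposition \ref{Prop1}(ii) with an integration of the series term-by-term over $\mathbb{Q}_p^4$, or alternatively, and more directly, to use the approximating kernels $Z_t^{(M)}(x)$. First I would observe that $Z_t^{(M)}(x)=\mathcal{F}_{\xi\to x}\bigl[\Omega(p^{-M}\|\xi\|_p)e^{-\kappa t|f^{\circ}(\xi)|_p^{\alpha}}\bigr](x)$, so by the Fourier inversion formula on $S(\mathbb{Q}_p^4)$ (applied to the Bruhat–Schwartz function $\xi\mapsto\Omega(p^{-M}\|\xi\|_p)e^{-\kappa t|f^{\circ}(\xi)|_p^{\alpha}}$, which is locally constant with compact support) we get
\[
\int_{\mathbb{Q}_p^4}Z_t^{(M)}(x)\,d^4x=\Omega(0)\,e^{-\kappa t|f^{\circ}(0)|_p^{\alpha}}=1,
\]
since evaluating the inverse Fourier transform at $x=0$ integrates out to the value of the original function at $\xi=0$. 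Then I would pass to the limit $M\to\infty$: by Proposition \ref{Prop2}(i) we have the domination $Z_t^{(M)}(x)\le Z_t(x)\le C_1 t(t^{1/2\alpha}+\|x\|_p)^{-2\alpha-4}$ (the first inequality because $Z_t^{(M)}$ is an increasing limit of nonnegative integrands by Proposition \ref{Prop1}(iv)-type positivity, or simply because it is dominated by the same bound), and this majorant is in $L^1(\mathbb{Q}_p^4)$ because for large $\|x\|_p$ the integrand behaves like $\|x\|_p^{-2\alpha-4}$, which is integrable over $\mathbb{Q}_p^4$ precisely when $2\alpha+4>4$, i.e. $\alpha>0$. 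The Dominated Convergence Theorem, together with the pointwise convergence $Z_t^{(M)}(x)\to Z_t(x)$ established in the text, then yields $\int_{\mathbb{Q}_p^4}Z_t(x)\,d^4x=1$.

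For part (ii), I would argue that $Z_t\in L^1$ and $Z_t\in L^{\infty}$, and then interpolate. The $L^1$ membership is immediate from part (i) (it is a nonnegative function with finite integral), or again from the $L^1$ majorant above. For $L^{\infty}$, the bound $Z_t(x)\le C_1 t(t^{1/2\alpha}+\|x\|_p)^{-2\alpha-4}\le C_1 t\cdot t^{-(2\alpha+4)/2\alpha}=C_1 t^{-2/\alpha}$ from Proposition \ref{Prop2}(i) (taking $\|x\|_p=0$ in the denominator gives the largest value) shows $\|Z_t\|_{\infty}<\infty$. Finally, for $1<\rho<\infty$ I would use the standard inequality $\|Z_t\|_{\rho}\le\|Z_t\|_{\infty}^{1-1/\rho}\|Z_t\|_1^{1/\rho}$ (which holds because $\int|Z_t|^{\rho}\le\|Z_t\|_{\infty}^{\rho-1}\int|Z_t|$), concluding that $Z_t\in L^{\rho}$ for all $1\le\rho\le\infty$.

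The main obstacle, such as it is, lies in part (i): justifying the interchange of limit and integral rigorously. One must be careful that the majorant furnished by Proposition \ref{Prop2}(i) genuinely dominates every $Z_t^{(M)}$ uniformly in $M$, not merely the limit $Z_t$; the cleanest route is to note $Z_t^{(M)}(x)\le\int_{\mathbb{Q}_p^4}e^{-\kappa t|f^{\circ}(\xi)|_p^{\alpha}}d^4\xi$ pointwise (trivially, since $|\Omega\cdot\chi|\le1$) — but that only gives a uniform bound in $x$, not an $L^1$ bound. To get the $L^1$ majorant one really wants the decay in $\|x\|_p$, and the subtlety is that the proof of Proposition \ref{Prop2}(i) is stated for $Z(x,t)$ itself; however, the same estimate applies verbatim to $Z_t^{(M)}$ since its Fourier side integrand is pointwise bounded by $e^{-\kappa t|f^{\circ}(\xi)|_p^{\alpha}}$ and the argument only used that bound together with positivity from Proposition \ref{Prop1}(iv). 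Alternatively, one can avoid $Z_t^{(M)}$ altogether and integrate the series in Proposition \ref{Prop1}(ii) directly against $d^4x$, computing $\int_{\mathbb{Q}_p^4}|f(x)|_p^{-\alpha m-2}\,d^4x$ via \eqref{Zuniga_ineq} and a geometric-series computation; but controlling the resulting sum and its convergence is more delicate than the Fourier-inversion argument, so I would favor the approach via $Z_t^{(M)}$.
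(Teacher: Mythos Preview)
Your argument is correct and in spirit matches the paper, which states the result as an immediate corollary of Proposition~\ref{Prop2}(i) without further proof. The only remark is that your detour through the truncated kernels $Z_t^{(M)}$ is unnecessary: once Proposition~\ref{Prop2}(i) shows that the nonnegative function $Z_t$ lies in $L^1(\mathbb{Q}_p^4)$, and since $e^{-\kappa t|f^{\circ}(\cdot)|_p^{\alpha}}\in L^1(\mathbb{Q}_p^4)$ as well, standard $L^1$ Fourier inversion gives $\widehat{Z_t}(\xi)=e^{-\kappa t|f^{\circ}(\xi)|_p^{\alpha}}$ directly, whence $\int_{\mathbb{Q}_p^4}Z_t(x)\,d^4x=\widehat{Z_t}(0)=1$; this is presumably what the paper has in mind, and it sidesteps the domination issue you (correctly) flagged.
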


\section{Some Results on Operators of type $\mathbf{f}(\partial,\alpha)$}

\subsection{The space $\mathfrak{M_{\lambda}}$}

We denote by $\mathfrak{M}_{\lambda}$, $\lambda\geq0$, the $\mathbb{C}$-vector
space of locally constant functions $\varphi(x)$ on $\mathbb{Q}_{p}^{4}$ such
that $|\varphi(x)|\leq C(1+||x||^{\lambda})$, where $C$ is a positive
constant. If the function $\varphi$ depends also on a parameter $t$, we shall
say that $\varphi\in\mathfrak{M}_{\lambda}$ \textit{uniformly with respect to}
$t$, if its constant $C$ and its exponent of local constancy do not depend on
$t$.

\begin{lemma}
\label{lemma3} If $\varphi\in\mathfrak{M}_{2\lambda}$, with $0\leq
\lambda<\alpha$ and $\alpha>0$, then
\[
\lim_{t\rightarrow0^{+}}%
%TCIMACRO{\dint \limits_{\mathbb{Q}_{p}^{4}}}%
%BeginExpansion
{\displaystyle\int\limits_{\mathbb{Q}_{p}^{4}}}
%EndExpansion
Z(x-\xi,t)\varphi(\xi)d^{4}\xi=\varphi(x).
\]

\end{lemma}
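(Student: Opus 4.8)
The statement is a standard approximate-identity result, so the plan is to use the fact that $\int_{\mathbb{Q}_p^4} Z_t(x)\,d^4x = 1$ (Corollary~\ref{cor2}(i)) together with the pointwise Gaussian-type decay bound $Z(x,t)\leq C_1 t(t^{1/2\alpha}+\|x\|_p)^{-2\alpha-4}$ from Proposition~\ref{Prop2}(i). Writing
\[
\int_{\mathbb{Q}_p^4} Z(x-\xi,t)\varphi(\xi)\,d^4\xi - \varphi(x)
= \int_{\mathbb{Q}_p^4} Z(x-\xi,t)\bigl(\varphi(\xi)-\varphi(x)\bigr)\,d^4\xi,
\]
I would split the integral over $\|x-\xi\|_p \leq p^{l}$ and $\|x-\xi\|_p > p^{l}$, where $l = l(\varphi)$ is the exponent of local constancy of $\varphi$ (which, since $\varphi$ is locally constant, can be taken uniform on a neighbourhood of the fixed point $x$, or one simply uses that $\varphi(\xi)-\varphi(x)=0$ for $\|\xi-x\|_p$ small enough depending on $x$). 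On the near region the integrand vanishes identically by local constancy, so that contribution is exactly $0$ for all $t$. Hence only the far region $\|x-\xi\|_p > p^{l}$ needs to be estimated, and one must show it tends to $0$ as $t\to 0^+$.

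For the far region I would bound $|\varphi(\xi)-\varphi(x)| \leq |\varphi(\xi)| + |\varphi(x)| \leq C(1+\|\xi\|_p^{2\lambda}) + C(1+\|x\|_p^{2\lambda})$; since $x$ is fixed and, on this region, $\|\xi\|_p \leq \max(\|x-\xi\|_p, \|x\|_p)$ with $\|x-\xi\|_p$ large, we have $\|\xi\|_p \leq C'\|x-\xi\|_p$ for a constant depending on $x$. Thus the far-region integral is dominated by
\[
C_1 t \int_{\|x-\xi\|_p > p^{l}} \bigl(t^{1/2\alpha}+\|x-\xi\|_p\bigr)^{-2\alpha-4}\bigl(1+\|x-\xi\|_p^{2\lambda}\bigr)\,d^4\xi
\leq C_1 t \int_{\|u\|_p > p^{l}} \|u\|_p^{-2\alpha-4}\bigl(1+\|u\|_p^{2\lambda}\bigr)\,d^4u,
\]
after substituting $u = \xi - x$ and using $t^{1/2\alpha}+\|u\|_p \geq \|u\|_p$. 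The remaining $u$-integral is a sum of terms $\int_{\|u\|_p>p^l}\|u\|_p^{-2\alpha-4+2\lambda}\,d^4u$ (and the analogous term with $\lambda=0$), which converge precisely because $-2\alpha-4+2\lambda < -4$, i.e. $\lambda < \alpha$ — this is exactly where the hypothesis $0\le\lambda<\alpha$ is used. Each such integral is a finite constant independent of $t$ (computed as a geometric series over the spheres $\|u\|_p = p^{j}$, $j > l$), so the far-region contribution is bounded by $C(x)\,t \to 0$ as $t\to 0^+$.

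The main (and only real) obstacle is the bookkeeping in the far region: making sure the polynomial growth factor $1+\|\xi\|_p^{2\lambda}$ is correctly absorbed into $\|x-\xi\|_p$-powers with an $x$-dependent constant, and checking the convergence exponent. There is no subtlety in the near region since local constancy kills it exactly. One could alternatively avoid splitting by hand and instead invoke a general dominated-convergence argument, but the split is cleaner and makes the role of $\lambda<\alpha$ transparent. I would therefore present the argument as: (1) subtract $\varphi(x)$ using Corollary~\ref{cor2}(i); (2) split at the radius of local constancy and note the inner part is identically zero; (3) estimate the outer part via Proposition~\ref{Prop2}(i) and the $\mathfrak{M}_{2\lambda}$ bound; (4) conclude the outer integral is $O(t)$ and let $t\to0^+$.
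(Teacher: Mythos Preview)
Your proposal is correct and follows essentially the same route as the paper: subtract $\varphi(x)$ via Corollary~\ref{cor2}(i), kill the near part by local constancy, and on the far part use Proposition~\ref{Prop2}(i) together with the $\mathfrak{M}_{2\lambda}$ growth bound to show the remaining integral is $O(t)$, the hypothesis $\lambda<\alpha$ being exactly what makes $\int_{\|u\|_p>p^l}\|u\|_p^{-2\alpha-4+2\lambda}\,d^4u$ converge. The paper's write-up is slightly terser (it does not spell out the ultrametric step absorbing $\|\xi\|_p^{2\lambda}$ into $\|x-\xi\|_p^{2\lambda}$ with an $x$-dependent constant), but the argument is the same.
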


\begin{proof}
By Corollary \ref{cor2} (i) and Proposition \ref{Prop2} (i), and the fact that
$\varphi$\ is locally constant,
\begin{multline*}
I:=\left\vert
%TCIMACRO{\dint \limits_{\mathbb{Q}_{p}^{4}}}%
%BeginExpansion
{\displaystyle\int\limits_{\mathbb{Q}_{p}^{4}}}
%EndExpansion
Z(x-\xi,t)\varphi(\xi)d^{4}\xi-\varphi(x)\right\vert =\left\vert
%TCIMACRO{\dint \limits_{\mathbb{Q}_{p}^{4}}}%
%BeginExpansion
{\displaystyle\int\limits_{\mathbb{Q}_{p}^{4}}}
%EndExpansion
Z(x-\xi,t)[\varphi(\xi)-\varphi(x)]d^{4}\xi\right\vert \\
=\left\vert \text{ }%
%TCIMACRO{\dint \limits_{\left\Vert x-\xi\right\Vert _{p}\geq p^{L}}}%
%BeginExpansion
{\displaystyle\int\limits_{\left\Vert x-\xi\right\Vert _{p}\geq p^{L}}}
%EndExpansion
Z(x-\xi,t)[\varphi(\xi)-\varphi(x)]d^{4}\xi\right\vert \\
\leq C_{1}t%
%TCIMACRO{\dint \limits_{\left\Vert x-\xi\right\Vert _{p}\geq p^{L}}}%
%BeginExpansion
{\displaystyle\int\limits_{\left\Vert x-\xi\right\Vert _{p}\geq p^{L}}}
%EndExpansion
(t^{1/2\alpha}+||x-\xi||_{p})^{-2\alpha-4}\left\vert \varphi(\xi
)-\varphi(x)\right\vert d^{4}\xi\\
=C_{1}t%
%TCIMACRO{\dint \limits_{\left\Vert z\right\Vert _{p}\geq p^{L}}}%
%BeginExpansion
{\displaystyle\int\limits_{\left\Vert z\right\Vert _{p}\geq p^{L}}}
%EndExpansion
(t^{1/2\alpha}+||z||_{p})^{-2\alpha-4}\left\vert \varphi(x-z)-\varphi
(x)\right\vert d^{4}z.
\end{multline*}
By applying the triangle inequality in the last integral and noticing that%
\[
\left\vert \varphi(x)\right\vert
%TCIMACRO{\dint \limits_{\left\Vert z\right\Vert _{p}\geq p^{L}}}%
%BeginExpansion
{\displaystyle\int\limits_{\left\Vert z\right\Vert _{p}\geq p^{L}}}
%EndExpansion
(t^{1/2\alpha}+||z||_{p})^{-2\alpha-4}d^{4}\xi\leq\left\vert \varphi
(x)\right\vert
%TCIMACRO{\dint \limits_{\left\Vert z\right\Vert _{p}\geq p^{L}}}%
%BeginExpansion
{\displaystyle\int\limits_{\left\Vert z\right\Vert _{p}\geq p^{L}}}
%EndExpansion
||z||_{p}^{-2\alpha-4}d^{4}\xi\leq C_{0}\left\vert \varphi(x)\right\vert ,
\]
and
\[%
%TCIMACRO{\dint \limits_{\left\Vert z\right\Vert _{p}\geq p^{L}}}%
%BeginExpansion
{\displaystyle\int\limits_{\left\Vert z\right\Vert _{p}\geq p^{L}}}
%EndExpansion
(t^{1/2\alpha}+||z||_{p})^{-2\alpha-4}||z||_{p}^{2\lambda}d^{4}\xi\leq%
%TCIMACRO{\dint \limits_{\left\Vert z\right\Vert _{p}\geq p^{L}}}%
%BeginExpansion
{\displaystyle\int\limits_{\left\Vert z\right\Vert _{p}\geq p^{L}}}
%EndExpansion
||z||_{p}{}^{-2\alpha+2\lambda-4}d^{4}z<\infty,
\]
we have
\[
\lim_{t\rightarrow0^{+}}I\leq\left(  C_{1}+C_{2}\left\vert \varphi
(x)\right\vert \right)  \lim_{t\rightarrow0^{+}}t=0.
\]

\end{proof}

\subsection{The Operator $\mathbf{f}(\partial,\alpha)$}

Given $\alpha>0$, we define the pseudodifferential operator with symbol
$\left\vert f^{\circ}\left(  \xi\right)  \right\vert _{p}^{\alpha}$ by%
\[%
\begin{array}
[c]{lll}%
\mathbf{S}\left(  \mathbb{Q}_{p}^{4}\right)  & \rightarrow & C\left(
\mathbb{Q}_{p}^{4}\right)  \cap L^{2}\left(  \mathbb{Q}_{p}^{4}\right) \\
&  & \\
\varphi & \rightarrow & \left(  \boldsymbol{f}\left(  \partial,\alpha\right)
\varphi\right)  \left(  x\right)  =\mathcal{F}_{\xi\rightarrow x}^{-1}\left(
\left\vert f^{\circ}\left(  \xi\right)  \right\vert _{p}^{\alpha}%
\mathcal{F}_{x\rightarrow\xi}\varphi\right)  .
\end{array}
\]
This operator is well-defined since $\left\vert f^{\circ}\left(  \xi\right)
\right\vert _{p}^{\alpha}\mathcal{F}_{x\rightarrow\xi}\varphi\in L^{1}\left(
\mathbb{Q}_{p}^{4}\right)  \cap L^{2}\left(  \mathbb{Q}_{p}^{4}\right)  $. By
\cite[Proposition 3.4 (iv)]{C-Zu1},
\begin{equation}
\left(  \boldsymbol{f}\left(  \partial,\alpha\right)  \varphi\right)  \left(
x\right)  =\dfrac{1-p^{\alpha}}{1-p^{-\alpha-2}}%
%TCIMACRO{\dint \nolimits_{\mathbb{Q}_{p}^{4}}}%
%BeginExpansion
{\displaystyle\int\nolimits_{\mathbb{Q}_{p}^{4}}}
%EndExpansion
\frac{\varphi(x-y)-\varphi(x)}{|f(y)|_{p}^{\alpha+2}}d^{4}y, \label{Ext_oper}%
\end{equation}
for $\varphi\in\mathbf{S}\left(  \mathbb{Q}_{p}^{4}\right)  $. The operator
$\boldsymbol{f}\left(  \partial,\alpha\right)  $\ can be extended to any
locally constant functions $\varphi\left(  x\right)  $ satisfying%
\begin{equation}%
%TCIMACRO{\dint \limits_{\left\Vert x\right\Vert _{p}\geq p^{m}}}%
%BeginExpansion
{\displaystyle\int\limits_{\left\Vert x\right\Vert _{p}\geq p^{m}}}
%EndExpansion
\frac{\left\vert \varphi(x)\right\vert }{|f(x)|_{p}^{\alpha+2}}d^{4}%
x<\infty\text{ for some }m\in\mathbb{Z}\text{,} \label{condicion1}%
\end{equation}
c.f. \cite[Lemma 4.1]{C-Zu1}.

Note that
\[
{Z_{t}^{(M)}(x)=}%
%TCIMACRO{\dint \limits_{||\eta||_{p}\leq p^{M}}}%
%BeginExpansion
{\displaystyle\int\limits_{||\eta||_{p}\leq p^{M}}}
%EndExpansion
{\chi(x\cdot\eta)e^{-\kappa t|f^{\circ}(\eta)|_{p}^{\alpha}}d}^{4}{\eta
}\text{, with }M\in\mathbb{N}%
\]
is a locally constant and bounded function, c.f. Proposition \ref{Prop2}.
Furthermore, by Proposition \ref{Prop2} and (\ref{Zuniga_ineq}), ${Z_{t}%
^{(M)}(x)}$ satisfies condition (\ref{condicion1}), for $t>0$.

\begin{lemma}
\label{lemma_1}%
\begin{equation}
(\mathbf{f}(\partial,\gamma)Z_{t}^{(M)})(x)=%
%TCIMACRO{\dint \limits_{||\eta||_{p}\leq p^{M}}}%
%BeginExpansion
{\displaystyle\int\limits_{||\eta||_{p}\leq p^{M}}}
%EndExpansion
\chi(x\cdot\eta)|f^{\circ}(\eta)|_{p}^{\gamma}e^{-at|f^{\circ}(\eta
)|_{p}^{\alpha}}d^{4}\eta\text{,} \label{f_M}%
\end{equation}
for $M\in\mathbb{N}$ and for $t>0$.
\end{lemma}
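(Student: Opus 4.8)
The plan is to apply the integral representation (\ref{Ext_oper}) of the operator $\mathbf{f}(\partial,\gamma)$ to the function $Z_t^{(M)}$, which is legitimate because $Z_t^{(M)}$ is locally constant, bounded, and satisfies condition (\ref{condicion1}) as noted just before the lemma. First I would write
\[
(\mathbf{f}(\partial,\gamma)Z_t^{(M)})(x)=\frac{1-p^{\gamma}}{1-p^{-\gamma-2}}\int\limits_{\mathbb{Q}_p^4}\frac{Z_t^{(M)}(x-y)-Z_t^{(M)}(x)}{|f(y)|_p^{\gamma+2}}\,d^4y,
\]
and then substitute the defining integral $Z_t^{(M)}(x)=\int_{||\eta||_p\leq p^M}\chi(x\cdot\eta)e^{-\kappa t|f^{\circ}(\eta)|_p^{\alpha}}\,d^4\eta$ into the numerator. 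This gives $Z_t^{(M)}(x-y)-Z_t^{(M)}(x)=\int_{||\eta||_p\leq p^M}[\chi((x-y)\cdot\eta)-\chi(x\cdot\eta)]e^{-\kappa t|f^{\circ}(\eta)|_p^{\alpha}}\,d^4\eta=\int_{||\eta||_p\leq p^M}\chi(x\cdot\eta)[\chi(-y\cdot\eta)-1]e^{-\kappa t|f^{\circ}(\eta)|_p^{\alpha}}\,d^4\eta$.

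Next I would swap the order of integration (the $\eta$-integral and the $y$-integral), which is the step requiring justification: on the region $||\eta||_p\leq p^M$ the exponential factor is bounded by $1$, and the inner $y$-integral $\int\frac{\chi(-y\cdot\eta)-1}{|f(y)|_p^{\gamma+2}}\,d^4y$ must be shown to be absolutely integrable against $d^4\eta$ over the ball. Near $y=0$ one uses $|\chi(-y\cdot\eta)-1|\leq C||y||_p\,||\eta||_p$ together with (\ref{Zuniga_ineq}) so that $|f(y)|_p^{-\gamma-2}\leq B^{-\gamma-2}||y||_p^{-2\gamma-4}$, making the singularity integrable for small enough $\gamma$ or after the standard splitting argument; for large $||y||_p$ the bound $|\chi(-y\cdot\eta)-1|\leq 2$ and $|f(y)|_p^{-\gamma-2}\leq B^{-\gamma-2}||y||_p^{-2\gamma-4}$ give convergence. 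Since $M$ is finite the $\eta$-domain has finite measure, so Fubini applies.

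After interchanging, the result is
\[
(\mathbf{f}(\partial,\gamma)Z_t^{(M)})(x)=\int\limits_{||\eta||_p\leq p^M}\chi(x\cdot\eta)e^{-\kappa t|f^{\circ}(\eta)|_p^{\alpha}}\left[\frac{1-p^{\gamma}}{1-p^{-\gamma-2}}\int\limits_{\mathbb{Q}_p^4}\frac{\chi(-y\cdot\eta)-1}{|f(y)|_p^{\gamma+2}}\,d^4y\right]d^4\eta,
\]
and the bracketed inner integral is exactly $|f^{\circ}(\eta)|_p^{\gamma}$ by Lemma \ref{lemma_0} (applied with $\alpha$ replaced by $\gamma$, noting $\chi(-y\cdot\eta)-1$ matches $\chi(y\cdot x)-1$ up to sign symmetry of the quadratic form and the fact that $|f(y)|_p$ is even). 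Substituting this identity yields the claimed formula (\ref{f_M}). The main obstacle is the Fubini justification for swapping the $y$- and $\eta$-integrals; once that is in place, the rest is a direct application of (\ref{Ext_oper}) and Lemma \ref{lemma_0}. A minor point to check is that the appeal to Lemma \ref{lemma_0} requires $\gamma>0$, which is part of the hypothesis on $\mathbf{f}(\partial,\gamma)$.
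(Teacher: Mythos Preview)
Your overall strategy---apply (\ref{Ext_oper}), insert the definition of $Z_t^{(M)}$, interchange the $y$- and $\eta$-integrals, and invoke Lemma \ref{lemma_0}---is exactly the route the paper takes. The one place where your argument is weaker than the paper's is the Fubini justification. Your Lipschitz-type bound $|\chi(-y\cdot\eta)-1|\leq C\,||y||_p\,||\eta||_p$ combined with $|f(y)|_p^{-\gamma-2}\leq B^{-\gamma-2}||y||_p^{-2\gamma-4}$ yields an integrand of order $||y||_p^{-2\gamma-3}$ near $y=0$, which is integrable in $\mathbb{Q}_p^4$ only for $\gamma<1/2$; your phrase ``or after the standard splitting argument'' does not actually close this gap for general $\gamma>0$.

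The paper handles this more cleanly by exploiting a genuinely $p$-adic fact: since $\chi$ is trivial on $\mathbb{Z}_p$, for $||\eta||_p\leq p^{M}$ and $||y||_p\leq p^{-M}$ one has $|y\cdot\eta|_p\leq 1$ and hence $\chi(-y\cdot\eta)-1=0$. Equivalently, $Z_t^{(M)}$ has exponent of local constancy at least $-M$, so the integrand in (\ref{Ext_oper}) vanishes on $||y||_p\leq p^{-M}$ and the $y$-integral is really over $\{||y||_p>p^{-M}\}$, where there is no singularity at all. With this observation Fubini is immediate (bounded integrand on a set of finite measure in $\eta$, and $||y||_p^{-2\gamma-4}$ integrable at infinity), and the inner $y$-integral can then be extended back to all of $\mathbb{Q}_p^4$ before applying Lemma \ref{lemma_0}. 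Replace your Lipschitz estimate by this vanishing observation and your proof matches the paper's.
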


\begin{proof}
Note that if $||\xi||_{p}\leq p^{-M}$, then $Z_{t}^{(M)}(x-\xi)=Z_{t}%
^{(M)}(x)$. In addition, since ${Z_{t}^{(M)}(x)}$ satisfies condition
(\ref{condicion1}), we can use formula (\ref{Ext_oper}) to compute
$(\mathbf{f}(\partial,\gamma)Z_{t}^{(M)})(x)$ as follows:%
\begin{multline*}
(\mathbf{f}(\partial,\gamma)Z_{t}^{(M)})(x)=\frac{1-p^{\gamma}}{1-p^{-\gamma
-2}}%
%TCIMACRO{\dint \limits_{\mathbb{Q}_{p}^{4}}}%
%BeginExpansion
{\displaystyle\int\limits_{\mathbb{Q}_{p}^{4}}}
%EndExpansion
|f(\xi)|_{p}^{-\gamma-2}\left[  Z_{t}^{(M)}(x-\xi)-Z_{t}^{(M)}(x)\right]
d^{4}\xi\\
=\frac{1-p^{\gamma}}{1-p^{-\gamma-2}}%
%TCIMACRO{\dint \limits_{||\xi||_{p}>p^{-M}}}%
%BeginExpansion
{\displaystyle\int\limits_{||\xi||_{p}>p^{-M}}}
%EndExpansion
|f(\xi)|_{p}^{-\gamma-2}\left[  Z_{t}^{(M)}(x-\xi)-Z_{t}^{(M)}(x)\right]
d^{4}\xi\\
+\frac{1-p^{\gamma}}{1-p^{-\gamma-2}}%
%TCIMACRO{\dint \limits_{||\xi||_{p}\leq p^{-M}}}%
%BeginExpansion
{\displaystyle\int\limits_{||\xi||_{p}\leq p^{-M}}}
%EndExpansion
|f(\xi)|_{p}^{-\gamma-2}\left[  Z_{t}^{(M)}(x-\xi)-Z_{t}^{(M)}(x)\right]
d^{4}\xi\\
=\frac{1-p^{\gamma}}{1-p^{-\gamma-2}}%
%TCIMACRO{\dint \limits_{||\xi||_{p}>p^{-M}}}%
%BeginExpansion
{\displaystyle\int\limits_{||\xi||_{p}>p^{-M}}}
%EndExpansion
|f(\xi)|_{p}^{-\gamma-2}\left[  Z_{t}^{(M)}(x-\xi)-Z_{t}^{(M)}(x)\right]
d^{4}\xi\\
=\frac{1-p^{\gamma}}{1-p^{-\gamma-2}}%
%TCIMACRO{\dint \limits_{||\xi||_{p}>p^{-M}}}%
%BeginExpansion
{\displaystyle\int\limits_{||\xi||_{p}>p^{-M}}}
%EndExpansion
|f(\xi)|_{p}^{-\gamma-2}%
%TCIMACRO{\dint \limits_{||\eta||_{p}\leq p^{M}}}%
%BeginExpansion
{\displaystyle\int\limits_{||\eta||_{p}\leq p^{M}}}
%EndExpansion
e^{-at|f^{\circ}(\eta)|_{p}^{\alpha}}\chi(x\cdot\eta)[\chi(\xi\cdot
\eta)-1]d^{4}\eta d^{4}\xi\\
=\frac{1-p^{\gamma}}{1-p^{-\gamma-2}}%
%TCIMACRO{\dint \limits_{||\eta||_{p}\leq p^{M}}}%
%BeginExpansion
{\displaystyle\int\limits_{||\eta||_{p}\leq p^{M}}}
%EndExpansion
e^{-at|f^{\circ}(\eta)|_{p}^{\alpha}}\chi(x\cdot\eta)%
%TCIMACRO{\dint \limits_{||\xi||_{p}>p^{-M}}}%
%BeginExpansion
{\displaystyle\int\limits_{||\xi||_{p}>p^{-M}}}
%EndExpansion
|f(\xi)|_{p}^{-\gamma-2}[\chi(\xi\cdot\eta)-1]d^{4}\xi d^{4}\eta\\
=\frac{1-p^{\gamma}}{1-p^{-\gamma-2}}%
%TCIMACRO{\dint \limits_{||\eta||_{p}\leq p^{M}}}%
%BeginExpansion
{\displaystyle\int\limits_{||\eta||_{p}\leq p^{M}}}
%EndExpansion
e^{-at|f^{\circ}(\eta)|_{p}^{\alpha}}\chi(x\cdot\eta)\left\{
%TCIMACRO{\dint \limits_{\mathbb{Q}_{p}^{4}}}%
%BeginExpansion
{\displaystyle\int\limits_{\mathbb{Q}_{p}^{4}}}
%EndExpansion
|f(\xi)|_{p}^{-\gamma-2}[\chi(\xi\cdot\eta)-1]d^{4}\xi\right\}  d^{4}\eta\\
=%
%TCIMACRO{\dint \limits_{||\eta||_{p}\leq p^{M}}}%
%BeginExpansion
{\displaystyle\int\limits_{||\eta||_{p}\leq p^{M}}}
%EndExpansion
|f^{\circ}(\eta)|_{p}^{\gamma}e^{-at|f^{\circ}(\eta)|_{p}^{\alpha}}\chi
(x\cdot\eta)d^{4}\eta\text{, c.f. Lemma \ref{lemma_0}.}%
\end{multline*}

\end{proof}

By Proposition \ref{Prop1} (iii) and Proposition \ref{Prop2} (i),
$(\mathbf{f}(\partial,\gamma)Z_{t})(x)$ is well-defined for $x\neq0$ and for
$0<\gamma\leq\alpha$.

\begin{proposition}
\label{Prop3}%
\begin{equation}
(\mathbf{f}(\partial,\gamma)Z_{t})(x)=%
%TCIMACRO{\dint \limits_{\mathbb{Q}_{p}^{4}}}%
%BeginExpansion
{\displaystyle\int\limits_{\mathbb{Q}_{p}^{4}}}
%EndExpansion
|f^{\circ}(\eta)|_{p}^{\gamma}e^{-\kappa t|f^{\circ}(\eta)|^{\alpha}}%
\chi(x\cdot\eta)d^{4}\eta,\text{\ for }0<\gamma\leq\alpha\text{, }t>0.
\label{formula_op_f}%
\end{equation}

\end{proposition}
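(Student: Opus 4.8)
The plan is to obtain \eqref{formula_op_f} by passing to the limit $M\to\infty$ in the identity \eqref{f_M} of Lemma \ref{lemma_1}, applied with $\gamma$ in place of the generic exponent. On the right-hand side of \eqref{f_M} the integrand is dominated, uniformly in $M$, by $|f^{\circ}(\eta)|_{p}^{\gamma}e^{-\kappa t|f^{\circ}(\eta)|_{p}^{\alpha}}$, which by \eqref{Zuniga_ineq} is bounded above by $A^{\gamma}\|\eta\|_{p}^{2\gamma}e^{-\kappa t B^{\alpha}\|\eta\|_{p}^{2\alpha}}$; since $0<\gamma\le\alpha$ this is an integrable function on $\mathbb{Q}_{p}^{4}$ (the Gaussian-type decay beats the polynomial growth). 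Hence the Dominated Convergence Theorem gives
\[
\lim_{M\to\infty}\int\limits_{\|\eta\|_{p}\le p^{M}}\chi(x\cdot\eta)|f^{\circ}(\eta)|_{p}^{\gamma}e^{-\kappa t|f^{\circ}(\eta)|_{p}^{\alpha}}d^{4}\eta
=\int\limits_{\mathbb{Q}_{p}^{4}}|f^{\circ}(\eta)|_{p}^{\gamma}e^{-\kappa t|f^{\circ}(\eta)|_{p}^{\alpha}}\chi(x\cdot\eta)d^{4}\eta,
\]
which is exactly the right-hand side of \eqref{formula_op_f}. So the whole content of the proof is to show that the left-hand sides match in the limit, i.e. that $\lim_{M\to\infty}(\mathbf{f}(\partial,\gamma)Z_{t}^{(M)})(x)=(\mathbf{f}(\partial,\gamma)Z_{t})(x)$ for fixed $x\neq 0$ and $t>0$.

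For that I would use the integral representation \eqref{Ext_oper} for the operator, valid here because each $Z_{t}^{(M)}$, as well as $Z_{t}$, is locally constant and satisfies condition \eqref{condicion1} (the latter for $Z_t$ follows from Proposition \ref{Prop2}(i) together with \eqref{Zuniga_ineq}, as already noted in the text). Thus
\[
(\mathbf{f}(\partial,\gamma)Z_{t}^{(M)})(x)-(\mathbf{f}(\partial,\gamma)Z_{t})(x)
=\frac{1-p^{\gamma}}{1-p^{-\gamma-2}}\int\limits_{\mathbb{Q}_{p}^{4}}
\frac{\bigl[Z_{t}^{(M)}(x-y)-Z_{t}(x-y)\bigr]-\bigl[Z_{t}^{(M)}(x)-Z_{t}(x)\bigr]}{|f(y)|_{p}^{\gamma+2}}\,d^{4}y,
\]
and I want to send this to $0$. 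Near $y=0$ (say $\|y\|_p\le p^{-1}\|x\|_p$) the numerator vanishes identically, because by Proposition \ref{Prop1}(i)–(iii) both $Z_t^{(M)}$ and $Z_t$ are locally constant around $x\neq 0$ with the same radius of constancy $p^{-1}\|x\|_p$, so no singularity of $|f(y)|_p^{-\gamma-2}$ is seen there; this is the reason $(\mathbf{f}(\partial,\gamma)Z_t)(x)$ is well-defined for $x\neq 0$, as remarked before the Proposition. On the region $\|y\|_p> p^{-1}\|x\|_p$ the kernel $|f(y)|_p^{-\gamma-2}$ is bounded by a constant times $\|y\|_p^{-2\gamma-4}$ via \eqref{Zuniga_ineq}, and I bound $|Z_t^{(M)}(x-y)-Z_t(x-y)|$ and $|Z_t^{(M)}(x)-Z_t(x)|$ by pointwise bounds that are integrable against this kernel and tend to $0$; one clean way is to write $Z_t^{(M)}(z)-Z_t(z)=-\int_{\|\eta\|_p>p^M}\chi(z\cdot\eta)e^{-\kappa t|f^{\circ}(\eta)|_p^{\alpha}}d^4\eta$, whose absolute value is at most $\int_{\|\eta\|_p>p^M}e^{-\kappa t B^{\alpha}\|\eta\|_p^{2\alpha}}d^4\eta=:\varepsilon_M(t)\to 0$, a bound independent of $z$. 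Then the integrand above is dominated by $C\,\varepsilon_M(t)\,\|y\|_p^{-2\gamma-4}\mathbf{1}_{\{\|y\|_p>p^{-1}\|x\|_p\}}(y)$ plus the analogous constant term, both integrable and both $\to 0$; hence the whole difference is $O(\varepsilon_M(t))\to 0$, and the claim follows by combining with Lemma \ref{lemma_1} and the Dominated Convergence argument of the previous paragraph.

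The main obstacle, and the step deserving the most care, is the interchange of $\mathbf{f}(\partial,\gamma)$ with the limit $M\to\infty$: one must be sure that the $y$-integral defining $(\mathbf{f}(\partial,\gamma)Z_t)(x)$ really converges absolutely at fixed $x\neq 0$ (so that the splitting into $\|y\|_p\le p^{-1}\|x\|_p$ and its complement is legitimate and the near-zero part is genuinely zero, not just conditionally convergent), and that the tail estimate $\varepsilon_M(t)$ is uniform enough to dominate the contribution from the non-compact region $\|y\|_p>p^{-1}\|x\|_p$. Once the uniform-in-$M$, $\to 0$ bound $C\varepsilon_M(t)\|y\|_p^{-2\gamma-4}$ on that region is in hand — using $0<\gamma\le\alpha$ to keep $-2\gamma-4<-4$ so the kernel is integrable at infinity — everything else is routine. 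I would also remark that for $x=0$ the formula is to be interpreted via the representation already established for $Z(x,t)$ in Proposition \ref{Prop1}(ii), or simply note that $(\mathbf{f}(\partial,\gamma)Z_t)(x)$ need only be considered for $x\neq 0$ as stated.
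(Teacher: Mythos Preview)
Your proof is correct and follows essentially the same route as the paper's: pass to the limit $M\to\infty$ in Lemma~\ref{lemma_1}, use Dominated Convergence on the Fourier side, and on the operator side exploit the local constancy of $Z_t$ (Proposition~\ref{Prop1}(iii)) to restrict the singular integral to $\|y\|_p>p^{-1}\|x\|_p$, where the passage to the limit is harmless. The paper applies Dominated Convergence directly to that restricted integral, while you instead bound the difference by an explicit tail $\varepsilon_M(t)$; these are interchangeable. One small point worth making explicit: the claim that $Z_t^{(M)}$ shares the radius of constancy $p^{-1}\|x\|_p$ around $x$ is only guaranteed once $M>1+\operatorname{ord}(x)$ (this is implicit in the proof of Proposition~\ref{Prop1}(ii), where in fact $Z_t^{(M)}(x)=Z_t(x)$ for such $M$), so the splitting is valid for all large $M$, which suffices.
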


\begin{proof}
By (\ref{Zuniga_ineq}), $|f^{\circ}(\cdot)|_{p}^{\gamma}e^{-\kappa t|f^{\circ
}(\cdot)|_{p}^{\alpha}}\in L^{1}\left(  \mathbb{Q}_{p}^{4}\right)  $ for
$t>0$, then from (\ref{f_M}), by the Dominated Convergence Theorem, we obtain%
\begin{equation}
\lim_{M\rightarrow\infty}(\mathbf{f}(\partial,\gamma)Z^{(M)})(x,t)=%
%TCIMACRO{\dint \limits_{\mathbb{Q}_{p}^{4}}}%
%BeginExpansion
{\displaystyle\int\limits_{\mathbb{Q}_{p}^{4}}}
%EndExpansion
\chi(x\cdot\eta)|f^{\circ}(\eta)|_{p}^{\gamma}e^{-\kappa t|f^{\circ}%
(\eta)|_{p}^{\alpha}}d^{4}\eta\text{, for }t>0. \label{formula}%
\end{equation}

On the \ other hand, fixing an $x\neq0$, by Proposition \ref{Prop1} (i),
\[
(\mathbf{f}(\partial,\gamma)Z_{t}^{(M)})(x)=\frac{1-p^{\gamma}}{1-p^{-\gamma
-2}}%
%TCIMACRO{\dint \limits_{||\xi||_{p}>p^{-1}||x||_{p}}}%
%BeginExpansion
{\displaystyle\int\limits_{||\xi||_{p}>p^{-1}||x||_{p}}}
%EndExpansion
|f(\xi)|_{p}^{-\gamma-2}\left[  Z_{t}^{(M)}(x-\xi)-Z_{t}^{(M)}(x)\right]
d^{4}\xi.
\]
Finally, by the Dominated Convergence Theorem and (\ref{formula}), we have%
\[
\lim_{M\rightarrow\infty}(\mathbf{f}(\partial,\gamma)Z_{t}^{(M)}%
)(x)=(\mathbf{f}(\partial,\gamma)Z)(x,t)=%
%TCIMACRO{\dint \limits_{\mathbb{Q}_{p}^{4}}}%
%BeginExpansion
{\displaystyle\int\limits_{\mathbb{Q}_{p}^{4}}}
%EndExpansion
\chi(x\cdot\eta)|f^{\circ}(\eta)|_{p}^{\gamma}e^{-\kappa t|f^{\circ}%
(\eta)|_{p}^{\alpha}}d^{4}\eta.
\]

Finally, we note that the right-hand side of (\ref{formula_op_f}) is
continuous at $x=0$.
\end{proof}

\begin{corollary}
\label{cor3}$\frac{\partial Z\left(  x,t\right)  }{\partial t}=-\kappa
(\mathbf{f}(\partial,\alpha)Z)(x,t)$ for $t>0$.
\end{corollary}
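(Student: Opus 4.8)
The plan is to simply read off the identity from the two integral representations already established. The substantive work has been done in Propositions \ref{Prop2} and \ref{Prop3}; the corollary is the observation that these two formulas coincide up to the factor $-\kappa$.

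\emph{Step 1.} Apply Proposition \ref{Prop3} with $\gamma=\alpha$ (which is allowed, since the range there is $0<\gamma\leq\alpha$). This yields, for $t>0$ and $x\neq 0$,
\[
(\mathbf{f}(\partial,\alpha)Z)(x,t)=\int_{\mathbb{Q}_{p}^{4}}|f^{\circ}(\eta)|_{p}^{\alpha}e^{-\kappa t|f^{\circ}(\eta)|^{\alpha}}\chi(x\cdot\eta)\,d^{4}\eta .
\]
As noted at the end of the proof of Proposition \ref{Prop3}, the integral on the right is continuous in $x$ (it converges absolutely by \eqref{Zuniga_ineq} for $t>0$), so the formula extends to $x=0$ by continuity.

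\emph{Step 2.} By Proposition \ref{Prop2}(ii), for every $x\in\mathbb{Q}_{p}^{4}$ and $t>0$,
\[
\frac{\partial Z(x,t)}{\partial t}=-\kappa\int_{\mathbb{Q}_{p}^{4}}|f^{\circ}(\eta)|_{p}^{\alpha}e^{-\kappa t|f^{\circ}(\eta)|^{\alpha}}\chi(x\cdot\eta)\,d^{4}\eta .
\]

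\emph{Step 3.} Comparing the two displayed identities gives
\[
\frac{\partial Z(x,t)}{\partial t}=-\kappa\,(\mathbf{f}(\partial,\alpha)Z)(x,t)
\]
for all $t>0$, which is the assertion.

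\textbf{Main obstacle.} There is essentially no obstacle left: the only thing to be careful about is the matching of domains, namely that the operator side $(\mathbf{f}(\partial,\alpha)Z)(x,t)$ is a priori defined only for $x\neq0$, whereas $\partial Z/\partial t$ is defined everywhere; this is resolved by invoking the continuity of the common integral at $x=0$, already recorded in the proof of Proposition \ref{Prop3}. All the genuine analytic content — the termwise differentiation justified by the dominated convergence bound from \eqref{Zuniga_ineq}, and the identification of $\mathbf{f}(\partial,\alpha)Z_t$ with an oscillatory integral via Lemma \ref{lemma_0} and the functional equation \eqref{Functional_eq.} — is contained in the preceding propositions.
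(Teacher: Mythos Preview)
Your proof is correct and follows exactly the same approach as the paper, which simply cites Propositions \ref{Prop2}(ii) and \ref{Prop3}. Your additional remark about extending to $x=0$ by continuity is a reasonable clarification but is already implicit in those results.
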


\begin{proof}
The formula follows from Propositions \ref{Prop3} and \ref{Prop2} (ii).
\end{proof}

\begin{proposition}
\label{Prop4} If $0<\gamma\leq\alpha$, then
\[
|(\mathbf{f}(\partial,\gamma)Z_{t})(x)|\leq C(t^{1/2\alpha}+||x||_{p}%
)^{-2\gamma-4}\text{, for }x\in\mathbb{Q}_{p}^{4}\text{ and for }t>0.
\]

\end{proposition}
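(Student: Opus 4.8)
The plan is to mimic the proof of Proposition \ref{Prop2}(i): split into the two regimes $t^{1/2\alpha}\le\|x\|_{p}$ (equivalently $t\|x\|_{p}^{-2\alpha}\le 1$) and $t^{1/2\alpha}\ge\|x\|_{p}$, estimating $(\mathbf{f}(\partial,\gamma)Z_{t})(x)$ by a series in the first regime and by a crude majorant of the oscillatory integral (\ref{formula_op_f}) in the second. The key preliminary step is a series representation: for $x\neq 0$ and $t>0$,
\[
(\mathbf{f}(\partial,\gamma)Z_{t})(x)=\sum_{m=0}^{\infty}\frac{(-\kappa t)^{m}}{m!}\,\frac{1-p^{\gamma+m\alpha}}{1-p^{-\gamma-m\alpha-2}}\,|f(x)|_{p}^{-\gamma-m\alpha-2}.
\]
To obtain this I would repeat the computation in Proposition \ref{Prop1}(ii) with $\gamma+m\alpha$ replacing $m\alpha$: fix $x\neq 0$ and $M>1+ord(x)$; by Lemma \ref{lemma_1}, $(\mathbf{f}(\partial,\gamma)Z_{t}^{(M)})(x)=\int_{\|\eta\|_{p}\le p^{M}}\chi(x\cdot\eta)|f^{\circ}(\eta)|_{p}^{\gamma}e^{-\kappa t|f^{\circ}(\eta)|_{p}^{\alpha}}d^{4}\eta$; since $|f^{\circ}(\eta)|_{p}^{\alpha}$ is bounded on the compact set $\|\eta\|_{p}\le p^{M}$ by (\ref{Zuniga_ineq}), one may expand the exponential and integrate term by term, and each integral $\int_{\|\eta\|_{p}\le p^{M}}\chi(x\cdot\eta)|f^{\circ}(\eta)|_{p}^{\gamma+m\alpha}d^{4}\eta$ equals $\frac{1-p^{\gamma+m\alpha}}{1-p^{-\gamma-m\alpha-2}}|f(x)|_{p}^{-\gamma-m\alpha-2}$, independently of $M$, by (\ref{Functional_eq.}), the Fourier transform of a ball, the change of variables $z=x-p^{M}y$, and Proposition \ref{Prop1}(i). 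Letting $M\to\infty$ and invoking Proposition \ref{Prop3} gives the identity; note that all exponents $\gamma+m\alpha$ are positive, so the $m=0$ term survives, which is why no overall factor of $t$ appears, in contrast with Proposition \ref{Prop2}(i).

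\emph{The regime $t^{1/2\alpha}\le\|x\|_{p}$.} Using $\bigl|\tfrac{1-p^{\gamma+m\alpha}}{1-p^{-\gamma-m\alpha-2}}\bigr|\le(1-p^{-2})^{-1}p^{\gamma}(p^{\alpha})^{m}$ and $|f(x)|_{p}\ge B\|x\|_{p}^{2}$ from (\ref{Zuniga_ineq}), the series gives
\[
|(\mathbf{f}(\partial,\gamma)Z_{t})(x)|\le\frac{p^{\gamma}B^{-\gamma-2}}{1-p^{-2}}\,\|x\|_{p}^{-2\gamma-4}\sum_{m=0}^{\infty}\frac{(\kappa p^{\alpha}B^{-\alpha}t\|x\|_{p}^{-2\alpha})^{m}}{m!}=C_{1}'\,\|x\|_{p}^{-2\gamma-4}\,e^{\kappa p^{\alpha}B^{-\alpha}t\|x\|_{p}^{-2\alpha}}.
\]
Since $t\|x\|_{p}^{-2\alpha}\le 1$ here, the exponential is bounded by a constant, so $|(\mathbf{f}(\partial,\gamma)Z_{t})(x)|\le C_{2}'\|x\|_{p}^{-2\gamma-4}\le 2^{2\gamma+4}C_{2}'(t^{1/2\alpha}+\|x\|_{p})^{-2\gamma-4}$, the last step because $t^{1/2\alpha}+\|x\|_{p}\le 2\|x\|_{p}$.

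\emph{The regime $t^{1/2\alpha}\ge\|x\|_{p}$ (including $x=0$).} Discarding the oscillation in (\ref{formula_op_f}) and using (\ref{Zuniga_ineq}),
\[
|(\mathbf{f}(\partial,\gamma)Z_{t})(x)|\le\int_{\mathbb{Q}_{p}^{4}}|f^{\circ}(\eta)|_{p}^{\gamma}e^{-\kappa t|f^{\circ}(\eta)|_{p}^{\alpha}}d^{4}\eta\le A^{\gamma}\int_{\mathbb{Q}_{p}^{4}}\|\eta\|_{p}^{2\gamma}e^{-\kappa B^{\alpha}t\|\eta\|_{p}^{2\alpha}}d^{4}\eta.
\]
Picking $k\in\mathbb{Z}$ with $p^{k-1}\le t^{1/2\alpha}\le p^{k}$, so that $t\|\eta\|_{p}^{2\alpha}\ge\|p^{1-k}\eta\|_{p}^{2\alpha}$, and rescaling $\eta\mapsto p^{k-1}\eta$, the right-hand side is $\le p^{-(2\gamma+4)(k-1)}A^{\gamma}\int_{\mathbb{Q}_{p}^{4}}\|\eta\|_{p}^{2\gamma}e^{-\kappa B^{\alpha}\|\eta\|_{p}^{2\alpha}}d^{4}\eta$; the remaining integral is finite (a standard $p$-adic computation, using $2\gamma+4>0$) and $p^{-(2\gamma+4)k}\le t^{-(\gamma+2)/\alpha}$ because $p^{k}\ge t^{1/2\alpha}$. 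Thus $|(\mathbf{f}(\partial,\gamma)Z_{t})(x)|\le C_{3}'\,t^{-(\gamma+2)/\alpha}$ for all $x$ and $t>0$; when $t^{1/2\alpha}\ge\|x\|_{p}$ one has $t^{-(\gamma+2)/\alpha}=(t^{1/2\alpha})^{-2\gamma-4}\le 2^{2\gamma+4}(t^{1/2\alpha}+\|x\|_{p})^{-2\gamma-4}$. Taking $C=2^{2\gamma+4}\max(C_{2}',C_{3}')$ and combining the two regimes proves the proposition.

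The only step that calls for genuine care is the series representation of the first paragraph — namely the termwise integration over the truncated domain $\|\eta\|_{p}\le p^{M}$ and the $M$-independence of the individual integrals for $M>1+ord(x)$ — but this is an almost verbatim repetition of the proof of Proposition \ref{Prop1}(ii); everything else amounts to bookkeeping with the comparable quantities $\|x\|_{p}$ and $t^{1/2\alpha}$.
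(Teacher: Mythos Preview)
Your proof is correct and follows essentially the same route as the paper: derive a series representation for $(\mathbf{f}(\partial,\gamma)Z_{t})(x)$ by the method of Proposition~\ref{Prop1}(ii), use it in the regime $t\|x\|_{p}^{-2\alpha}\le 1$ to get the bound $C\|x\|_{p}^{-2\gamma-4}$, then use the integral representation (\ref{formula_op_f}) and a rescaling to get $Ct^{-(\gamma+2)/\alpha}$, and combine. Your observation that the $m=0$ term survives (because the exponent $\gamma+0\cdot\alpha=\gamma$ is strictly positive, so the functional equation applies and the term does not vanish) is in fact a small correction to the paper's displayed series (\ref{formula_2}), which starts at $m=1$; the omission is harmless for the estimate since the extra term is itself $O(\|x\|_{p}^{-2\gamma-4})$.
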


\begin{proof}
By reasoning as in the proof of Proposition \ref{Prop1} (ii), we have%
\begin{equation}
(\mathbf{f}(\partial,\gamma)Z_{t})(x)=\sum_{m=1}^{\infty}\frac{(-1)^{m}}%
{m!}\kappa^{m}t^{m}\left(  \frac{1-p^{\alpha m+\gamma}}{1-p^{-\alpha
m-\gamma-2}}\right)  |f(x)|_{p}^{-m\alpha-\gamma-2}. \label{formula_2}%
\end{equation}
If $t||x||^{-2\alpha}\leq1$, from (\ref{formula_2}) and (\ref{Zuniga_ineq}),
we obtain
\begin{equation}
|(\mathbf{f}(\partial,\gamma)Z_{t})(x)|\leq|f(x)|_{p}^{-\gamma-2}\sum
_{m=1}^{\infty}\frac{C^{m}}{m!}(t|f(x)|_{p}^{-\alpha})^{m}\leq C_{1}%
||x||_{p}^{-2\gamma-4}. \label{Enq_1}%
\end{equation}
On other hand, take $k$ such that $p^{k-1}\leq t^{1/2\alpha}\leq p^{k}$. From
(\ref{formula_op_f}) by using (\ref{Zuniga_ineq}), we get
\begin{align}
|(\mathbf{f}(\partial,\gamma)Z_{t})(x)|  &  \leq A^{\gamma}%
%TCIMACRO{\dint \limits_{\mathbb{Q}_{p}^{4}}}%
%BeginExpansion
{\displaystyle\int\limits_{\mathbb{Q}_{p}^{4}}}
%EndExpansion
||\eta||_{p}^{2\gamma}e^{-\kappa tB^{\alpha}||\eta||_{p}^{2\alpha}}d\eta\leq
A^{\gamma}%
%TCIMACRO{\dint \limits_{\mathbb{Q}_{p}^{4}}}%
%BeginExpansion
{\displaystyle\int\limits_{\mathbb{Q}_{p}^{4}}}
%EndExpansion
||\eta||_{p}^{2\gamma}e^{-aB^{\alpha}||p^{-(k-1)}\eta||_{p}^{2\alpha}}%
d\eta\nonumber\\
&  =A^{\gamma}p^{-4(k-1)-2\gamma(k-1)}%
%TCIMACRO{\dint \limits_{\mathbb{Q}_{p}^{4}}}%
%BeginExpansion
{\displaystyle\int\limits_{\mathbb{Q}_{p}^{4}}}
%EndExpansion
||\xi||_{p}^{2\gamma}e^{-aB^{\alpha}||\xi||_{p}^{2\alpha}}d^{4}\xi\leq
Ct^{-4-2\gamma/2\alpha}. \label{Enq_2}%
\end{align}
The announced results follows from inequalities\ (\ref{Enq_1})-(\ref{Enq_2}).
Indeed, $t||x||_{p}^{-2\alpha}\leq1$ implies that $||x||_{p}\geq
\frac{||x||_{p}}{2}+\frac{t^{1/2\alpha}}{2}$, and hence
\[
||x||_{p}^{-2\gamma-4}\leq2^{2\gamma+4}\left(  ||x||_{p}+t^{1/2\alpha}\right)
^{-2\gamma-4}.
\]
Now, if $t||x||^{-2\alpha}>1$, then $t^{1/2\alpha}>\frac{t^{1/2\alpha}}%
{2}+\frac{||x||_{p}}{2}$ and
\[
t^{-4-2\gamma/2\alpha}<2^{2\gamma+4}\left(  t^{1/2\alpha}+||x||_{p}\right)
^{-2\gamma-4}.
\]

\end{proof}

\begin{corollary}
\label{cor4}%
\[%
%TCIMACRO{\dint \limits_{\mathbb{Q}_{p}^{4}}}%
%BeginExpansion
{\displaystyle\int\limits_{\mathbb{Q}_{p}^{4}}}
%EndExpansion
(\mathbf{f}(\partial,\gamma)Z_{t})(x)d^{4}x=0\text{ for }t>0.
\]

\end{corollary}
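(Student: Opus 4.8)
The plan is to recognise $\mathbf{f}(\partial,\gamma)Z_{t}$ as an inverse Fourier transform and to compute the integral by Fourier inversion at the origin, in complete analogy with Corollary \ref{cor2} (i). Put $g(\eta):=|f^{\circ}(\eta)|_{p}^{\gamma}e^{-\kappa t|f^{\circ}(\eta)|_{p}^{\alpha}}$. I would first record two facts. By (\ref{Zuniga_ineq}) one has $0\le g(\eta)\le A^{\gamma}\|\eta\|_{p}^{2\gamma}e^{-\kappa tB^{\alpha}\|\eta\|_{p}^{2\alpha}}$, so $g\in L^{1}(\mathbb{Q}_{p}^{4})$, and $g$ is continuous because $f^{\circ}$ and $|\cdot|_{p}$ are. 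Secondly, Proposition \ref{Prop4} (with $0<\gamma\le\alpha$) gives $|(\mathbf{f}(\partial,\gamma)Z_{t})(x)|\le C(t^{1/2\alpha}+\|x\|_{p})^{-2\gamma-4}$, and since $2\gamma+4>4$ this upper bound is integrable on $\mathbb{Q}_{p}^{4}$; hence $\mathbf{f}(\partial,\gamma)Z_{t}\in L^{1}(\mathbb{Q}_{p}^{4})$.

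By Proposition \ref{Prop3}, $(\mathbf{f}(\partial,\gamma)Z_{t})(x)=\int_{\mathbb{Q}_{p}^{4}}\chi(x\cdot\eta)g(\eta)\,d^{4}\eta=(\mathcal{F}g)(-x)$, so $\mathcal{F}g$ is the function $x\mapsto(\mathbf{f}(\partial,\gamma)Z_{t})(-x)$, which also lies in $L^{1}(\mathbb{Q}_{p}^{4})$ by the previous paragraph. Since $g$ and $\mathcal{F}g$ are both in $L^{1}$ and $g$ is continuous, the Fourier inversion formula holds pointwise: $g(\eta)=\int_{\mathbb{Q}_{p}^{4}}\chi(\eta\cdot u)(\mathcal{F}g)(u)\,d^{4}u$ for every $\eta$. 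Taking $\eta=0$ and using invariance of the Haar measure under $u\mapsto-u$, $\int_{\mathbb{Q}_{p}^{4}}(\mathbf{f}(\partial,\gamma)Z_{t})(x)\,d^{4}x=\int_{\mathbb{Q}_{p}^{4}}(\mathcal{F}g)(u)\,d^{4}u=g(0)$. Finally $g(0)=|f^{\circ}(0)|_{p}^{\gamma}e^{-\kappa t|f^{\circ}(0)|_{p}^{\alpha}}=0$, because ellipticity of $f^{\circ}$ forces $f^{\circ}(0)=0$, whence $|f^{\circ}(0)|_{p}=0$ and $0^{\gamma}=0$ as $\gamma>0$. This gives the assertion.

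The one point that requires care is that the result cannot be obtained by naively applying Fubini's theorem to $\int_{\mathbb{Q}_{p}^{4}}\bigl(\int_{\mathbb{Q}_{p}^{4}}\chi(x\cdot\eta)g(\eta)\,d^{4}\eta\bigr)d^{4}x$, since $\int_{\mathbb{Q}_{p}^{4}}\chi(x\cdot\eta)\,d^{4}x$ does not converge; the interchange is legitimised precisely by the two $L^{1}$-membership statements above, which is exactly why the decay bound of Proposition \ref{Prop4} enters the proof. An alternative, more computational route would be to integrate the truncations $(\mathbf{f}(\partial,\gamma)Z_{t}^{(M)})(x)$ of Lemma \ref{lemma_1} over large balls $B_{N}^{4}$, use $\int_{B_{N}^{4}}\chi(x\cdot\eta)\,d^{4}x=p^{4N}\Omega(p^{N}\|\eta\|_{p})$, estimate $\int_{\|\eta\|_{p}\le p^{-N}}|g(\eta)|\,d^{4}\eta=O(p^{-N(2\gamma+4)})$ via (\ref{Zuniga_ineq}), and then let $N\to\infty$ and $M\to\infty$; this works but is less transparent than the Fourier-inversion argument.
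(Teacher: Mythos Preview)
Your proof is correct and is precisely the argument the paper has in mind: the corollary is stated immediately after Propositions~\ref{Prop3} and~\ref{Prop4} with no separate proof, and these are exactly the two ingredients you use---the former to identify $(\mathbf{f}(\partial,\gamma)Z_{t})$ as the inverse Fourier transform of $g(\eta)=|f^{\circ}(\eta)|_{p}^{\gamma}e^{-\kappa t|f^{\circ}(\eta)|_{p}^{\alpha}}$, the latter to put it in $L^{1}$ so that Fourier inversion at the origin gives $g(0)=0$. Your remark that a naive Fubini interchange fails and that the $L^{1}$-bound from Proposition~\ref{Prop4} is what rescues the argument is well taken.
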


\section{\label{Sect.Cauchy.Problem}The Cauchy problem}

Along this section, we fix the domain ($Dom\left(  \mathbf{f}\right)  $) of
the operator $\mathbf{f}(\partial,\alpha)$ to be the $\mathbb{C}$-vector space
of locally constant functions satisfying (\ref{condicion1}), and
$\mathbf{f}(\partial,\alpha)\varphi$ is given by (\ref{Ext_oper}) for
$\varphi\in Dom\left(  \mathbf{f}\right)  $. Note that $\mathfrak{M}%
_{2\lambda}\subset Dom\left(  \mathbf{f}\right)  $ for $\lambda<\alpha$.

In this section we study the following Cauchy problem:%

\begin{equation}
\left\{
\begin{array}
[c]{ll}%
\frac{\partial u(x,t)}{\partial t}+\kappa\mathbf{f}(\partial,\gamma
)u(x,t)=g(x,t), & x\in\mathbb{Q}_{p}^{4},\quad0<t\leq T,\\
& \\
u(x,0)=\varphi(x) &
\end{array}
\right.  \label{Cauchy_Pr_1}%
\end{equation}
where $\kappa>0$, $\alpha>0$, $T>0$, $\varphi\in\mathfrak{M}_{2\lambda}$,
$g\left(  x,t\right)  \in\mathfrak{M}_{2\lambda}$ uniformly in $t$,
$0\leq\lambda<\alpha$, $g(x,t)$ is continuous in $\left(  x,t\right)  $, and
$u:\mathbb{Q}_{p}^{4}\times\left[  0,T\right]  \rightarrow\mathbb{C}$ is an
unknown function. We say that $u\left(  x,t\right)  $ is a \textit{solution}
of (\ref{Cauchy_Pr_1}), if $u\left(  x,t\right)  $ is continuous in $\left(
x,t\right)  $, $u\left(  \cdot,t\right)  \in Dom\left(  \mathbf{f}\right)  $
for $t\in\left[  0,T\right]  $, $u\left(  x,\cdot\right)  $ is continuously
differentiable for $t\in\left(  0,T\right]  $, $u\left(  x,t\right)
\in\mathfrak{M}_{2\lambda}$ uniformly in $t$, and $u$ satisfies
(\ref{Cauchy_Pr_1}) for all $t>0$.

\begin{theorem}
\label{Thm1} The function
\[
u(x,t)=%
%TCIMACRO{\dint \limits_{\mathbb{Q}_{p}^{4}}}%
%BeginExpansion
{\displaystyle\int\limits_{\mathbb{Q}_{p}^{4}}}
%EndExpansion
Z(x-y,t)\varphi(y)d^{4}y+%
%TCIMACRO{\dint \limits_{0}^{t}}%
%BeginExpansion
{\displaystyle\int\limits_{0}^{t}}
%EndExpansion
\left(
%TCIMACRO{\dint \limits_{\mathbb{Q}_{p}^{4}}}%
%BeginExpansion
{\displaystyle\int\limits_{\mathbb{Q}_{p}^{4}}}
%EndExpansion
Z(x-y,t-\theta)g(y,\theta)d^{4}y\right)  d\theta
\]
is a solution of Cauchy problem (\ref{Cauchy_Pr_1}).
\end{theorem}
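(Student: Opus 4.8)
The plan is to verify the three ingredients of the definition of a solution separately, treating the two summands of $u(x,t)$ --- call them $u_{1}(x,t)=\int_{\mathbb{Q}_{p}^{4}}Z(x-y,t)\varphi(y)d^{4}y$ (the homogeneous part) and $u_{2}(x,t)=\int_{0}^{t}\int_{\mathbb{Q}_{p}^{4}}Z(x-y,t-\theta)g(y,\theta)d^{4}y\,d\theta$ (Duhamel's term) --- and then combining. First I would check that $u$ is well defined and lies in $\mathfrak{M}_{2\lambda}$ uniformly in $t$: since $\varphi,g(\cdot,\theta)\in\mathfrak{M}_{2\lambda}$, the Gaussian-type bound $Z(x-y,t)\leq C_{1}t(t^{1/2\alpha}+\|x-y\|_{p})^{-2\alpha-4}$ from Proposition \ref{Prop2}(i) together with $\int Z_{t}=1$ (Corollary \ref{cor2}(i)) gives convergence of the integrals and the polynomial growth estimate $|u_{i}(x,t)|\leq C(1+\|x\|_{p}^{2\lambda})$, the key point being that the convolution of a probability density with a function of growth $\|x\|^{2\lambda}$ still has growth $\|x\|^{2\lambda}$ because $2\lambda<2\alpha<2\alpha+4$; this also yields local constancy in $x$ (the exponent of local constancy of $Z_{t}(x)$ on regions $\|x\|_{p}$ bounded below is controlled, and one splits the $y$-integral accordingly). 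Continuity in $(x,t)$ follows from dominated convergence using the same bounds, and membership in $Dom(\mathbf{f})$ is immediate once growth and local constancy are established.

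Next I would establish that $u$ satisfies the PDE. For the homogeneous part, differentiating under the integral sign (justified by the bound on $\partial Z/\partial t$ in Proposition \ref{Prop2}(iii)) and using Corollary \ref{cor3}, $\partial Z/\partial t=-\kappa(\mathbf{f}(\partial,\alpha)Z)(\cdot,t)$, gives $\partial_{t}u_{1}=-\kappa\int(\mathbf{f}(\partial,\gamma)Z)(x-y,t)\varphi(y)d^{4}y$; one then interchanges $\mathbf{f}(\partial,\gamma)$ with the $y$-integral --- this uses formula (\ref{Ext_oper}) for $\mathbf{f}(\partial,\gamma)$ applied to $u_{1}(\cdot,t)\in Dom(\mathbf{f})$, Fubini, and a translation of variables --- to conclude $\partial_{t}u_{1}+\kappa\mathbf{f}(\partial,\gamma)u_{1}=0$. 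For the Duhamel term, differentiating $u_{2}(x,t)=\int_{0}^{t}w(x,t,\theta)d\theta$ with $w(x,t,\theta)=\int Z(x-y,t-\theta)g(y,\theta)d^{4}y$ in $t$ produces the boundary term $\lim_{\theta\to t^{-}}w(x,t,\theta)=\lim_{s\to0^{+}}\int Z(x-y,s)g(y,t)d^{4}y=g(x,t)$ by Lemma \ref{lemma3} (here is where the hypothesis $\lambda<\alpha$ and the continuity of $g$ enter), plus the integral $\int_{0}^{t}\partial_{t}w(x,t,\theta)d\theta=-\kappa\int_{0}^{t}(\mathbf{f}(\partial,\gamma)w)(x,t,\theta)d\theta=-\kappa\mathbf{f}(\partial,\gamma)u_{2}(x,t)$, again interchanging the operator with the $\theta$-integral. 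Adding, $\partial_{t}u_{2}+\kappa\mathbf{f}(\partial,\gamma)u_{2}=g(x,t)$.

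Finally the initial condition: $u_{2}(x,t)\to0$ as $t\to0^{+}$ because $|u_{2}(x,t)|\leq t\cdot\sup_{\theta\le t}\|g(\cdot,\theta)\|$-type bound on a compact region, while $u_{1}(x,t)\to\varphi(x)$ as $t\to0^{+}$ directly by Lemma \ref{lemma3} with the roles of the variables as there. The main obstacle I anticipate is the rigorous justification of the two interchanges $\mathbf{f}(\partial,\gamma)\circ\int=\int\circ\,\mathbf{f}(\partial,\gamma)$: because $\mathbf{f}(\partial,\gamma)$ is defined by the singular integral (\ref{Ext_oper}) over all of $\mathbb{Q}_{p}^{4}$ (with the near-origin singularity $|f(y)|_{p}^{-\gamma-2}$ cancelled only by the difference $\varphi(x-y)-\varphi(x)$), one must split the $y$-domain into $\|y\|_{p}$ small (where local constancy of $u(\cdot,t)$ kills the integrand) and $\|y\|_{p}$ large (where Fubini applies against the convolution integral, using the $L^{1}$-type decay $\|y\|^{-2\gamma-4+2\lambda}$), mirroring the careful splitting already carried out in the proof of Lemma \ref{lemma_1}. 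The differentiation-under-the-integral steps for $u_{2}$ near $\theta=t$ also require the Gaussian bounds uniformly as $t-\theta\to0^{+}$, which Proposition \ref{Prop2}(i) and Proposition \ref{Prop4} supply.
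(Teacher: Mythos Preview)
Your proposal is correct and follows essentially the same route as the paper: the paper packages your steps into Lemmas \ref{lemma2}--\ref{lemma7} (membership of $u_{1},u_{2}$ in $\mathfrak{M}_{2\lambda}$, the differentiation formulas, and the interchange $\mathbf{f}(\partial,\gamma)\circ\int=\int\circ\,\mathbf{f}(\partial,\gamma)$ via the same near/far splitting you describe), then invokes Corollary \ref{cor3} and Lemma \ref{lemma3} exactly as you do. The one subtlety you only touch on implicitly is that in computing $\partial_{t}u_{2}$ the paper subtracts $g(x,\theta)$ inside the inner integral (Lemma \ref{lemma4}) so that local constancy kills the $\theta\to t$ singularity of $\partial_{t}Z$, using $\int\partial_{t}Z=0$; your final remark about the uniform Gaussian bounds near $\theta=t$ is pointing at precisely this issue.
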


The proof of the theorem will be accomplished through the following lemmas.

\begin{lemma}
\label{lemma2}Assume that $g\in\mathfrak{M}_{2\lambda}$, $0\leq\lambda<\alpha
$, uniformly with respect to $\theta$. Then the function
\[
u_{2}(x,t,\tau):=%
%TCIMACRO{\dint \limits_{\tau}^{t}}%
%BeginExpansion
{\displaystyle\int\limits_{\tau}^{t}}
%EndExpansion
\left(
%TCIMACRO{\dint \limits_{\mathbb{Q}_{p}^{4}}}%
%BeginExpansion
{\displaystyle\int\limits_{\mathbb{Q}_{p}^{4}}}
%EndExpansion
Z(x-y,t-\theta)g(y,\theta)d^{4}y\right)  d\theta
\]
belongs to $\mathfrak{M}_{2\lambda}$ uniformly with respect to $t$ and$\ \tau$.
\end{lemma}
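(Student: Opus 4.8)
The plan is to estimate $u_2(x,t,\tau)$ directly by splitting the inner integral according to the size of $\|x-y\|_p$ relative to $\|x\|_p$, exactly in the spirit of the proof of Lemma \ref{lemma3}. First I would write
\[
|u_2(x,t,\tau)|\leq
{\displaystyle\int\limits_{\tau}^{t}}\left(
{\displaystyle\int\limits_{\mathbb{Q}_{p}^{4}}}Z(x-y,t-\theta)|g(y,\theta)|\,d^{4}y\right)d\theta,
\]
and use the hypothesis $g\in\mathfrak{M}_{2\lambda}$ uniformly in $\theta$ to bound $|g(y,\theta)|\leq C(1+\|y\|_p^{2\lambda})$ with $C$ independent of $\theta$. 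Since $\|y\|_p\leq\max(\|x\|_p,\|x-y\|_p)$, we get $1+\|y\|_p^{2\lambda}\leq C'(1+\|x\|_p^{2\lambda}+\|x-y\|_p^{2\lambda})$ (absorbing constants depending only on $\lambda$). This reduces the problem to bounding, for fixed $x$ and $s:=t-\theta>0$, the two integrals $\int Z(x-y,s)\,d^4y$ and $\int Z(x-y,s)\|x-y\|_p^{2\lambda}\,d^4y$.

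Next, the first of these equals $1$ by Corollary \ref{cor2}(i). For the second, change variables $z=x-y$ and invoke the bound $Z(z,s)\leq C_1 s(s^{1/2\alpha}+\|z\|_p)^{-2\alpha-4}$ from Proposition \ref{Prop2}(i), so that
\[
{\displaystyle\int\limits_{\mathbb{Q}_{p}^{4}}}Z(z,s)\|z\|_p^{2\lambda}\,d^4z
\leq C_1 s
{\displaystyle\int\limits_{\mathbb{Q}_{p}^{4}}}(s^{1/2\alpha}+\|z\|_p)^{-2\alpha-4}\|z\|_p^{2\lambda}\,d^4z.
\]
A scaling argument (substitute $z=s^{1/2\alpha}w$, which multiplies the measure by $s^{4/2\alpha}=s^{2/\alpha}$) shows this last integral equals $C_1 s\cdot s^{2/\alpha}\cdot s^{(2\lambda)/2\alpha}\cdot s^{(-2\alpha-4)/2\alpha}\int_{\mathbb{Q}_p^4}(1+\|w\|_p)^{-2\alpha-4}\|w\|_p^{2\lambda}\,dw = C\, s^{\lambda/\alpha}$, where the $w$-integral converges precisely because $2\lambda<2\alpha$, i.e. the integrand behaves like $\|w\|_p^{2\lambda-2\alpha-4}$ at infinity with exponent $<-4$. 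Hence $\int Z(x-y,s)|g(y,\theta)|\,d^4y\leq C(1+\|x\|_p^{2\lambda})(1+s^{\lambda/\alpha})$.

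Finally, integrate this bound over $\theta\in[\tau,t]\subseteq[0,T]$: since $s=t-\theta\in[0,T]$, the factor $1+s^{\lambda/\alpha}$ is bounded by a constant depending only on $T$, $\alpha$, $\lambda$, and the length of the interval is at most $T$. Therefore
\[
|u_2(x,t,\tau)|\leq C_T\,(1+\|x\|_p^{2\lambda}),
\]
with $C_T$ independent of $x$, $t$, and $\tau$, which is exactly the claim that $u_2\in\mathfrak{M}_{2\lambda}$ uniformly in $t$ and $\tau$. One also needs local constancy of $u_2(\cdot,t,\tau)$ with an exponent independent of $t,\tau$; this follows because $Z(x-y,s)$, for each fixed $s$, is locally constant in $x$ (Proposition \ref{Prop1}(iii) gives that $Z(x+\xi,s)=Z(x,s)$ for $\|\xi\|_p<p^{-1}\|x\|_p$, and near $x=0$ one uses boundedness together with the decay estimate), and the convergent integral representation lets this constancy pass to $u_2$; I would remark that the exponent of local constancy can be taken uniform because the estimates above are uniform in $s\in[0,T]$. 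The main obstacle is organizing the scaling estimate cleanly enough that the dependence of all constants on $\tau$ and $t$ is manifestly absent; once the integral $\int Z(x-y,s)\|x-y\|_p^{2\lambda}\,d^4y\leq C s^{\lambda/\alpha}$ is in hand with $C$ depending only on $\alpha,\lambda,p$, the rest is routine.
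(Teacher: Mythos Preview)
Your growth estimate is correct and essentially parallel to the paper's: after bounding $Z$ via Proposition~\ref{Prop2}(i), the paper applies directly the inequality (quoted from \cite{R-Zu})
\[
\int_{\mathbb{Q}_p^4}(b+\|x-\xi\|_p)^{-2\alpha-4}\|\xi\|_p^{2\lambda}\,d^4\xi \leq Cb^{-2\alpha}(1+\|x\|_p^{2\lambda})
\]
with $b=(t-\theta)^{1/2\alpha}$, whereas you reprove what this delivers here by the ultrametric splitting $\|y\|_p^{2\lambda}\leq \|x\|_p^{2\lambda}+\|x-y\|_p^{2\lambda}$ together with $\int Z(z,s)\|z\|_p^{2\lambda}d^4z\leq Cs^{\lambda/\alpha}$. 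One technical slip: the substitution $z=s^{1/2\alpha}w$ is meaningless in $\mathbb{Q}_p^4$ since $s^{1/2\alpha}$ is real; replace it by $z=p^{k}w$ with $p^{k-1}\leq s^{1/2\alpha}\leq p^{k}$ (exactly as in the proof of Proposition~\ref{Prop2}) and the computation goes through.

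Your local constancy argument, however, has a genuine gap. Proposition~\ref{Prop1}(iii) gives $Z(x+\xi,s)=Z(x,s)$ only for $\|\xi\|_p<p^{-1}\|x\|_p$, so the exponent of local constancy of $Z(\cdot,s)$ is \emph{not} uniform in its argument; in particular, for $y$ close to $x$ the integrand $Z(x-y,s)$ is not constant in $x$ on any fixed ball, and no amount of ``boundedness together with the decay estimate'' fixes this. The paper's argument bypasses $Z$ entirely and is a one-line change of variables: if $l$ is the uniform exponent of local constancy of $g$ and $\|\xi\|_p\leq p^{l}$, substitute $y\mapsto y+\xi$ in the inner integral to obtain
\[
u_2(x+\xi,t,\tau)=\int_\tau^t\int_{\mathbb{Q}_p^4}Z(x-y,t-\theta)\,g(y+\xi,\theta)\,d^4y\,d\theta=u_2(x,t,\tau),
\]
so $u_2$ inherits the exponent $l$ of $g$, uniformly in $t,\tau$.
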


\begin{proof}
We first note that $u_{2}(x,t,\tau)$ has the same exponent of local constancy
as $g$, and thus it does not depend on $t$ and$\ \tau$. We now show that
$|u_{2}(x,t,\tau)|\leq C_{0}(1+||x||_{p}^{2\lambda})$. By Proposition
\ref{Prop2} (i),%
\begin{align*}
|u_{2}(x,t,\tau)|  &  \leq%
%TCIMACRO{\dint \limits_{\tau}^{t}}%
%BeginExpansion
{\displaystyle\int\limits_{\tau}^{t}}
%EndExpansion
\left(
%TCIMACRO{\dint \limits_{\mathbb{Q}_{p}^{4}}}%
%BeginExpansion
{\displaystyle\int\limits_{\mathbb{Q}_{p}^{4}}}
%EndExpansion
|Z(x-y,t-\theta)||g(y,\theta)|d^{4}y\right)  d\theta\\
&  \leq C_{1}%
%TCIMACRO{\dint \limits_{\tau}^{t}}%
%BeginExpansion
{\displaystyle\int\limits_{\tau}^{t}}
%EndExpansion
(t-\theta)\left(
%TCIMACRO{\dint \limits_{\mathbb{Q}_{p}^{4}}}%
%BeginExpansion
{\displaystyle\int\limits_{\mathbb{Q}_{p}^{4}}}
%EndExpansion
((t-\theta)^{1/2\alpha}+||x-y||_{p})^{-2\alpha-4}(1+||y||_{p}^{2\lambda
})dy\right)  d\theta.
\end{align*}
Now the result follows from the following estimation.

\textbf{Assertion}(\cite[Proposition 2]{R-Zu}).\textbf{ }If $b>0$,
$0\leq\lambda<2\alpha$, and $x\in\mathbb{Q}_{p}^{4},$ then
\begin{equation}%
%TCIMACRO{\dint \limits_{\mathbb{Q}_{p}^{4}}}%
%BeginExpansion
{\displaystyle\int\limits_{\mathbb{Q}_{p}^{4}}}
%EndExpansion
\left(  b+||x-\xi||_{p}\right)  ^{-2\alpha-4}||\xi||_{p}^{2\lambda}d^{4}%
\xi\leq Cb^{-2\alpha}\left(  1+||x||_{p}^{2\lambda}\right)  , \label{Key_enq}%
\end{equation}
where the constant $C$ does not depend on $b$ or $x$.
\end{proof}

\begin{lemma}
\label{lemma4} Assume that $g\in\mathfrak{M}_{2\lambda}$, $0\leq\lambda
<\alpha$, uniformly with respect to $\theta$. Then
\[
\frac{\partial u_{2}\left(  x,t,\tau\right)  }{\partial t}=g(x,t)+%
%TCIMACRO{\dint \limits_{\tau}^{t}}%
%BeginExpansion
{\displaystyle\int\limits_{\tau}^{t}}
%EndExpansion
\left(
%TCIMACRO{\dint \limits_{\mathbb{Q}_{p}^{4}}}%
%BeginExpansion
{\displaystyle\int\limits_{\mathbb{Q}_{p}^{4}}}
%EndExpansion
\frac{\partial Z(x-\xi,t-\theta)}{\partial t}[g(\xi,\theta)-g(x,\theta
)]d^{4}\xi\right)  d\theta.
\]

\end{lemma}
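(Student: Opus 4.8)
The plan is to differentiate $u_{2}(x,t,\tau)=\int_{\tau}^{t}w(x,t,\theta)\,d\theta$ under the integral sign, where $w(x,t,\theta):=\int_{\mathbb{Q}_{p}^{4}}Z(x-\xi,t-\theta)g(\xi,\theta)\,d^{4}\xi$. Since the upper limit of integration also appears in the integrand, the derivative splits into two pieces by the Leibniz rule: a boundary term $\lim_{\theta\to t^{-}}w(x,t,\theta)$ coming from differentiating the upper limit, plus $\int_{\tau}^{t}\partial_{t}w(x,t,\theta)\,d\theta$. First I would identify the boundary term: $\lim_{\theta\to t^{-}}w(x,t,\theta)=\lim_{s\to 0^{+}}\int_{\mathbb{Q}_{p}^{4}}Z(x-\xi,s)g(\xi,t)\,d^{4}\xi=g(x,t)$ by Lemma \ref{lemma3}, using that $g(\cdot,t)\in\mathfrak{M}_{2\lambda}$ with $\lambda<\alpha$. (One should note here that $t$ is held fixed in the limit, so Lemma \ref{lemma3} applies verbatim to the function $\xi\mapsto g(\xi,t)$.)

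Next I would handle the term $\int_{\tau}^{t}\partial_{t}w(x,t,\theta)\,d\theta$. The claimed formula replaces $\partial_{t}Z(x-\xi,t-\theta)g(\xi,\theta)$ by $\partial_{t}Z(x-\xi,t-\theta)[g(\xi,\theta)-g(x,\theta)]$ inside the integral; this substitution is legitimate because $\int_{\mathbb{Q}_{p}^{4}}\partial_{t}Z(x-\xi,s)\,d^{4}\xi=0$ for $s>0$, which follows by differentiating Corollary \ref{cor2}(i) in $t$ — equivalently by Corollary \ref{cor3} together with Corollary \ref{cor4}. So the subtracted copy $-\int_{\mathbb{Q}_{p}^{4}}\partial_{t}Z(x-\xi,t-\theta)g(x,\theta)\,d^{4}\xi=-g(x,\theta)\cdot 0=0$ contributes nothing, and the two formulations agree once differentiation under the integral is justified.

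The differentiation under the integral sign is the main technical point, and it has to be done in two stages. For the inner $\xi$-integral I would show $\partial_{t}w(x,t,\theta)=\int_{\mathbb{Q}_{p}^{4}}\partial_{t}Z(x-\xi,t-\theta)[g(\xi,\theta)-g(x,\theta)]\,d^{4}\xi$: the difference quotient is controlled by the mean value theorem in $t$ plus the bound $|\partial_{t}Z(x-\xi,s)|\leq C_{2}(s^{1/2\alpha}+\|x-\xi\|_{p})^{-2\alpha-4}$ from Proposition \ref{Prop2}(iii), and local constancy of $g$ kills the singular region near $\xi=x$ exactly as in the proof of Lemma \ref{lemma3}, while the growth $|g(\xi,\theta)-g(x,\theta)|\leq C(1+\|\xi\|_{p}^{2\lambda})$ is absorbed by the Assertion (\ref{Key_enq}) since $\lambda<\alpha<2\alpha$; this produces an integrable dominating function locally uniformly in $t$. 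For the outer $\theta$-integral I would invoke the standard theorem on differentiating $\int_{\tau}^{t}w(x,t,\theta)\,d\theta$ with respect to the parameter $t$ that also occurs as a limit, which requires $w$ continuous up to $\theta=t$ (it extends continuously with value $g(x,t)$, by the computation of the boundary term above) and $\partial_{t}w(x,t,\theta)$ jointly continuous with a locally integrable bound in $\theta$ near $\theta=t$ — the latter again from Proposition \ref{Prop2}(iii) combined with (\ref{Key_enq}), giving $|\partial_{t}w(x,t,\theta)|\leq C(t-\theta)^{-1+\varepsilon}(1+\|x\|_{p}^{2\lambda})$-type control, integrable on $(\tau,t)$. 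Assembling the boundary term $g(x,t)$ with the parameter-derivative integral yields the stated identity. The one subtlety to watch is making the dominating bounds uniform for $t$ in a neighbourhood of the point of differentiation and uniform in $\theta\in(\tau,t)$, which is where the sharp two-sided estimate (\ref{Zuniga_ineq}) and Proposition \ref{Prop2} do the real work.
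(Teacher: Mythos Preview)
Your approach is correct and essentially the same as the paper's: both apply the Leibniz rule, identify the boundary term as $g(x,t)$ via Lemma~\ref{lemma3}, use $\int_{\mathbb{Q}_p^4}\partial_t Z(\cdot,s)\,d^4\xi=0$ to insert the subtraction $g(\xi,\theta)-g(x,\theta)$, and rely on the uniform local constancy of $g$ to kill the singularity at $\theta=t$. The paper carries this out by an explicit regularisation $u_{2,h}=\int_\tau^{t-h}(\cdots)\,d\theta$ and letting $h\to 0^+$, whereas you invoke the differentiation theorem directly; one minor imprecision in your write-up is the claimed bound $|\partial_t w|\leq C(t-\theta)^{-1+\varepsilon}(\cdots)$ --- once local constancy restricts the $\xi$-integral to $\|x-\xi\|_p\geq p^L$, Proposition~\ref{Prop2}(iii) in fact gives a bound \emph{uniform} in $t-\theta$ (the paper says ``no singularity at $t=\theta$''), so the $\varepsilon$ is unnecessary.
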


\begin{proof}
Set
\[
u_{2,h}(x,t,\tau):=%
%TCIMACRO{\dint \limits_{\tau}^{t-h}}%
%BeginExpansion
{\displaystyle\int\limits_{\tau}^{t-h}}
%EndExpansion
d\theta%
%TCIMACRO{\dint \limits_{\mathbb{Q}_{p}^{4}}}%
%BeginExpansion
{\displaystyle\int\limits_{\mathbb{Q}_{p}^{4}}}
%EndExpansion
Z(x-\xi,t-\theta)g(\xi,\theta)d^{4}\xi,
\]
where $h$ is a small positive number. By differentiating $u_{h}$ under the
sign of integral
\begin{align*}
\frac{\partial u_{2,h}}{\partial t}  &  =%
%TCIMACRO{\dint \limits_{\tau}^{t-h}}%
%BeginExpansion
{\displaystyle\int\limits_{\tau}^{t-h}}
%EndExpansion
d\theta%
%TCIMACRO{\dint \limits_{\mathbb{Q}_{p}^{4}}}%
%BeginExpansion
{\displaystyle\int\limits_{\mathbb{Q}_{p}^{4}}}
%EndExpansion
\frac{\partial Z(x-\xi,t-\theta)}{\partial t}g(\xi,\theta)d^{4}\xi+%
%TCIMACRO{\dint \limits_{\mathbb{Q}_{p}^{4}}}%
%BeginExpansion
{\displaystyle\int\limits_{\mathbb{Q}_{p}^{4}}}
%EndExpansion
Z(x-\xi,h)g(\xi,t-h)d^{4}\xi\\
&  =%
%TCIMACRO{\dint \limits_{\tau}^{t-h}}%
%BeginExpansion
{\displaystyle\int\limits_{\tau}^{t-h}}
%EndExpansion
d\theta%
%TCIMACRO{\dint \limits_{\mathbb{Q}_{p}^{4}}}%
%BeginExpansion
{\displaystyle\int\limits_{\mathbb{Q}_{p}^{4}}}
%EndExpansion
\frac{\partial Z(x-\xi,t-\theta)}{\partial t}[g(\xi,\theta)-g(x,\theta
)]d^{4}\xi\\
&  +%
%TCIMACRO{\dint \limits_{\tau}^{t-h}}%
%BeginExpansion
{\displaystyle\int\limits_{\tau}^{t-h}}
%EndExpansion
g(x,\theta)d\theta%
%TCIMACRO{\dint \limits_{\mathbb{Q}_{p}^{4}}}%
%BeginExpansion
{\displaystyle\int\limits_{\mathbb{Q}_{p}^{4}}}
%EndExpansion
\frac{\partial Z(x-\xi,t-\theta)}{\partial t}d^{4}\xi+%
%TCIMACRO{\dint \limits_{\mathbb{Q}_{p}^{4}}}%
%BeginExpansion
{\displaystyle\int\limits_{\mathbb{Q}_{p}^{4}}}
%EndExpansion
Z(x-\xi,h)[g(\xi,t-h)-g(\xi,t)]d^{4}\xi\\
&  +%
%TCIMACRO{\dint \limits_{\mathbb{Q}_{p}^{4}}}%
%BeginExpansion
{\displaystyle\int\limits_{\mathbb{Q}_{p}^{4}}}
%EndExpansion
Z(x-\xi,h)g(\xi,t)d^{4}\xi.
\end{align*}
The first integral contains no singularity at $t=\theta$ due to Proposition
\ref{Prop2} (iii) and the local constancy of $g$. By Proposition \ref{Prop2}
(i) and Corollary \ref{cor2} (i), the second integral is equal to zero. The
third integral can be written as the sum of the integrals over $\{\xi
\in\mathbb{Q}_{p}^{4}\mid||x-\xi||_{p}\leq p^{M}\}$, where $M$ is the exponent
of local constancy of $g$, and the complement of this set. The first integral
tends to zero when $h\rightarrow0^{+}$ due to the uniform local constancy of
$g$, while the other tends to zero when $h\rightarrow0^{+}$ due to Proposition
\ref{Prop2} (i) and condition $\lambda<\alpha$. Finally, the fourth integral
tends to $g(x,t)$ as $h\rightarrow0^{+}$, c.f. Lemma \ref{lemma3}.
\end{proof}

For $\varphi\in\mathfrak{M}_{2\lambda}$, $0\leq\lambda<\alpha$, we set%
\[
u_{1}(x,t):=%
%TCIMACRO{\dint \limits_{\mathbb{Q}_{p}^{4}}}%
%BeginExpansion
{\displaystyle\int\limits_{\mathbb{Q}_{p}^{4}}}
%EndExpansion
Z(x-y,t)\varphi(y)d^{4}y\text{ for }t>0.
\]

\begin{lemma}
\label{lemma5}Assume that $\varphi\in\mathfrak{M}_{2\lambda}$, $0\leq
\lambda<\alpha$, then the following assertions hold:

\noindent(i) $u_{1}(x,t)$ belongs to $\mathfrak{M}_{2\lambda}$ uniformly with
respect to $t$;

\noindent(ii) $\frac{\partial u_{1}}{\partial t}(x,t)=\int_{\mathbb{Q}_{p}%
^{4}}\frac{\partial Z}{\partial t}(x-y,t)\varphi(y)d^{4}y$ for $t>0$.
\end{lemma}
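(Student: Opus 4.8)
\textbf{Plan of proof for Lemma \ref{lemma5}.}

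The plan is to treat the two assertions in turn, both by reducing to the pointwise bounds already available for the heat kernel. For part (i), I would bound $|u_1(x,t)|$ directly using the Gaussian-type estimate of Proposition \ref{Prop2} (i), namely $Z(x-y,t)\leq C_1 t\,(t^{1/2\alpha}+\|x-y\|_p)^{-2\alpha-4}$, together with the hypothesis $|\varphi(y)|\leq C(1+\|y\|_p^{2\lambda})$. This gives
\[
|u_1(x,t)|\leq C_1 C\, t\int_{\mathbb{Q}_p^4}(t^{1/2\alpha}+\|x-y\|_p)^{-2\alpha-4}(1+\|y\|_p^{2\lambda})\,d^4y,
\]
and then I would apply the Assertion quoted inside the proof of Lemma \ref{lemma2} (i.e. \cite[Proposition 2]{R-Zu}), with $b=t^{1/2\alpha}$, which bounds the integral by $C b^{-2\alpha}(1+\|x\|_p^{2\lambda})=C t^{-1}(1+\|x\|_p^{2\lambda})$; the factor $t$ cancels, yielding $|u_1(x,t)|\leq C_0(1+\|x\|_p^{2\lambda})$ with $C_0$ independent of $t$. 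For the uniform local constancy, I would observe that $Z(\cdot,t)$ is locally constant with an exponent of local constancy that does not depend on $t$ (this follows from Proposition \ref{Prop1} (iii), since $Z(x+\xi,t)=Z(x,t)$ whenever $\|\xi\|_p<p^{-1}\|x\|_p$, and more carefully from the structure of $Z_t^{(M)}$ near $x=0$), so translating $x$ by a sufficiently small vector leaves the whole integrand unchanged; hence $u_1(\cdot,t)$ is locally constant with an exponent independent of $t$. Thus $u_1\in\mathfrak{M}_{2\lambda}$ uniformly in $t$.

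For part (ii), the issue is to justify differentiation under the integral sign. I would fix $t_0>0$ and work on a compact interval $[t_0-\delta,t_0+\delta]\subset(0,\infty)$. The candidate for $\partial u_1/\partial t$ is $\int_{\mathbb{Q}_p^4}\frac{\partial Z}{\partial t}(x-y,t)\varphi(y)\,d^4y$, and this integral converges absolutely: by Proposition \ref{Prop2} (iii), $\bigl|\frac{\partial Z}{\partial t}(x-y,t)\bigr|\leq C_2(t^{1/2\alpha}+\|x-y\|_p)^{-2\alpha-4}$, and multiplying by $1+\|y\|_p^{2\lambda}$ and integrating, the Assertion from Lemma \ref{lemma2} again gives a finite bound, uniform for $t$ in the compact interval (here one uses that $t^{1/2\alpha}\geq(t_0-\delta)^{1/2\alpha}>0$ so the bound $b^{-2\alpha}$ stays controlled). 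This uniform-in-$t$ integrable majorant is exactly what is needed to invoke the standard theorem on differentiation of an integral depending on a parameter (dominated convergence applied to difference quotients), using that $t\mapsto Z(x-y,t)$ is $C^1$ on $(0,\infty)$ by Proposition \ref{Prop2} (ii). This yields the claimed formula for $t>0$.

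The main obstacle is the uniform domination in part (ii): one must be careful that the bound from Proposition \ref{Prop2} (iii) is applied with $t$ ranging over a neighborhood of $t_0$ bounded away from $0$, so that $t^{1/2\alpha}$ does not degenerate and the majorant $C_2\,(t^{1/2\alpha}+\|x-y\|_p)^{-2\alpha-4}(1+\|y\|_p^{2\lambda})$ can be replaced, for the purpose of domination, by the $t$-independent function obtained at $t=t_0-\delta$ on the region where this is monotone, or simply by noting that $(t^{1/2\alpha}+\|x-y\|_p)^{-2\alpha-4}\leq \|x-y\|_p^{-2\alpha-4}$ away from a neighborhood of $y=x$ and is bounded near $y=x$ by $(t_0-\delta)^{-2\alpha-4}$; combined with local constancy of $\varphi$ this produces an $L^1$ majorant independent of $t$. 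Everything else is a routine application of the estimates already established in Propositions \ref{Prop2} and the integral Assertion borrowed from \cite{R-Zu}.
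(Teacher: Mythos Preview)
Your overall strategy matches the paper's: for (i) the paper simply says ``similar to Lemma~\ref{lemma2}'', meaning exactly the bound via Proposition~\ref{Prop2}(i) combined with the Assertion (\ref{Key_enq}), and for (ii) the paper writes the difference quotient, applies the mean value theorem to get $\frac{\partial Z}{\partial t}(x-y,\tau)$ with $\tau$ between $t$ and $t+h$, and then invokes dominated convergence via Proposition~\ref{Prop2}(iii). Your treatment of (ii) is the same argument with slightly more care about the majorant.

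There is, however, a small but genuine slip in your local constancy argument for (i). You claim that translating $x$ by a small $\delta$ ``leaves the whole integrand unchanged'' because $Z(\cdot,t)$ is locally constant. But $Z_t$ does \emph{not} have a global exponent of local constancy independent of the point: Proposition~\ref{Prop1}(iii) only gives $Z(x+\xi,t)=Z(x,t)$ when $\|\xi\|_p<p^{-1}\|x\|_p$, so the exponent degenerates near $x=0$, and indeed $Z_t$ is the Fourier transform of a non-compactly-supported $L^1$ function, hence not a test function. The correct (and simpler) route, which is what the paper uses implicitly by pointing to Lemma~\ref{lemma2}, is to exploit the local constancy of $\varphi$: writing $u_1(x+\delta,t)=\int Z(x+\delta-y,t)\varphi(y)\,d^4y$ and changing variables $y\mapsto y+\delta$ gives $\int Z(x-y,t)\varphi(y+\delta)\,d^4y=u_1(x,t)$ whenever $\|\delta\|_p$ is at most the exponent of local constancy of $\varphi$. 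This shows directly that $u_1(\cdot,t)$ inherits $\varphi$'s exponent of local constancy, uniformly in $t$.
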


\begin{proof}
(i) The proof is similar to that of Lemma \ref{lemma2}.

(ii) By Proposition \ref{Prop2} (ii),
\begin{align*}
\lim_{h\rightarrow0}\frac{u_{1}(x,t+h)-u_{1}(x,t)}{h}  &  =\lim_{h\rightarrow
0}%
%TCIMACRO{\dint \limits_{\mathbb{Q}_{p}^{4}}}%
%BeginExpansion
{\displaystyle\int\limits_{\mathbb{Q}_{p}^{4}}}
%EndExpansion
\left[  \frac{Z(x-y,t+h)-Z\left(  x-y,t\right)  }{h}\right]  \varphi
(y)d^{4}y\\
&  =\lim_{h\rightarrow0}%
%TCIMACRO{\dint \limits_{\mathbb{Q}_{p}^{4}}}%
%BeginExpansion
{\displaystyle\int\limits_{\mathbb{Q}_{p}^{4}}}
%EndExpansion
\frac{\partial Z}{\partial t}(x-y,\tau)\varphi(y)d^{4}y
\end{align*}
where $\tau$ is between $t$ and $t+h$. Now the result follows from Proposition
\ref{Prop2} (iii) by applying the Dominated Convergence Theorem.
\end{proof}

\begin{lemma}
\label{lemma6}Assume that $\lambda<\gamma\leq\alpha$. Then
\[
(\mathbf{f}(\partial,\gamma)u_{1})(x,t)=%
%TCIMACRO{\dint \limits_{\mathbb{Q}_{p}^{4}}}%
%BeginExpansion
{\displaystyle\int\limits_{\mathbb{Q}_{p}^{4}}}
%EndExpansion
(\mathbf{f}(\partial,\gamma)Z_{t})(x-y)\varphi(y)d^{4}y\text{ for }t>0.
\]

\end{lemma}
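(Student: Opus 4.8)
The plan is to commute the pseudodifferential operator $\mathbf{f}(\partial,\gamma)$ with the integral $\int_{\mathbb{Q}_p^4} Z(x-y,t)\varphi(y)\,d^4y$, i.e. to justify differentiation under the integral sign in the non-local sense provided by formula (\ref{Ext_oper}). First I would recall that, since $\varphi\in\mathfrak{M}_{2\lambda}\subset Dom(\mathbf{f})$ for $\lambda<\alpha$, and since $Z_t(\cdot)\in L^\rho$ for all $\rho$ (Corollary \ref{cor2} (ii)) with the Gaussian-type bound from Proposition \ref{Prop2} (i), the convolution $u_1(x,t)=\int Z(x-y,t)\varphi(y)\,d^4y$ is a well-defined locally constant function lying in $\mathfrak{M}_{2\lambda}$ uniformly in $t$ by Lemma \ref{lemma5} (i); in particular $u_1(\cdot,t)\in Dom(\mathbf{f})$. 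Hence by (\ref{Ext_oper}) applied to $u_1(\cdot,t)$,
\[
(\mathbf{f}(\partial,\gamma)u_1)(x,t)=\frac{1-p^{\gamma}}{1-p^{-\gamma-2}}\int\limits_{\mathbb{Q}_p^4}\frac{u_1(x-\xi,t)-u_1(x,t)}{|f(\xi)|_p^{\gamma+2}}\,d^4\xi.
\]

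Next I would substitute the definition of $u_1$ and interchange the order of integration (Fubini) to pull the $\xi$-integral inside the $y$-integral, obtaining formally
\[
(\mathbf{f}(\partial,\gamma)u_1)(x,t)=\int\limits_{\mathbb{Q}_p^4}\left(\frac{1-p^{\gamma}}{1-p^{-\gamma-2}}\int\limits_{\mathbb{Q}_p^4}\frac{Z(x-\xi-y,t)-Z(x-y,t)}{|f(\xi)|_p^{\gamma+2}}\,d^4\xi\right)\varphi(y)\,d^4y,
\]
and the inner bracket is exactly $(\mathbf{f}(\partial,\gamma)Z_t)(x-y)$ by (\ref{Ext_oper}), provided $Z_t(\cdot-y)$ satisfies condition (\ref{condicion1}) — which it does since $Z_t\in\mathfrak{M}_0$ (it is bounded by Proposition \ref{Prop2} (i) and locally constant), so the extension lemma \cite[Lemma 4.1]{C-Zu1} applies. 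This gives the claimed identity for those $x$ where $(\mathbf{f}(\partial,\gamma)Z_t)(x-y)$ is defined; since it is defined for all $x-y\neq 0$ and the singular set $\{y=x\}$ has measure zero, the right-hand integral makes sense, and one uses Proposition \ref{Prop4} for the bound $|(\mathbf{f}(\partial,\gamma)Z_t)(x-y)|\le C(t^{1/2\alpha}+\|x-y\|_p)^{-2\gamma-4}$ together with $|\varphi(y)|\le C(1+\|y\|_p^{2\lambda})$ and the Assertion (\ref{Key_enq}) (valid since $\lambda<\gamma\le\alpha<2\alpha$) to see that the right-hand side is finite and in fact lies in $\mathfrak{M}_{2\lambda}$.

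The main obstacle is justifying the Fubini interchange, because the integrand $\big(Z(x-\xi-y,t)-Z(x-y,t)\big)|f(\xi)|_p^{-\gamma-2}$ is not absolutely integrable over $\mathbb{Q}_p^4\times\mathbb{Q}_p^4$ near the diagonal $\xi=0$: the difference only decays like $\|\xi\|_p$ (by local constancy of $Z_t$ for small $\xi$, via Proposition \ref{Prop1} (iii) and Proposition \ref{Prop2} (i)) while $|f(\xi)|_p^{-\gamma-2}\sim\|\xi\|_p^{-2\gamma-4}$, and $\gamma+2$ can exceed $1$. To handle this I would split the $\xi$-integral at $\|\xi\|_p\le p^{-1}\|x-y\|_p$ versus $\|\xi\|_p> p^{-1}\|x-y\|_p$: on the inner ball $Z(x-\xi-y,t)-Z(x-y,t)=0$ by Proposition \ref{Prop1} (iii), so that region contributes nothing and no singularity arises; on the outer region the integrand is absolutely integrable — one bounds $|Z|$ by Proposition \ref{Prop2} (i) and $|f(\xi)|_p^{-\gamma-2}$ by $\|\xi\|_p^{-2\gamma-4}$, getting a finite double integral by (\ref{Key_enq}) — so Fubini applies there legitimately. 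Recombining, and noting that for fixed $x\neq y$ the formula (\ref{Ext_oper}) for $(\mathbf{f}(\partial,\gamma)Z_t)(x-y)$ likewise reduces to the outer-region integral by Proposition \ref{Prop1} (i), identifies the inner bracket with $(\mathbf{f}(\partial,\gamma)Z_t)(x-y)$ and completes the proof.
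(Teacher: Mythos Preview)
Your strategy—write $(\mathbf{f}(\partial,\gamma)u_1)(x,t)$ via (\ref{Ext_oper}) and swap the order of integration—is natural, and your observation that the integrand vanishes on the inner region $\|\xi\|_p< p^{-1}\|x-y\|_p$ (by Proposition~\ref{Prop1}~(iii)) is correct and useful. The gap is in the Fubini step on the outer region: the double integral there is \emph{not} absolutely integrable when $\gamma\ge 2$. Indeed, after restricting to $\|\xi\|_p\ge \|x-y\|_p$ and bounding each $|Z|$ term separately, the $\xi$-integral of $\|\xi\|_p^{-2\gamma-4}$ over $\{\|\xi\|_p\ge\|x-y\|_p\}$ produces a factor $\sim\|x-y\|_p^{-2\gamma}$, and $\int_{\|x-y\|_p<1}\|x-y\|_p^{-2\gamma}\,d^4y$ diverges in $\mathbb{Q}_p^4$ as soon as $2\gamma\ge 4$. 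The Assertion~(\ref{Key_enq}) does not help here: it controls integrals of $(b+\|\cdot\|_p)^{-2\alpha-4}$ against polynomial growth, not against the singular weight $\|\cdot\|_p^{-2\gamma}$ near the origin. Since the lemma is stated for all $\lambda<\gamma\le\alpha$ and $\alpha$ is unrestricted from above, your argument as written only covers the sub-range $\gamma<2$.

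The paper sidesteps this difficulty by truncating the $\xi$-integral at a \emph{fixed} level $\|\xi\|_p>p^{-L}$ (so that Fubini is trivially justified for each $L$) and then passing to the limit $L\to\infty$. The limit is handled by splitting the $y$-integral at a fixed radius $\|x-y\|_p\gtrless p^{-M}$: on $\|x-y\|_p\ge p^{-M}$ the truncated inner integral already equals $(\mathbf{f}(\partial,\gamma)Z_t)(x-y)$ once $L>M+1$ (again by Proposition~\ref{Prop1}~(iii)); on $\|x-y\|_p<p^{-M}$ one swaps back and forth by Fubini (now over a bounded $y$-region and truncated $\xi$-region) and takes the limit directly. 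To repair your proof, replace the $y$-dependent splitting of the $\xi$-integral by this fixed truncation-and-limit scheme.
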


\begin{proof}
By Lemma \ref{lemma5} (i), $u_{1}(x,t)$ belongs to the domain of
$\mathbf{f}(\partial,\gamma)$ for $t>0$ and for $\lambda<\gamma\leq\alpha$,
then for any $L\in\mathbb{N}$, the following integral exists:
\begin{align*}
&  \frac{1-p^{\gamma}}{1-p^{-\gamma-2}}%
%TCIMACRO{\dint \limits_{||y||_{p}>p^{-L}}}%
%BeginExpansion
{\displaystyle\int\limits_{||y||_{p}>p^{-L}}}
%EndExpansion
|f(y)|_{p}^{-\gamma-2}\left[  u_{1}(x-y,t)-u_{1}(x,t)\right]  d^{4}y\\
&  =\frac{1-p^{\gamma}}{1-p^{-\gamma-2}}%
%TCIMACRO{\dint \limits_{||y||_{p}>p^{-L}}}%
%BeginExpansion
{\displaystyle\int\limits_{||y||_{p}>p^{-L}}}
%EndExpansion
|f(y)|_{p}^{-\gamma-2}\left[  \int_{\mathbb{Q}_{p}^{4}}\left[  Z_{t}%
(x-y-\xi)-Z_{t}(x-\xi)\right]  \varphi(\xi)d^{4}\xi\right]  d^{4}y.
\end{align*}
By using Fubini's Theorem, see (\ref{Zuniga_ineq}), Proposition \ref{Prop2}
(i),
\begin{align}
&
%TCIMACRO{\dint \limits_{\mathbb{Q}_{p}^{4}}}%
%BeginExpansion
{\displaystyle\int\limits_{\mathbb{Q}_{p}^{4}}}
%EndExpansion
\left[  \frac{1-p^{\gamma}}{1-p^{-\gamma-2}}%
%TCIMACRO{\dint \limits_{||y||_{p}>p^{-L}}}%
%BeginExpansion
{\displaystyle\int\limits_{||y||_{p}>p^{-L}}}
%EndExpansion
|f(y)|_{p}^{-\gamma-2}\left[  Z_{t}(x-y-\xi)-Z_{t}(x-\xi)\right]
d^{4}y\right]  \varphi(\xi)d^{4}\xi\nonumber\\
&  =:%
%TCIMACRO{\dint \limits_{\mathbb{Q}_{p}^{4}}}%
%BeginExpansion
{\displaystyle\int\limits_{\mathbb{Q}_{p}^{4}}}
%EndExpansion
\varphi(\xi)Z_{t}^{\left(  \gamma,L\right)  }(x-\xi)d^{4}\xi.
\label{integral0}%
\end{align}
By fixing a positive integer $M$, the last integral in (\ref{integral0})\ can
be expressed as
\begin{equation}%
%TCIMACRO{\dint \limits_{||x-\xi||_{p}\geq p^{-M}}}%
%BeginExpansion
{\displaystyle\int\limits_{||x-\xi||_{p}\geq p^{-M}}}
%EndExpansion
\varphi(\xi)Z_{t}^{\left(  \gamma,L\right)  }(x-\xi)d^{4}\xi+%
%TCIMACRO{\dint \limits_{||x-\xi||_{p}<p^{-M}}}%
%BeginExpansion
{\displaystyle\int\limits_{||x-\xi||_{p}<p^{-M}}}
%EndExpansion
\varphi(\xi)Z_{t}^{\left(  \gamma,L\right)  }(x-\xi)d^{4}\xi.
\label{integral1}%
\end{equation}
note that if $||x||_{p}\geq p^{-M}$ and $M<L-1$, then, by Proposition
\ref{Prop1} (iii), $Z_{t}^{\left(  \gamma,L\right)  }=(\mathbf{f}%
(\partial,\gamma)Z_{t})(x)$, and
\begin{align*}
&  \lim_{L\rightarrow\infty}\frac{1-p^{\gamma}}{1-p^{-\gamma-2}}%
%TCIMACRO{\dint \limits_{||y||_{p}>p^{-L}}}%
%BeginExpansion
{\displaystyle\int\limits_{||y||_{p}>p^{-L}}}
%EndExpansion
|f(y)|_{p}^{-\gamma-2}\left[  u_{1}(x-y,t)-u_{1}(x,t)\right]  d^{4}y\\
&  =%
%TCIMACRO{\dint \limits_{||x-\xi||_{p}\geq p^{-M}}}%
%BeginExpansion
{\displaystyle\int\limits_{||x-\xi||_{p}\geq p^{-M}}}
%EndExpansion
\varphi(\xi)(\mathbf{f}(\partial,\gamma)Z_{t})(x-\xi)d^{4}\xi+\lim
_{L\rightarrow\infty}%
%TCIMACRO{\dint \limits_{||x-\xi||_{p}<p^{-M}}}%
%BeginExpansion
{\displaystyle\int\limits_{||x-\xi||_{p}<p^{-M}}}
%EndExpansion
\varphi(\xi)Z_{t}^{\left(  \gamma,L\right)  }(x-\xi)d^{4}\xi,
\end{align*}
for $M<L-1$. Now by using twice Fubini's theorem, see (\ref{Zuniga_ineq}),
Proposition \ref{Prop2} (i), and Proposition \ref{Prop1} (iii), we have
\begin{align*}
&  \lim_{L\rightarrow\infty}%
%TCIMACRO{\dint \limits_{||x-\xi||_{p}<p^{-M}}}%
%BeginExpansion
{\displaystyle\int\limits_{||x-\xi||_{p}<p^{-M}}}
%EndExpansion
\varphi(\xi)Z_{t}^{\left(  \gamma,L\right)  }(x-\xi)d^{4}\xi=\lim
_{L\rightarrow\infty}%
%TCIMACRO{\dint \limits_{||x-\xi||_{p}<p^{-M}}}%
%BeginExpansion
{\displaystyle\int\limits_{||x-\xi||_{p}<p^{-M}}}
%EndExpansion
\varphi(\xi)\times\\
&  \left[  \frac{1-p^{\gamma}}{1-p^{-\gamma-2}}%
%TCIMACRO{\dint \limits_{||y||_{p}>p^{-L}}}%
%BeginExpansion
{\displaystyle\int\limits_{||y||_{p}>p^{-L}}}
%EndExpansion
|f(y)|_{p}^{-\gamma-2}\left[  Z_{t}(x-y-\xi)-Z_{t}(x-\xi)\right]
d^{4}y\right]  d^{4}\xi\\
&  =\lim_{L\rightarrow\infty}%
%TCIMACRO{\dint \limits_{||y||_{p}>p^{-L}}}%
%BeginExpansion
{\displaystyle\int\limits_{||y||_{p}>p^{-L}}}
%EndExpansion
|f(y)|_{p}^{-\gamma-2}\times\\
&  \left[  \frac{1-p^{\gamma}}{1-p^{-\gamma-2}}%
%TCIMACRO{\dint \limits_{||x-\xi||_{p}<p^{-M}}}%
%BeginExpansion
{\displaystyle\int\limits_{||x-\xi||_{p}<p^{-M}}}
%EndExpansion
\varphi(\xi)\left[  Z_{t}(x-y-\xi)-Z_{t}(x-\xi)\right]  d^{4}\xi\right]
d^{4}y\\
&  =%
%TCIMACRO{\dint \limits_{\mathbb{Q}_{p}^{4}}}%
%BeginExpansion
{\displaystyle\int\limits_{\mathbb{Q}_{p}^{4}}}
%EndExpansion
|f(y)|_{p}^{-\gamma-2}\left[  \frac{1-p^{\gamma}}{1-p^{-\gamma-2}}%
%TCIMACRO{\dint \limits_{||x-\xi||_{p}<p^{-M}}}%
%BeginExpansion
{\displaystyle\int\limits_{||x-\xi||_{p}<p^{-M}}}
%EndExpansion
\varphi(\xi)\left[  Z_{t}(x-y-\xi)-Z_{t}(x-\xi)\right]  d^{4}\xi\right]
d^{4}y\\
&  =\int_{||x-\xi||_{p}<p^{-M}}\varphi(\xi)\left[  \frac{1-p^{\gamma}%
}{1-p^{-\gamma-2}}%
%TCIMACRO{\dint \limits_{\mathbb{Q}_{p}^{4}}}%
%BeginExpansion
{\displaystyle\int\limits_{\mathbb{Q}_{p}^{4}}}
%EndExpansion
|f(y)|_{p}^{-\gamma-2}\left[  Z_{t}(x-y-\xi)-Z_{t}(x-\xi)\right]
d^{4}y\right]  d^{4}\xi\\
&  =\int_{||x-\xi||_{p}<p^{-M}}(\mathbf{f}(\partial,\gamma)Z_{t}%
)(x-\xi)\varphi(\xi)d^{4}\xi.
\end{align*}

\end{proof}

\begin{lemma}
\label{lemma7}If $\lambda<\gamma\leq\alpha$, then
\[
(\mathbf{f}(\partial,\gamma)u_{2})(x,t,\tau)=%
%TCIMACRO{\dint \limits_{\tau}^{t}}%
%BeginExpansion
{\displaystyle\int\limits_{\tau}^{t}}
%EndExpansion
\left(
%TCIMACRO{\dint \limits_{\mathbb{Q}_{p}^{4}}}%
%BeginExpansion
{\displaystyle\int\limits_{\mathbb{Q}_{p}^{4}}}
%EndExpansion
(\mathbf{f}(\partial,\gamma)Z)(x-y,t-\theta)g(y,\theta)d^{4}y\right)
d\theta\text{ for }t>0\text{.}%
\]

\end{lemma}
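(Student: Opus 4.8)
The plan is to mirror the proof of Lemma \ref{lemma6}, reducing the identity to that lemma applied slice-by-slice in the outer time variable $\theta$, and then to $\mathbf{f}(\partial,\gamma)$ commuting with the integral $\int_{\tau}^{t}d\theta$. First I would note that, by Lemma \ref{lemma2}, $u_{2}(\cdot,t,\tau)\in\mathfrak{M}_{2\lambda}$ uniformly in $t$ and $\tau$; since $\lambda<\gamma$, this places $u_{2}(\cdot,t,\tau)$ in $Dom(\mathbf{f})$, so the left-hand side equals the truncated form of (\ref{Ext_oper}), namely $\lim_{L\to\infty}\tfrac{1-p^{\gamma}}{1-p^{-\gamma-2}}\int_{\|y\|_{p}>p^{-L}}|f(y)|_{p}^{-\gamma-2}[u_{2}(x-y,t,\tau)-u_{2}(x,t,\tau)]\,d^{4}y$.

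Next, for fixed $\theta\in[\tau,t)$ set $w_{\theta}(x):=\int_{\mathbb{Q}_{p}^{4}}Z(x-y,t-\theta)g(y,\theta)\,d^{4}y$. Since $t-\theta>0$ and $g(\cdot,\theta)\in\mathfrak{M}_{2\lambda}$ uniformly in $\theta$, the function $w_{\theta}$ is an instance of the function $u_{1}$ of Lemmas \ref{lemma5} and \ref{lemma6} (with initial datum $g(\cdot,\theta)$ and time $t-\theta$). Hence $w_{\theta}\in\mathfrak{M}_{2\lambda}$ uniformly in $\theta$ by Lemma \ref{lemma5}(i), and by Lemma \ref{lemma6}, $(\mathbf{f}(\partial,\gamma)w_{\theta})(x)=\int_{\mathbb{Q}_{p}^{4}}(\mathbf{f}(\partial,\gamma)Z_{t-\theta})(x-y)g(y,\theta)\,d^{4}y$ for every $\theta<t$. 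Since $u_{2}(x,t,\tau)=\int_{\tau}^{t}w_{\theta}(x)\,d\theta$, the assertion is exactly $\mathbf{f}(\partial,\gamma)(\int_{\tau}^{t}w_{\theta}\,d\theta)=\int_{\tau}^{t}\mathbf{f}(\partial,\gamma)w_{\theta}\,d\theta$.

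To see the right-hand side is finite and to prepare the interchange, I would use Corollary \ref{cor4} to rewrite $(\mathbf{f}(\partial,\gamma)w_{\theta})(x)=\int(\mathbf{f}(\partial,\gamma)Z_{t-\theta})(x-y)[g(y,\theta)-g(x,\theta)]\,d^{4}y$. If $M$ is the (uniform) exponent of local constancy of $g$, the integrand vanishes for $\|x-y\|_{p}\le p^{M}$, and on $\|x-y\|_{p}>p^{M}$ Proposition \ref{Prop4} gives $|(\mathbf{f}(\partial,\gamma)Z_{t-\theta})(x-y)|\le C\|x-y\|_{p}^{-2\gamma-4}$, uniformly in $\theta$. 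Using $\|y\|_{p}^{2\lambda}\le\|x\|_{p}^{2\lambda}+\|x-y\|_{p}^{2\lambda}$ together with the convergence of $\int_{\|z\|_{p}>p^{M}}\|z\|_{p}^{-2\gamma-4}\,d^{4}z$ and of $\int_{\|z\|_{p}>p^{M}}\|z\|_{p}^{-2\gamma-4+2\lambda}\,d^{4}z$ (the latter because $\gamma>\lambda$), one gets $|(\mathbf{f}(\partial,\gamma)w_{\theta})(x)|\le C(1+\|x\|_{p}^{2\lambda})$ uniformly in $\theta\in[\tau,t)$, so $\int_{\tau}^{t}|(\mathbf{f}(\partial,\gamma)w_{\theta})(x)|\,d\theta<\infty$. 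For the interchange itself, I would substitute $u_{2}=\int_{\tau}^{t}w_{\theta}\,d\theta$ into the truncated expression above and apply Fubini's theorem on the finite region $\{\|y\|_{p}>p^{-L}\}\times[\tau,t]$; this is legitimate because there $|f(y)|_{p}^{-\gamma-2}|w_{\theta}(x-y)-w_{\theta}(x)|\le C|f(y)|_{p}^{-\gamma-2}(2+\|x-y\|_{p}^{2\lambda}+\|x\|_{p}^{2\lambda})$ is integrable in $y$ over $\|y\|_{p}>p^{-L}$ (exponent $-2\gamma-4+2\lambda<-4$ since $\gamma>\lambda$) and the $\theta$-interval is bounded. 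This yields $\int_{\tau}^{t}[\tfrac{1-p^{\gamma}}{1-p^{-\gamma-2}}\int_{\|y\|_{p}>p^{-L}}|f(y)|_{p}^{-\gamma-2}(w_{\theta}(x-y)-w_{\theta}(x))\,d^{4}y]\,d\theta$, whose inner bracket tends, as $L\to\infty$, to $(\mathbf{f}(\partial,\gamma)w_{\theta})(x)$ by the computation in Lemma \ref{lemma6}; passing the limit under $\int_{\tau}^{t}$ by dominated convergence gives the claimed identity.

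The main obstacle is precisely this last step: exhibiting a dominating function for the truncated $x$-integral that is uniform in both $L$ and $\theta$. This requires redoing the bookkeeping of the proof of Lemma \ref{lemma6} — splitting the $\xi$-integration into $\|x-\xi\|_{p}\ge p^{-M}$ (where the truncated kernel coincides with $(\mathbf{f}(\partial,\gamma)Z_{t-\theta})(x-\xi)$ by Proposition \ref{Prop1}(iii)) and $\|x-\xi\|_{p}<p^{-M}$, and then combining the decay of Proposition \ref{Prop4}, the cancellation of Corollary \ref{cor4}, and the condition $\gamma>\lambda$ — but now with all constants taken independent of $L$ and of $\theta\in[\tau,t)$, so that the uniform bound of the previous paragraph serves as the envelope. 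Once this uniform control is in place, the dominated convergence argument and Fubini's theorem combine to give the asserted formula.
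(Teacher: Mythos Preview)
Your approach is correct and takes a genuinely different route from the paper. The paper works on the Fourier side: it introduces a time cutoff $u_{2,h}$ (integrating only over $[\tau,t-h]$), expresses the truncated kernel $Z^{(\gamma,L)}$ as an oscillatory integral with symbol $P_L(\xi)$, proves $|P_L(\xi)|\le C\|\xi\|_p^{2\gamma}$ by the same mechanism as Lemma~\ref{lemma_0}, and deduces a bound $|Z^{(\gamma,L)}(x,t)|\le C'$ uniform in $L$ (for $t\ge h$) that drives the passage $L\to\infty$ via dominated convergence. You instead recognize each time-slice $w_\theta$ as an instance of the potential $u_1$ from Lemma~\ref{lemma6}, invoke that lemma as a black box, and manufacture the $\theta$-uniform envelope from Corollary~\ref{cor4} together with the uniform local constancy of $g$ and Proposition~\ref{Prop4}. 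This buys you two things: you avoid the Fourier-side $P_L$ estimate entirely, and you need no auxiliary cutoff $h$, since the subtraction $g(y,\theta)-g(x,\theta)$ already removes the would-be singularity at $\theta=t$.

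One remark on the obstacle you flag in your last paragraph: it is lighter than you fear. Because $w_\theta$ inherits from $g$ a local-constancy exponent $l$ that is \emph{independent of $\theta$}, the truncated integral
\[
\frac{1-p^{\gamma}}{1-p^{-\gamma-2}}\int_{\|y\|_p>p^{-L}}|f(y)|_p^{-\gamma-2}\bigl[w_\theta(x-y)-w_\theta(x)\bigr]\,d^4y
\]
actually \emph{equals} $(\mathbf{f}(\partial,\gamma)w_\theta)(x)$ for every $L>-l$ and every $\theta\in[\tau,t)$; the sequence in $L$ stabilizes, and the bound $|(\mathbf{f}(\partial,\gamma)w_\theta)(x)|\le C(1+\|x\|_p^{2\lambda})$ you already derived serves directly as the dominating function. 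Even without noticing this stabilization, the cruder estimate obtained from $w_\theta\in\mathfrak{M}_{2\lambda}$ uniformly (namely bounding the truncated integral by the absolute integral over $\|y\|_p>p^{l}$) already gives an $L$-independent envelope. Either way, there is no need to reopen the proof of Lemma~\ref{lemma6}.
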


\begin{proof}
Let
\[
u_{2,h}(x,t,\tau)=:%
%TCIMACRO{\dint \limits_{\tau}^{t-h}}%
%BeginExpansion
{\displaystyle\int\limits_{\tau}^{t-h}}
%EndExpansion
\left(
%TCIMACRO{\dint \limits_{\mathbb{Q}_{p}^{4}}}%
%BeginExpansion
{\displaystyle\int\limits_{\mathbb{Q}_{p}^{4}}}
%EndExpansion
Z(x-y,t-\theta)g(y,\theta)d^{4}y\right)  d\theta
\]
where $h$ is a small positive number such that $0<h<t-\tau$. Set
\[
Z^{\left(  \gamma,L\right)  }(x,t)=\frac{1-p^{\gamma}}{1-p^{-\gamma-2}}%
%TCIMACRO{\dint \limits_{||y||_{p}>p^{-L}}}%
%BeginExpansion
{\displaystyle\int\limits_{||y||_{p}>p^{-L}}}
%EndExpansion
|f(y)|_{p}^{-\gamma-2}\left[  Z(x-y,t)-Z(x,t)\right]  d^{4}y.
\]
By the Fubini Theorem
\begin{align}
&  \frac{1-p^{\gamma}}{1-p^{-\gamma-2}}%
%TCIMACRO{\dint \limits_{||y||_{p}>p^{-L}}}%
%BeginExpansion
{\displaystyle\int\limits_{||y||_{p}>p^{-L}}}
%EndExpansion
|f(y)|_{p}^{-\gamma-2}\left[  u_{2,h}(x-y,t,\tau)-u_{2,h}(x,t,\tau)\right]
d^{4}y\nonumber\\
&  =%
%TCIMACRO{\dint \limits_{\tau}^{t-h}}%
%BeginExpansion
{\displaystyle\int\limits_{\tau}^{t-h}}
%EndExpansion%
%TCIMACRO{\dint \limits_{\mathbb{Q}_{p}^{4}}}%
%BeginExpansion
{\displaystyle\int\limits_{\mathbb{Q}_{p}^{4}}}
%EndExpansion
Z^{\left(  \gamma,L\right)  }(x-\xi,t-\theta)g(\xi,\theta)d^{4}\xi d\theta.
\label{Eq_4_20}%
\end{align}
Note that
\begin{multline*}
Z^{\left(  \gamma,L\right)  }(x,t)=\frac{1-p^{\gamma}}{1-p^{-\gamma-2}}%
%TCIMACRO{\dint \limits_{||y||_{p}>p^{-L}}}%
%BeginExpansion
{\displaystyle\int\limits_{||y||_{p}>p^{-L}}}
%EndExpansion
\times\\
\left[
%TCIMACRO{\dint \limits_{\mathbb{Q}_{p}^{4}}}%
%BeginExpansion
{\displaystyle\int\limits_{\mathbb{Q}_{p}^{4}}}
%EndExpansion
|f(y)|_{p}^{-\gamma-2}\chi(\xi\cdot x)\left[  \chi(-\xi\cdot y)-1\right]
e^{-\kappa t|f^{\circ}(\xi)|_{p}^{\alpha}}d^{4}\xi\right]  d^{4}y\\
=%
%TCIMACRO{\dint \limits_{\mathbb{Q}_{p}^{4}}}%
%BeginExpansion
{\displaystyle\int\limits_{\mathbb{Q}_{p}^{4}}}
%EndExpansion
\chi(\xi\cdot x)e^{-\kappa t|f^{\circ}(\xi)|_{p}^{\alpha}}\left[
\frac{1-p^{\gamma}}{1-p^{-\gamma-2}}%
%TCIMACRO{\dint \limits_{||y||_{p}>p^{-L}}}%
%BeginExpansion
{\displaystyle\int\limits_{||y||_{p}>p^{-L}}}
%EndExpansion
|f(y)|_{p}^{-\gamma-2}\left[  \chi(-\xi\cdot y)-1\right]  d^{4}y\right]
d^{4}\xi\\
=%
%TCIMACRO{\dint \limits_{\mathbb{Q}_{p}^{4}}}%
%BeginExpansion
{\displaystyle\int\limits_{\mathbb{Q}_{p}^{4}}}
%EndExpansion
\chi(\xi\cdot x)e^{-at|f^{\circ}(\xi)|_{p}^{\alpha}}P_{L}(\xi)d^{4}\xi,
\end{multline*}
where
\[
{P_{L}(\xi)=\frac{1-p^{\gamma}}{1-p^{-\gamma-2}}\int\limits_{||y||_{p}>p^{-L}%
}|f(y)|_{p}^{-\gamma-2}\left[  \chi(-\xi\cdot y)-1\right]  d^{4}y}.
\]
On the other hand, by (\ref{Zuniga_ineq}),
\[
\left\vert P_{L}(\xi)\right\vert \leq B^{-\gamma-2}\left\vert \frac
{1-p^{\gamma}}{1-p^{-\gamma-2}}\right\vert \int\limits_{||y||_{p}>p^{-L}%
}||y||_{p}^{-2\gamma-4}\left\vert \chi(-\xi\cdot y)-1\right\vert d^{4}y,
\]
and by using a similar reasoning to the one used in \cite[p. 142]{Koch}, we
have
\[
\left\vert P_{L}(\xi)\right\vert \leq C||\xi||_{p}^{2\gamma}%
\]
whence
\begin{equation}
\left\vert Z^{\left(  \gamma,L\right)  }(x,t)\right\vert \leq%
%TCIMACRO{\dint \limits_{\mathbb{Q}_{p}^{4}}}%
%BeginExpansion
{\displaystyle\int\limits_{\mathbb{Q}_{p}^{4}}}
%EndExpansion
e^{-\kappa t|f^{\circ}(\xi)|_{p}^{\alpha}}\left\vert P_{L}(\xi)\right\vert
d^{4}\xi\leq C%
%TCIMACRO{\dint \limits_{\mathbb{Q}_{p}^{4}}}%
%BeginExpansion
{\displaystyle\int\limits_{\mathbb{Q}_{p}^{4}}}
%EndExpansion
e^{-\kappa t|f^{\circ}(\xi)|_{p}^{\alpha}}||\xi||_{p}^{2\gamma}d^{4}\leq
C^{\prime}, \label{cota}%
\end{equation}
where $C^{\prime}$ is a positive constant, which not depend on $x,t\geq
h+\tau,L$.

By writing the right-hand side of (\ref{Eq_4_20}) as
\begin{align}
&
%TCIMACRO{\dint \limits_{\tau}^{t-h}}%
%BeginExpansion
{\displaystyle\int\limits_{\tau}^{t-h}}
%EndExpansion%
%TCIMACRO{\dint \limits_{||x-\xi||_{p}\geq p^{-K}}}%
%BeginExpansion
{\displaystyle\int\limits_{||x-\xi||_{p}\geq p^{-K}}}
%EndExpansion
Z^{\left(  \gamma,L\right)  }(x-\xi,t-\theta)g(\xi,\theta)d^{4}\xi
d\theta\label{integral}\\
&  +%
%TCIMACRO{\dint \limits_{\tau}^{t-h}}%
%BeginExpansion
{\displaystyle\int\limits_{\tau}^{t-h}}
%EndExpansion%
%TCIMACRO{\dint \limits_{||x-\xi||_{p}<p^{-K}}}%
%BeginExpansion
{\displaystyle\int\limits_{||x-\xi||_{p}<p^{-K}}}
%EndExpansion
Z^{\left(  \gamma,L\right)  }(x-\xi,t-\theta)g(\xi,\theta)d^{4}\xi
d\theta,\nonumber
\end{align}
where $K$ is a fixed natural number. Now the result follows by taking limit
$L\rightarrow\infty$ in (\ref{Eq_4_20}). Indeed, for the first integral in
(\ref{integral}), if $||x-\xi||_{p}\geq p^{-K}$ and $L>K+1$, then $Z^{\left(
\gamma,L\right)  }(x-\xi,t-\theta)=(\mathbf{f}(\partial,\gamma)Z)(x-\xi
,t-\theta)$. For the second integral in (\ref{integral}), we use (\ref{cota})
and the Dominated Convergence Theorem.
\end{proof}

\subsection{Proof of Theorem \ref{Thm1}}

By Lemmas \ref{lemma2} and \ref{lemma5} (i), $u(x,t)\in\mathfrak{M}_{2\lambda
}$ uniformly with respect to $t$, and by Lemma \ref{lemma3} $u(x,t)$ satisfies
the initial condition. By Lemmas \ref{lemma4}-\ref{lemma7} and Corollary
\ref{cor3}, $u(x,t)$ is a solution of Cauchy problem (\ref{Cauchy_Pr_1}).

\section{\label{Sect.Eq.Var.Coeff.}Parabolic-type equations with variable
coefficients}

Fix $n+1$ positive real numbers satisfying%
\[
0<\alpha_{1}<\alpha_{2}<\cdots<\alpha_{n}<\alpha,
\]
with $\alpha>1$. We also fix $n+2$ functions $a_{k}(x,t)$, $k=0,\ldots,n$ and
$b(x,t)$ from $\mathbb{Q}_{p}^{4}\times\left[  0,T\right]  $ to $\mathbb{R}$,
here $T$ is a fixed positive constant. We assume that $a_{k}(x,t)$,
$k=0,\ldots,n$ and $b(x,t)$ satisfy the following conditions:

\noindent(i) they belong to $\mathfrak{M}_{0}$, with respect to $x$, uniformly
in $t\in\left[  0,T\right]  $;

\noindent(ii) they satisfy the Hölder condition in $t$ with exponent $\nu
\in\left(  0,1\right]  $ uniformly in $x$.

In addition we assume the\ \textit{uniform parabolicity condition}
$a_{0}(x,t)\geq\mu>0$.

We set
\[
\boldsymbol{F}\left(  \partial,\alpha_{1},\alpha_{2},\cdots,\alpha_{n}\right)
:=%
%TCIMACRO{\dsum \limits_{k=1}^{n}}%
%BeginExpansion
{\displaystyle\sum\limits_{k=1}^{n}}
%EndExpansion
a_{k}(x,t)\mathbf{f}\left(  \partial,\alpha_{k}\right)  +b(x,t)I.
\]
Note that $\boldsymbol{F}\left(  \partial,\alpha_{1},\alpha_{2},\cdots
,\alpha_{n}\right)  \mathfrak{M}_{2\lambda}\subset\mathfrak{M}_{2\lambda}$ for
$\lambda<\alpha_{1}$, c.f. \ (\ref{Ext_oper}) .

In this section we study the following initial value problem:
\begin{equation}%
\begin{cases}
\dfrac{\partial u(x,t)}{\partial t}+a_{0}(x,t)(\mathbf{f}(\partial
,\alpha)u)(x,t)+\left(  \boldsymbol{F}\left(  \partial,\alpha_{1},\alpha
_{2},\cdots,\alpha_{n}\right)  u\right)  (x,t)=g(x,t)\\
\\
u(x,0)=\varphi(x)\text{, }x\in\mathbb{Q}_{p}^{4},\ t\in(0,T],
\end{cases}
\label{cauchy_Pr_2}%
\end{equation}
where \ $x\in\mathbb{Q}_{p}^{4},\ t\in(0,T]$, $\varphi\in\mathfrak{M}%
_{2\lambda}$ with $0\leq\lambda<\alpha_{1}$ (if $a_{1}(x,t)=\dots
=a_{n}(x,t)\equiv0$, then we shall assume that $0\leq\lambda<\alpha$), and
$g(x,t)$ is continuous in $(x,t)$, and $g\left(  x,t\right)  \in
\mathfrak{M}_{2\lambda}$ uniformly in $t\in\left[  0,T\right]  $,
$0\leq\lambda<\alpha_{1}$.

In this section we find the solution of the general problem (\ref{cauchy_Pr_2}%
). The technique used here is an adaptation of the classical Archimedean
techniques, see e.g. \cite{Fri}, \cite{I-K-O}. In the $p$-adic setting the
technique was introduced by A. N. Kochubei in \cite{Koch0}. Our presentation
is highly influenced by Kochubei's book \cite{Koch}. The proofs of some
theorems are very similar to the corresponding in \cite{Koch} for this reason
we will omit them.

The first step of the construction of a fundamental solution is to study the
parametrized fundamental solution $Z(x,t,y,\theta)$ for the Cauchy problem:
\[
\left\{
\begin{array}
[c]{l}%
\frac{\partial u(x,t)}{\partial t}+a_{0}(y,\theta)(\mathbf{f}(\partial
,\alpha)u)(x,t)=0\\
\\
u\left(  x,0\right)  =\varphi\left(  x\right)  ,
\end{array}
\right.
\]
where $y\in\mathbb{Q}_{p}^{4}$ and $\theta>0$ are parameters. By the results
of Section \ref{Sect.Heat.Kernel}, we have
\[
Z(x,t,y,\theta)=%
%TCIMACRO{\dint \limits_{\mathbb{Q}_{p}^{4}}}%
%BeginExpansion
{\displaystyle\int\limits_{\mathbb{Q}_{p}^{4}}}
%EndExpansion
\chi(\xi\cdot x)e^{-a_{0}(y,\theta)t|f^{\circ}(\xi)|_{p}^{\alpha}}d^{4}\xi,
\]
and if $x\neq0$, then
\[
{Z(x,t,y,\theta)=\sum_{m=1}^{\infty}\frac{(-1)^{m}}{m!}\left(  \frac
{1-p^{\alpha m}}{1-p^{-\alpha m-2}}\right)  (a_{0}(y,\theta)t)^{m}%
|f(x)|_{p}^{-\alpha m-2},}%
\]
c.f. Proposition \ref{Prop1} (ii). By the Propositions \ref{Prop2},
\ref{Prop3}, \ref{Prop4}, and Corollaries \ref{cor2} (i) and \ref{cor4}, we
have%
\begin{equation}
Z(x,t,y,\theta)\leq Ct(t^{1/2\alpha}+||x||_{p})^{-2\alpha-4},\qquad
x\in\mathbb{Q}_{p}^{4},\quad t>0, \label{fund_sol_par}%
\end{equation}%
\begin{equation}
|(\mathbf{f}(\partial,\gamma)Z)(x,t,y,\theta)|\leq C(t^{1/2\alpha}%
+||x||_{p})^{-2\gamma-4},\qquad x\in\mathbb{Q}_{p}^{4},\quad t>0,
\label{f_oper_par}%
\end{equation}%
\begin{equation}
\dfrac{\partial Z(x,t,y,\theta)}{\partial t}=-a_{0}(y,\theta){%
%TCIMACRO{\dint \limits_{\mathbb{Q}_{p}^{4}}}%
%BeginExpansion
{\displaystyle\int\limits_{\mathbb{Q}_{p}^{4}}}
%EndExpansion
}|f^{\circ}(\eta)|_{p}^{\alpha}e^{-a_{0}(y,\theta)t|f^{\circ}(\eta)|^{\alpha}%
}\chi(x\cdot\eta)d^{4}\eta, \label{parZ_par}%
\end{equation}%
\begin{equation}
\left\vert \dfrac{\partial Z(x,t,y,\theta)}{\partial t}\right\vert \leq
C\left(  t^{1/2\alpha}+||x||_{p}\right)  ^{-2\alpha-4}, \label{est_par_Z-par}%
\end{equation}%
\begin{equation}
(\mathbf{f}(\partial,\gamma)Z)(x,t,y,\theta)=\int_{\mathbb{Q}_{p}^{4}%
}|f^{\circ}(\eta)|_{p}^{\gamma}e^{-a_{0}(y,\theta)t|f^{\circ}(\eta)|^{\alpha}%
}\chi(x\cdot\eta)d^{4}\eta,\quad x\neq0,\quad0<\gamma\leq\alpha,
\label{for_f_par}%
\end{equation}%
\begin{equation}
{%
%TCIMACRO{\dint \limits_{\mathbb{Q}_{p}^{4}}}%
%BeginExpansion
{\displaystyle\int\limits_{\mathbb{Q}_{p}^{4}}}
%EndExpansion
}Z(x,t,y,\theta)d^{4}x=1, \label{vol_Z_par}%
\end{equation}%
\begin{equation}
{%
%TCIMACRO{\dint \limits_{\mathbb{Q}_{p}^{4}}}%
%BeginExpansion
{\displaystyle\int\limits_{\mathbb{Q}_{p}^{4}}}
%EndExpansion
}(\mathbf{f}(\partial,\gamma)Z)(x,t,y,\theta)d^{4}x=0, \label{int_f_par}%
\end{equation}
where the constants do not depend on $y,\theta$.

\begin{lemma}
\label{lemma8A}There exists a positive constant $C$, such that%
\begin{equation}
\left\vert {%
%TCIMACRO{\dint \limits_{\mathbb{Q}_{p}^{4}}}%
%BeginExpansion
{\displaystyle\int\limits_{\mathbb{Q}_{p}^{4}}}
%EndExpansion
}\dfrac{\partial Z(x-y,t,y,\theta)}{\partial t}d^{4}y\right\vert \leq C.
\label{estimacio}%
\end{equation}

\end{lemma}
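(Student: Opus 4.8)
The plan is to exploit the cancellation identity \eqref{int_f_par}, namely $\int_{\mathbb{Q}_p^4}(\mathbf{f}(\partial,\alpha)Z)(x,t,y,\theta)\,d^4x=0$, together with the Hölder continuity of $a_0$ in its last two arguments. First I would rewrite the integrand using Corollary \ref{cor3} in its parametrized form, i.e. $\frac{\partial Z(x-y,t,y,\theta)}{\partial t}=-a_0(y,\theta)(\mathbf{f}(\partial,\alpha)Z)(x-y,t,y,\theta)$, so that the quantity to be bounded becomes
\[
\left\vert\int_{\mathbb{Q}_p^4}a_0(y,\theta)(\mathbf{f}(\partial,\alpha)Z)(x-y,t,y,\theta)\,d^4y\right\vert.
\]
The subtlety is that the $y$ appearing in $a_0(y,\theta)$ is the \emph{same} $y$ as the one shifting the spatial argument $x-y$; if instead the coefficient were frozen, $a_0(x,\theta)$ say, then after the change of variables $z=x-y$ the integral would be $a_0(x,\theta)\int_{\mathbb{Q}_p^4}(\mathbf{f}(\partial,\alpha)Z)(z,t,x,\theta)\,d^4z$, which vanishes identically by \eqref{int_f_par}. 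So the whole estimate rests on comparing $a_0(y,\theta)$ with $a_0(x,\theta)$.

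The key step is therefore the following splitting. Write, for fixed $x,t,\theta$,
\[
\int_{\mathbb{Q}_p^4}\frac{\partial Z(x-y,t,y,\theta)}{\partial t}\,d^4y
=-\int_{\mathbb{Q}_p^4}\bigl[a_0(y,\theta)-a_0(x,\theta)\bigr](\mathbf{f}(\partial,\alpha)Z)(x-y,t,y,\theta)\,d^4y,
\]
where the term with $a_0(x,\theta)$ has been subtracted off at no cost because $\int(\mathbf{f}(\partial,\alpha)Z)(x-y,t,y,\theta)\,d^4y=0$ (substitute $z=x-y$ and invoke \eqref{int_f_par}, noting the constant there is uniform in the parameters $y,\theta$). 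Since $a_0\in\mathfrak{M}_0$ uniformly in $t$, it is in particular bounded and locally constant with a fixed exponent of local constancy, so $|a_0(y,\theta)-a_0(x,\theta)|$ is bounded by a constant and, more importantly, \emph{vanishes} when $\|x-y\|_p$ is small enough (smaller than $p$ to the fixed local-constancy exponent of $a_0$). Hence the integral is really only over $\|x-y\|_p\geq p^{m_0}$ for a fixed integer $m_0$. On that region I bound $|a_0(y,\theta)-a_0(x,\theta)|$ by a constant and apply the parametrized estimate \eqref{f_oper_par} with $\gamma=\alpha$:
\[
\left\vert\int_{\|x-y\|_p\geq p^{m_0}}\bigl[a_0(y,\theta)-a_0(x,\theta)\bigr](\mathbf{f}(\partial,\alpha)Z)(x-y,t,y,\theta)\,d^4y\right\vert
\leq C\int_{\|z\|_p\geq p^{m_0}}\bigl(t^{1/2\alpha}+\|z\|_p\bigr)^{-2\alpha-4}\,d^4z.
\]
The last integral is dominated by $\int_{\|z\|_p\geq p^{m_0}}\|z\|_p^{-2\alpha-4}\,d^4z$, which converges because $2\alpha+4>4$, and its value is a constant independent of $t$, $x$ and $\theta$. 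This yields the uniform bound \eqref{estimacio}.

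The only point requiring care — and the step I expect to be the main obstacle — is justifying that the subtraction is legitimate, i.e. that $\int_{\mathbb{Q}_p^4}(\mathbf{f}(\partial,\alpha)Z)(x-y,t,y,\theta)\,d^4y$ really equals zero and that the manipulation does not introduce a non-integrable singularity at $y=x$. Here one uses that $(\mathbf{f}(\partial,\alpha)Z)(z,t,y,\theta)$ is, for $z\neq 0$, given by the convergent series of Proposition \ref{Prop1}(ii)-type (the parametrized version displayed before \eqref{fund_sol_par}) and is controlled near $z=0$ through the bound \eqref{f_oper_par}, which is integrable since $2\alpha+4>4$; combined with the vanishing of $a_0(y,\theta)-a_0(x,\theta)$ in a fixed neighborhood of $y=x$, there is in fact no singularity at all in the relevant integrand, so Fubini and the change of variables $z=x-y$ are unproblematic. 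Thus the argument reduces, as for the analogous estimate in \cite[Chapter 4]{Koch}, to the cancellation property \eqref{int_f_par} plus the local constancy of the leading coefficient.
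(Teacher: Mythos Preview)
Your splitting step contains a genuine error. You claim that
\[
\int_{\mathbb{Q}_p^4}(\mathbf{f}(\partial,\alpha)Z)(x-y,t,y,\theta)\,d^4y=0
\]
by substituting $z=x-y$ and invoking \eqref{int_f_par}. But after that substitution the integrand becomes $(\mathbf{f}(\partial,\alpha)Z)(z,t,x-z,\theta)$: the \emph{parameter} in the third slot is now $x-z$, which varies with the integration variable. The identity \eqref{int_f_par} asserts vanishing only when the parameters $(y,\theta)$ are held fixed while one integrates over the first variable, so it does not apply here. Consequently the term $a_0(x,\theta)\int(\mathbf{f}(\partial,\alpha)Z)(x-y,t,y,\theta)\,d^4y$ that you subtract is not known to be zero, and your displayed splitting is unjustified.

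The correct subtraction, which is what the paper's reference to \eqref{vol_Z_par}, \eqref{parZ_par}, \eqref{est_par_Z-par} and Kochubei's Lemma 4.5 amounts to, is to freeze the \emph{entire} parameter, not just the coefficient: write
\[
\int_{\mathbb{Q}_p^4}\frac{\partial Z(x-y,t,y,\theta)}{\partial t}\,d^4y
=\int_{\mathbb{Q}_p^4}\left[\frac{\partial Z(x-y,t,y,\theta)}{\partial t}-\frac{\partial Z(x-y,t,x,\theta)}{\partial t}\right]d^4y,
\]
the subtracted term being zero since $\int Z(x-y,t,x,\theta)\,d^4y=1$ for all $t$ by \eqref{vol_Z_par} (with fixed parameter $x$), so its $t$-derivative vanishes. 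Now, because $a_0\in\mathfrak{M}_0$ uniformly in $t$, one has $a_0(y,\theta)=a_0(x,\theta)$ whenever $\|x-y\|_p<p^{m_0}$, and then the two kernels agree identically (same coefficient \emph{and} same parameter). On the region $\|x-y\|_p\geq p^{m_0}$ you bound each term separately by \eqref{est_par_Z-par} and integrate $\|z\|_p^{-2\alpha-4}$ over $\|z\|_p\geq p^{m_0}$, exactly as you did. With this single correction your argument goes through and coincides with the paper's intended proof.
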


\begin{proof}
The proof follows from (\ref{vol_Z_par}), (\ref{parZ_par}),
(\ref{est_par_Z-par}) by using the resoning given in \cite{Koch} for Lemma 4.5.
\end{proof}

Consider the parametrized heat potential
\[
u(x,t,\tau):=\int\limits_{\tau}^{t}\int\limits_{\mathbb{Q}_{p}^{4}%
}Z(x-y,t-\theta,y,\theta)g(y,\theta)d^{4}yd\theta,
\]
with $g\in\mathfrak{M}_{2\lambda},0\leq\lambda<\alpha_{1},$ uniformly with
respect to $\theta$, and continuous in $(y,\theta)$. By using
(\ref{fund_sol_par}) we obtain as Lemma \ref{lemma2} that $u(x,t,\tau)$ is
locally constant and belongs to $\mathfrak{M}_{2\lambda}$ uniformly with
respect to $t,\ \tau$.\newline Also we have
\begin{align}
\frac{\partial u}{\partial t}(x,t,\tau)  &  =g(x,t)+\int\limits_{\tau}^{t}%
%TCIMACRO{\dint \limits_{\mathbb{Q}_{p}^{4}}}%
%BeginExpansion
{\displaystyle\int\limits_{\mathbb{Q}_{p}^{4}}}
%EndExpansion
\frac{\partial Z(x-y,t-\theta,y,\theta)}{\partial t}\left[  g(y,\theta
)-g(x,\theta)\right]  d^{4}yd\theta\text{ }\nonumber\\
&  +\int\limits_{\tau}^{t}%
%TCIMACRO{\dint \limits_{\mathbb{Q}_{p}^{4}}}%
%BeginExpansion
{\displaystyle\int\limits_{\mathbb{Q}_{p}^{4}}}
%EndExpansion
\frac{\partial Z(x-y,t-\theta,y,\theta)}{\partial t}g(x,\theta)d^{4}yd\theta,
\label{derivadas}%
\end{align}
see Lemma \ref{lemma8A}, and
\begin{equation}
(\mathbf{f}(\partial,\gamma)u)(x,t,\tau)=\int\limits_{\tau}^{t}%
%TCIMACRO{\dint \limits_{\mathbb{Q}_{p}^{4}}}%
%BeginExpansion
{\displaystyle\int\limits_{\mathbb{Q}_{p}^{4}}}
%EndExpansion
Z^{\left(  \gamma\right)  }(x-y,t-\theta,y,\theta)g(y,\theta)d^{4}yd\theta,
\label{derivadas1}%
\end{equation}
with $Z^{\left(  \gamma\right)  }\left(  x,t,y,\theta\right)  :=\mathbf{f}%
(\partial,\gamma)Z\left(  x,t,y,\theta\right)  $, for $\lambda<\gamma<\alpha$,
and
\begin{align}
(\mathbf{f}(\partial,\alpha)u)(x,t,\tau)  &  =\int\limits_{\tau}^{t}%
%TCIMACRO{\dint \limits_{\mathbb{Q}_{p}^{4}}}%
%BeginExpansion
{\displaystyle\int\limits_{\mathbb{Q}_{p}^{4}}}
%EndExpansion
Z^{\left(  \alpha\right)  }(x-y,t-\theta,y,\theta)\left[  g(y,\theta
)-g(x,\theta)\right]  d^{4}yd\theta\nonumber\\
&  +\int\limits_{\tau}^{t}%
%TCIMACRO{\dint \limits_{\mathbb{Q}_{p}^{4}}}%
%BeginExpansion
{\displaystyle\int\limits_{\mathbb{Q}_{p}^{4}}}
%EndExpansion
\left[  Z^{\left(  \alpha\right)  }(x-y,t-\theta,y,\theta)-Z^{\left(
\alpha\right)  }(x-y,t-\theta,x,\theta)\right]  g(x,\theta)d^{4}yd\theta.
\label{derivadas2}%
\end{align}
As in \cite{Koch} we look for a fundamental solution of (\ref{cauchy_Pr_2}) of
the form
\begin{equation}
\Gamma(x,t,\xi,\tau)=Z(x-\xi,t-\tau,\xi,\tau)+\int\limits_{0}^{t}%
%TCIMACRO{\dint \limits_{\mathbb{Q}_{p}^{4}}}%
%BeginExpansion
{\displaystyle\int\limits_{\mathbb{Q}_{p}^{4}}}
%EndExpansion
Z(x-\eta,t-\theta,\eta,\theta)\phi(\eta,\theta,\xi,\tau)d^{4}\eta d\theta.
\label{gammaequation}%
\end{equation}
By using formally the formulas given above, we can see that $\phi(x,t,\xi
,\tau)$ is a solution of the integral equation
\begin{equation}
\phi(x,t,\xi,\tau)=R(x,t,\xi,\tau)+\int\limits_{0}^{t}%
%TCIMACRO{\dint \limits_{\mathbb{Q}_{p}^{4}}}%
%BeginExpansion
{\displaystyle\int\limits_{\mathbb{Q}_{p}^{4}}}
%EndExpansion
R(x,t,\eta,\theta)\phi(\eta,\theta,\xi,\tau)d^{4}\eta d\theta\label{ecuafi}%
\end{equation}
where
\begin{align*}
R(x,t,\xi,\tau)  &  =\left[  a_{0}(\xi,\tau)-a_{0}(x,t)\right]  Z^{\left(
\alpha\right)  }(x-\xi,t-\tau,\xi,\tau)\\
&  -\sum_{k=1}^{n}a_{k}(x,t)Z^{\left(  \alpha_{k}\right)  }(x-\xi,t-\tau
,\xi,\tau)-b(x,t)Z(x-\xi,t-\tau,\xi,\tau).
\end{align*}
Integral equation (\ref{ecuafi}) can be solved by the methods of successive
approximations:%
\begin{equation}
\phi(x,t,\xi,\tau)=\sum_{m=1}^{\infty}R_{m}(x,t,\xi,\tau)
\label{sol_suc_aprox}%
\end{equation}
where
\[
R_{1}(x,t,\xi,\tau):=R(x,t,\xi,\tau)
\]
and
\[
R_{m+1}(x,t,\xi,\tau):=\int\limits_{0}^{t}%
%TCIMACRO{\dint \limits_{\mathbb{Q}_{p}^{4}}}%
%BeginExpansion
{\displaystyle\int\limits_{\mathbb{Q}_{p}^{4}}}
%EndExpansion
R(x,t,\eta,\theta)R_{m}(\eta,\theta,\xi,\tau)d^{4}\eta d\theta.
\]
In order to prove the convergence of series (\ref{sol_suc_aprox}), we need the
following lemma which is an easy variation of the Lemmas 6 and 7 given in
\cite{R-V}.

We set $\alpha_{n+1}:=\alpha\left(  1-\nu\right)  >\alpha_{n}$.

\begin{lemma}
\label{lemma8}The following estimations hold:%
\[
\left\vert R(x,t,\xi,\tau)\right\vert \leq C\sum_{k=1}^{n+1}\left(  \left(
t-\tau\right)  ^{1/2\alpha}+||x-\xi||_{p}\right)  ^{-2\alpha_{k}-4}%
\]
and%
\[
\left\vert R_{m}(x,t,\xi,\tau)\right\vert \leq DM^{m}\dfrac{\Gamma\left(
\frac{\nu}{2\alpha}\right)  ^{m}}{\Gamma\left(  \frac{m\nu}{2\alpha}\right)
}\sum_{k=1}^{n+1}\left(  \left(  t-\tau\right)  ^{1/2\alpha}+||x-\xi
||_{p}\right)  ^{-2\alpha_{k}-4}%
\]
where $C$, $M$ and $D$ are a positive constants and $\Gamma\left(
\cdot\right)  $ is the Archimedean Gamma function.
\end{lemma}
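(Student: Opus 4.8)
The plan is to prove the two estimates in turn, obtaining the bound on $R_{m}$ by induction on $m$ with the bound on $R$ as the base case. \emph{For $R$:} I would dominate each of the three groups of terms in $R(x,t,\xi,\tau)$ by a single member of the finite family $\Psi_{k}(s,z):=(s^{1/2\alpha}+||z||_{p})^{-2\alpha_{k}-4}$, $1\le k\le n+1$. For the terms $a_{k}(x,t)Z^{(\alpha_{k})}(x-\xi,t-\tau,\xi,\tau)$ with $1\le k\le n$, the hypothesis $a_{k}\in\mathfrak{M}_{0}$ gives $|a_{k}|\le C$, and (\ref{f_oper_par}) bounds the second factor by $\Psi_{k}(t-\tau,x-\xi)$. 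For $b(x,t)Z(x-\xi,t-\tau,\xi,\tau)$ I would use $|b|\le C$, the estimate (\ref{fund_sol_par}), and $t-\tau\le T$: writing $t-\tau=(t-\tau)^{1-\nu}(t-\tau)^{\nu}\le T^{1-\nu}((t-\tau)^{1/2\alpha}+||x-\xi||_{p})^{2\alpha\nu}$, the prefactor $t-\tau$ in (\ref{fund_sol_par}) turns the exponent $-2\alpha-4$ into $-2\alpha(1-\nu)-4=-2\alpha_{n+1}-4$, giving a $\Psi_{n+1}$ term. For the remaining term $[a_{0}(\xi,\tau)-a_{0}(x,t)]Z^{(\alpha)}(x-\xi,t-\tau,\xi,\tau)$ I would split $a_{0}(\xi,\tau)-a_{0}(x,t)=[a_{0}(\xi,\tau)-a_{0}(x,\tau)]+[a_{0}(x,\tau)-a_{0}(x,t)]$; the second difference is $\le C|t-\tau|^{\nu}\le C((t-\tau)^{1/2\alpha}+||x-\xi||_{p})^{2\alpha\nu}$ by the H\"older hypothesis, while the first difference vanishes when $||x-\xi||_{p}$ does not exceed the (uniform in $\tau$) exponent of local constancy of $a_{0}$ and is $\le 2||a_{0}||_{\infty}$ otherwise, hence is also $\le C((t-\tau)^{1/2\alpha}+||x-\xi||_{p})^{2\alpha\nu}$; multiplying by (\ref{f_oper_par}) with $\gamma=\alpha$ produces another $\Psi_{n+1}$ term. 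Summing the three contributions gives the first inequality, with $\alpha_{n+1}=\alpha(1-\nu)$ being exactly the exponent created by the H\"older gain against the most singular kernels $Z^{(\alpha)}$ and $Z$.

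\emph{For $R_{m}$:} I would induct on $m$ via the recursion $R_{m+1}(x,t,\xi,\tau)=\int_{\tau}^{t}\int_{\mathbb{Q}_{p}^{4}}R(x,t,\eta,\theta)R_{m}(\eta,\theta,\xi,\tau)\,d^{4}\eta\,d\theta$, the case $m=1$ being the estimate above. The inductive step rests on a composition estimate for the kernels $\Psi_{k}$ --- the ``easy variation of \cite[Lemmas 6, 7]{R-V}'' alluded to in the text: since each $\alpha_{k}>0$, one has $\int_{\mathbb{Q}_{p}^{4}}\Psi_{k}(s,z)\,d^{4}z\le C\,s^{-\alpha_{k}/\alpha}$, and the space-time convolution over $\tau<\theta<t$ of any two members $\Psi_{j},\Psi_{k}$ of the family is dominated by a constant times a Beta integral times $\sum_{l}\Psi_{l}(t-\tau,x-\xi)$. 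Feeding in the inductive hypothesis and iterating, the $m$ successive $\theta$-integrations contribute, after telescoping, the factor $\prod_{j=1}^{m-1}\dfrac{\Gamma\!\left(\frac{\nu}{2\alpha}\right)\Gamma\!\left(\frac{j\nu}{2\alpha}\right)}{\Gamma\!\left(\frac{(j+1)\nu}{2\alpha}\right)}=\dfrac{\Gamma\!\left(\frac{\nu}{2\alpha}\right)^{m}}{\Gamma\!\left(\frac{m\nu}{2\alpha}\right)}$, while the finitely many cross terms in $j,k$, the number $n+1$ of exponents, and the powers of $t-\tau\le T$ that accumulate along the iteration are absorbed into the geometric factor $M^{m}$ (and the leading $\Psi$-sum into $D$). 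This yields the claimed bound for $R_{m}$, ensuring (by Stirling) the convergence of the series (\ref{sol_suc_aprox}).

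\emph{Main obstacle.} The delicate point is the composition estimate for the family $\{\Psi_{l}\}$: one must check that convolving (in space and in time over $\tau<\theta<t$) any two of these kernels is controlled, uniformly for $0<t-\tau\le T$, by $C$ times a Beta factor times $\sum_{l}\Psi_{l}$ --- here the hypotheses $\alpha>1$, $\alpha_{n+1}=\alpha(1-\nu)<\alpha$, and the finiteness of the list $\alpha_{1}<\cdots<\alpha_{n+1}$ are what make the various powers comparable in the relevant ranges and let one close the estimate near the diagonal --- and that the time factors assemble into Beta functions with arguments $\tfrac{j\nu}{2\alpha}$ (the exponent $\nu/2\alpha$ reflecting the H\"older exponent $\nu$ together with the parabolic scaling governed by the order $2\alpha$ of $\mathbf{f}(\partial,\alpha)$) so that the Gamma quotients telescope. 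This is exactly the content of the cited variant of \cite[Lemmas 6, 7]{R-V}; I would carry out that computation (cf.\ also \cite[Chapter 4]{Koch}) rather than reproduce it in detail here.
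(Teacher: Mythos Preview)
Your proposal is correct and follows exactly the approach the paper has in mind: the paper itself does not give a proof but simply states that the lemma is ``an easy variation of the Lemmas~6 and~7 given in \cite{R-V}'', and your argument --- bounding the three groups of terms in $R$ by members of the family $\Psi_{k}$ (using (\ref{fund_sol_par}), (\ref{f_oper_par}), the H\"older condition in $t$, and the uniform local constancy in $x$), then iterating via the composition estimate for the $\Psi_{k}$ that produces the Beta-function telescope --- is precisely that variation. The identification $\alpha_{n+1}=\alpha(1-\nu)$ and the role of the H\"older gain against the top-order kernels are exactly as in the text, so there is nothing to add.
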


Lemma \ref{lemma8}\ also implies
\begin{equation}
\left\vert \phi(x,t,\xi,\tau)\right\vert \leq C\sum_{k=1}^{n+1}\left(  \left(
t-\tau\right)  ^{1/2\alpha}+||x-\xi||_{p}\right)  ^{-2\alpha_{k}-4}.
\label{desigualdadphi}%
\end{equation}

\begin{theorem}
\label{Thm2}The function
\begin{equation}
u(x,t)=%
%TCIMACRO{\dint \limits_{\mathbb{Q}_{p}^{4}}}%
%BeginExpansion
{\displaystyle\int\limits_{\mathbb{Q}_{p}^{4}}}
%EndExpansion
\Gamma(x,t,\xi,0)\varphi(\xi)d^{4}\xi+\int\limits_{0}^{t}%
%TCIMACRO{\dint \limits_{\mathbb{Q}_{p}^{4}}}%
%BeginExpansion
{\displaystyle\int\limits_{\mathbb{Q}_{p}^{4}}}
%EndExpansion
\Gamma(x,t,\xi,\tau)g(\xi,\tau)d^{4}\xi d\tau\label{solucion}%
\end{equation}
which is continuous on $\mathbb{Q}_{p}^{n}\times\lbrack0;T]$, continuously
differentiable in $t\in\left(  0,T\right]  $, and belonging to $\mathfrak{M}%
_{2\lambda}$ uniformly with respect to $t$ is a solution of Cauchy problem
(\ref{cauchy_Pr_2}). The fundamental solution $\Gamma(x,t,\xi,\tau)$,
$x,\xi\in\mathbb{Q}_{p}^{4}$, $0\leq\tau<t\leq T$, is of the form
\begin{equation}
\Gamma(x,t,\xi,\tau)=Z(x-\xi,t-\tau,\xi,\tau)+W(x,t,\xi,\tau) \label{gamma1}%
\end{equation}
with
\begin{align}
\left\vert W(x,t,\xi,\tau)\right\vert  &  \leq C\left(  t-\tau\right)
^{1+\nu}\left[  \left(  t-\tau\right)  ^{1/2\alpha}+||x-\xi||_{p}\right]
^{-2\alpha-4}\label{w1}\\
&  +C\left(  t-\tau\right)  \sum_{k=1}^{n+1}\left[  \left(  t-\tau\right)
^{1/2\alpha}+||x-\xi||_{p}\right]  ^{-2\alpha_{k}-4}.\nonumber
\end{align}
Furthermore $Z(x,t,y,\theta)$ satisfies the estimates (\ref{fund_sol_par}),
(\ref{f_oper_par}), (\ref{est_par_Z-par}), (\ref{estimacio}).
\end{theorem}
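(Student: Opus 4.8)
The plan is to carry out the classical Levi (parametrix) construction in the $p$-adic form introduced by A.~N. Kochubei in \cite{Koch0} and developed in \cite{Koch}. The fundamental solution is sought in the form \eqref{gammaequation}; substituting this expression into \eqref{cauchy_Pr_2} and using the identities \eqref{derivadas}--\eqref{derivadas2} for the heat potentials built out of $Z(x-y,t-\theta,y,\theta)$ shows that the density $\phi$ must solve the Volterra integral equation \eqref{ecuafi}, which has already been solved by successive approximations in \eqref{sol_suc_aprox}. By Lemma \ref{lemma8} and the fact that $\Gamma\left(\frac{(m+1)\nu}{2\alpha}\right)/\Gamma\left(\frac{m\nu}{2\alpha}\right)\rightarrow\infty$, the series $\sum_{m}R_{m}$ converges absolutely and uniformly on $\{t-\tau\geq\varepsilon,\ t\leq T\}$ for every $\varepsilon>0$, and produces a function $\phi$ satisfying \eqref{desigualdadphi} which is continuous in its four arguments and locally constant in $x$, uniformly in $(\xi,\tau)$.

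\textbf{Estimate for $W$ and regularity of the potentials.} Insert \eqref{desigualdadphi} and \eqref{fund_sol_par} into
\[
W(x,t,\xi,\tau)=\int_{0}^{t}\int_{\mathbb{Q}_{p}^{4}}Z(x-\eta,t-\theta,\eta,\theta)\,\phi(\eta,\theta,\xi,\tau)\,d^{4}\eta\,d\theta,
\]
and apply the $p$-adic convolution estimates for kernels of the form $\left(b+\|\cdot\|_{p}\right)^{-2\beta-4}$ (the Assertion invoked in Lemma \ref{lemma2} together with its ``product'' versions, cf.\ \cite[Lemmas 6 and 7]{R-V}), integrating first in $\eta$ and then in $\theta$. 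Since $\alpha_{1}<\cdots<\alpha_{n}<\alpha$ and $\alpha_{n+1}=\alpha(1-\nu)$, the contribution of the leading term $[a_{0}(\xi,\tau)-a_{0}(x,t)]Z^{(\alpha)}$ of $R$ yields the first summand of \eqref{w1}, with the H\"older gain $(t-\tau)^{\nu}$ and the exponent $-2\alpha-4$, while the remaining terms of $R$ produce the sum over $k$; this establishes \eqref{gamma1}--\eqref{w1}. Feeding these bounds, together with \eqref{fund_sol_par}, into \eqref{solucion} and repeating the arguments of Lemmas \ref{lemma2} and \ref{lemma5}(i) shows that $u$ is locally constant, lies in $\mathfrak{M}_{2\lambda}$ uniformly in $t$, continuous on $\mathbb{Q}_{p}^{4}\times[0,T]$, and continuously differentiable in $t\in(0,T]$.

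\textbf{$\Gamma$ is a fundamental solution.} Write $u=u_{\varphi}+u_{g}$ as in \eqref{solucion}. Differentiation in $t$ is justified by \eqref{est_par_Z-par}, Lemma \ref{lemma8A}, \eqref{desigualdadphi}, and the Dominated Convergence Theorem. Applying the operator $\partial_{t}+a_{0}(x,t)\mathbf{f}(\partial,\alpha)+\boldsymbol{F}(\partial,\alpha_{1},\ldots,\alpha_{n})$ to $\Gamma$ term by term, using \eqref{for_f_par} and \eqref{int_f_par} for the first summand of \eqref{gammaequation} and \eqref{derivadas}--\eqref{derivadas2} for the potential part — with the freezing point of the coefficients moved to $(\eta,\theta)$, and the source term $g$ (resp.\ the $\delta$-like boundary contribution at $\theta=t$) produced exactly as in Lemma \ref{lemma4} — one finds that all terms recombine into
\[
\phi(x,t,\xi,\tau)-R(x,t,\xi,\tau)-\int_{0}^{t}\int_{\mathbb{Q}_{p}^{4}}R(x,t,\eta,\theta)\,\phi(\eta,\theta,\xi,\tau)\,d^{4}\eta\,d\theta,
\]
which vanishes identically because $\phi$ solves \eqref{ecuafi}; this is precisely the reason \eqref{ecuafi} was set up that way. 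Hence $\frac{\partial u}{\partial t}+a_{0}\mathbf{f}(\partial,\alpha)u+\boldsymbol{F}u=g$ on $(0,T]$. For the initial condition, $\int_{\mathbb{Q}_{p}^{4}}Z(x-\xi,t,\xi,0)\varphi(\xi)d^{4}\xi\rightarrow\varphi(x)$ as $t\rightarrow0^{+}$ by the argument of Lemma \ref{lemma3} (which uses only \eqref{vol_Z_par} and \eqref{fund_sol_par}, both uniform in the parameter), while \eqref{w1} and \eqref{fund_sol_par} combined with \eqref{Key_enq} and $\lambda<\alpha_{1}\leq\alpha_{k}<\alpha$ show that the $W$-part of $u_{\varphi}$ is $O(t^{\nu})$ and $u_{g}$ is $O(t)$; thus $u(x,0)=\varphi(x)$. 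Finally, the estimates \eqref{fund_sol_par}, \eqref{f_oper_par}, \eqref{est_par_Z-par}, \eqref{estimacio} for $Z$ have already been recorded in this section.

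\textbf{Main obstacle.} The delicate step is to differentiate the successive-approximation series term by term and to commute $\mathbf{f}(\partial,\alpha)$ with the $\theta$-integrals, since $\left|(\mathbf{f}(\partial,\alpha)Z)(x-y,t-\theta,y,\theta)\right|$ is of order $\left((t-\theta)^{1/2\alpha}+\|x-y\|_{p}\right)^{-2\alpha-4}$, which sits exactly at the borderline of integrability in $(y,\theta)$ near the diagonal. As in \eqref{derivadas2}, this forces one to subtract the diagonal value of the source before integrating and to exploit the zero-mean identity \eqref{int_f_par} (equivalently Corollary \ref{cor4}), the H\"older continuity in $t$ and uniform local constancy in $x$ of $a_{0},\ldots,a_{n},b$, and the strict inequality $\alpha>\alpha_{n}$, in order to recover a genuinely integrable kernel and the gains in $(t-\tau)$ recorded in Lemma \ref{lemma8}. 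Carrying out these cancellations uniformly in $(\xi,\tau)$ is the heart of the proof; it proceeds exactly as in the corresponding theorems of \cite{Koch}, with the one-dimensional Vladimirov kernel replaced by the four-dimensional elliptic kernel $|f(y)|_{p}^{-\alpha-2}$ and the functional equation \eqref{Functional_eq.} in place of its one-dimensional analogue.
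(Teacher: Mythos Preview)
Your proposal is correct and follows the same Levi parametrix approach as the paper. The only organizational difference is that the paper, rather than applying the operator directly to $\Gamma$ and showing the residual collapses to \eqref{ecuafi}, first substitutes \eqref{gammaequation} into \eqref{solucion} and rewrites $u_{1},u_{2}$ as heat potentials with auxiliary sources
\[
G(\eta,\theta)=\int_{\mathbb{Q}_{p}^{4}}\phi(\eta,\theta,\xi,0)\varphi(\xi)\,d^{4}\xi,\qquad
F(\eta,\theta)=\int_{0}^{\theta}\int_{\mathbb{Q}_{p}^{4}}\phi(\eta,\theta,\xi,\tau)g(\xi,\tau)\,d^{4}\xi\,d\tau,
\]
checks via \eqref{desigualdadphi} and the recursive structure of $\phi$ that $F,G$ are bounded (resp.\ $G=O(\theta^{-\alpha_{n+1}/\alpha})$) and uniformly locally constant, and then applies the already-established differentiation formulas \eqref{derivadas}--\eqref{derivadas2} to these potentials; this packages the interchange of $\mathbf{f}(\partial,\alpha)$ with the $\xi$- and $\tau$-integrations a bit more cleanly than your direct computation, but the substance is identical.
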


\begin{proof}
Denote for $u_{1}(x,t)$ and $u_{2}(x,t)$ the first and second summands in the
right-hand side of (\ref{solucion}). Substituting (\ref{gammaequation}) into
(\ref{solucion}) we get and
\begin{align}
u_{1}(x,t)  &  =%
%TCIMACRO{\dint \limits_{\mathbb{Q}_{p}^{4}}}%
%BeginExpansion
{\displaystyle\int\limits_{\mathbb{Q}_{p}^{4}}}
%EndExpansion
Z(x-\xi,t,\xi,0)\varphi(\xi)d^{4}\xi\label{u1}\\
&  +\int_{0}^{t}%
%TCIMACRO{\dint \limits_{\mathbb{Q}_{p}^{4}}}%
%BeginExpansion
{\displaystyle\int\limits_{\mathbb{Q}_{p}^{4}}}
%EndExpansion
Z(x-\eta,t-\theta,\eta,\theta)G(\eta,\theta)d^{4}\eta d\theta,\nonumber
\end{align}
where
\[
G(\eta,\theta)=%
%TCIMACRO{\dint \limits_{\mathbb{Q}_{p}^{4}}}%
%BeginExpansion
{\displaystyle\int\limits_{\mathbb{Q}_{p}^{4}}}
%EndExpansion
\phi(\eta,\theta,\xi,0)\varphi(\xi)d^{4}\xi d\tau.
\]
and%
\begin{align}
u_{2}(x,t)  &  =\int_{0}^{t}%
%TCIMACRO{\dint \limits_{\mathbb{Q}_{p}^{4}}}%
%BeginExpansion
{\displaystyle\int\limits_{\mathbb{Q}_{p}^{4}}}
%EndExpansion
Z(x-\xi,t-\tau,\xi,\tau)g(\xi,\tau)d^{4}\xi d\tau\label{u2}\\
&  +\int_{0}^{t}%
%TCIMACRO{\dint \limits_{\mathbb{Q}_{p}^{4}}}%
%BeginExpansion
{\displaystyle\int\limits_{\mathbb{Q}_{p}^{4}}}
%EndExpansion
Z(x-\eta,t-\theta,\eta,\theta)F(\eta,\theta)d^{4}\eta d\theta,\nonumber
\end{align}

where%
\[
F(\eta,\theta)=\int_{0}^{\theta}%
%TCIMACRO{\dint \limits_{\mathbb{Q}_{p}^{4}}}%
%BeginExpansion
{\displaystyle\int\limits_{\mathbb{Q}_{p}^{4}}}
%EndExpansion
\phi(\eta,\theta,\xi,\tau)g(\xi,\tau)d^{4}\xi d\tau.
\]

Now by (\ref{desigualdadphi}) and (\ref{Key_enq}),
\[
|F(\eta,\theta)|\leq C,\text{ and }|G(\eta,\theta)|\leq C\theta^{-\alpha
_{n+1}/\alpha}%
\]
for all $\eta\in\mathbb{Q}_{p}^{4}$ and $\theta\in(0,T]$. In addition the
functions $F$ and $G$ are uniformly locally constant. Indeed, by the recursive
definition of the function $\phi$, we see that if $N$ is a local constancy
exponent for all the functions $g$, $\varphi$, $a_{k}$, $b$, $Z^{\left(
\alpha_{k}\right)  }$ and $Z$, and if $|\delta|\leq p^{-N}$, then
\[
\phi(\eta+\delta,\theta,\xi+\delta,\tau)=\phi(\eta,\theta,\xi,\tau),
\]
therefore
\[
F(\eta+\delta,\theta)=F(\eta,\theta),\text{ and }G(\eta+\delta,\theta
)=G(\eta,\theta).
\]
Thus, the potentials in the expressions for $u_{1}(x,t)$ and $u_{2}(x,t)$
satisfy the conditions under which the differentiation formulas
(\ref{derivadas}), (\ref{derivadas1}), (\ref{derivadas2}) were obtained. By
using these formulas one verifies after some simple transformations that
$u(x,t)$ is a solution of the equation in (\ref{cauchy_Pr_2}).

We now show that $u(x,t)\rightarrow\varphi(x)$ as $t\rightarrow0^{+}$. Due to
(\ref{u1}) and (\ref{u2}), it is sufficient to verify that
\[
{v(x,t)=%
%TCIMACRO{\dint \limits_{\mathbb{Q}_{p}^{4}}}%
%BeginExpansion
{\displaystyle\int\limits_{\mathbb{Q}_{p}^{4}}}
%EndExpansion
Z(x-\xi,t,\xi,0)\varphi(\xi)d^{4}\xi\rightarrow\varphi(x)}\text{ as
}t\rightarrow0^{+}.
\]
By virtue of equation (\ref{vol_Z_par}),
\begin{align*}
v(x,t)  &  ={%
%TCIMACRO{\dint \limits_{\mathbb{Q}_{p}^{4}}}%
%BeginExpansion
{\displaystyle\int\limits_{\mathbb{Q}_{p}^{4}}}
%EndExpansion
}\left[  Z(x-\xi,t,\xi,0)-Z(x-\xi,t,x,0)\right]  \varphi(\xi)d^{4}\xi\\
&  +{%
%TCIMACRO{\dint \limits_{\mathbb{Q}_{p}^{4}}}%
%BeginExpansion
{\displaystyle\int\limits_{\mathbb{Q}_{p}^{4}}}
%EndExpansion
}Z(x-\xi,t,x,0)\left[  \varphi(\xi)-\varphi(x)\right]  d^{4}\xi+\varphi(x).
\end{align*}
We now use that $Z$ (as a function of the its third argument) and $\varphi$
are locally constant, then the integrals in the previous formula are performed
over the set
\[
\left\{  \xi\in\mathbb{Q}_{p}^{4};||x-\xi||_{p}\geq p^{-N}\right\}  \text{ for
some }N\in\mathbb{N}.
\]
By applying (\ref{fund_sol_par}) we see that both integrals tend to zero as
$t\rightarrow0^{+}$.
\end{proof}

\subsection{The uniqueness of the solution of the Cauchy problem}

The technique used in \cite[Theorem 4.5]{Koch} for establishing the uniqueness
of the solution of the Cauchy problem associated with perturbations of the
Vladimirov operator can be used to show the uniqueness of the Cauchy problem
(\ref{cauchy_Pr_2}). Then we state here the corresponding result without proof.

\begin{theorem}
\label{Thm3}Assume that the coefficients $a_{k}(x,t)$, $k=0,\ldots,n$ are
non-negative, bounded, continuous functions and that $b(x,t)$ is bounded,
continuous function. Take $0\leq\lambda<\alpha_{1}$(if $a_{1}(x,t)=\dots
=a_{n}(x,t)\equiv0$, then we shall assume that $0\leq\lambda<\alpha$). If
$u\left(  x,t\right)  $ is a solution of (\ref{cauchy_Pr_2}) with $g\left(
x,t\right)  \equiv0$, such that $u\in\mathfrak{M}_{2\lambda}$ uniformly with
respect to $t$, and $\varphi\left(  x\right)  \equiv0$, then $u\left(
x,t\right)  \equiv0$.
\end{theorem}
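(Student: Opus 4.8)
The plan is to argue by a comparison/maximum-principle technique, exactly along the lines of \cite[Theorem 4.5]{Koch}: under the present hypotheses (no H\"older continuity in $t$, no lower bound on $a_{0}$) no fundamental solution for (\ref{cauchy_Pr_2}) is available, so one cannot proceed by duality and must exploit directly the positivity properties of the operators involved. Since $g\equiv0$ and $\varphi\equiv0$, it suffices to show $u\leq0$; applying the same reasoning to $-u$, which is again a solution of (\ref{cauchy_Pr_2}) with zero data lying in $\mathfrak{M}_{2\lambda}$ uniformly in $t$, then gives $u\equiv0$. As a preliminary reduction, replacing $u$ by $v:=e^{-\beta t}u$ with $\beta>\sup|b|$ turns the equation into one of the same form with the same $a_{k}$ and with zeroth-order coefficient $b+\beta\geq0$, preserving the zero data and the growth class; hence I may assume $b\geq0$. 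The key structural fact is then the \emph{positive maximum principle}: if $\psi\in Dom(\mathbf{f})$ attains its global maximum over $\mathbb{Q}_{p}^{4}$ at $z$, then, because the constant $\frac{1-p^{\gamma}}{1-p^{-\gamma-2}}$ in (\ref{Ext_oper}) is negative while $|f(y)|_{p}^{-\gamma-2}>0$, one gets $(\mathbf{f}(\partial,\gamma)\psi)(z)\geq0$ for $0<\gamma\leq\alpha$; combined with $a_{k}\geq0$ and $b\geq0$ this yields $a_{0}(\mathbf{f}(\partial,\alpha)\psi)(z)+(\boldsymbol{F}(\partial,\alpha_{1},\dots,\alpha_{n})\psi)(z)\geq0$ whenever $\psi(z)\geq0$.

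Because $\mathbb{Q}_{p}^{4}$ is non-compact and $u$ is only required to have polynomial growth, the argument needs a barrier. Fix $\sigma$ with $\lambda<\sigma<\alpha_{1}$ (resp. $\lambda<\sigma<\alpha$ if $a_{1}\equiv\dots\equiv a_{n}\equiv0$), which is possible by the standing hypothesis on $\lambda$, and set $w(x):=\max\{1,\|x\|_{p}^{2\sigma}\}$. This $w$ is locally constant, is identically $1$ on $\mathbb{Z}_{p}^{4}$, and satisfies (\ref{condicion1}) for every $\gamma\in\{\alpha_{1},\dots,\alpha_{n},\alpha\}$ since $\int_{\|x\|_{p}\geq1}\|x\|_{p}^{2\sigma-2\gamma-4}d^{4}x<\infty$ precisely because $\sigma<\gamma$. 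Flattening $w$ to a constant on $\mathbb{Z}_{p}^{4}$ is what makes $\mathbf{f}(\partial,\gamma)w$ well behaved: for $x\in\mathbb{Z}_{p}^{4}$ the integral in (\ref{Ext_oper}) reduces to $\frac{1-p^{\gamma}}{1-p^{-\gamma-2}}\int_{\|y\|_{p}>1}(\|y\|_{p}^{2\sigma}-1)|f(y)|_{p}^{-\gamma-2}d^{4}y$, a finite constant; and for $\|x\|_{p}>1$ the contribution of $\{\|y\|_{p}<\|x\|_{p}\}$ vanishes by the strong triangle inequality ($\|x-y\|_{p}=\|x\|_{p}$), while the shells $\|y\|_{p}=\|x\|_{p}$ and $\|y\|_{p}>\|x\|_{p}$ each contribute $O(\|x\|_{p}^{2\sigma-2\gamma})=O(1)$ after using (\ref{Zuniga_ineq}). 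Thus $|(\mathbf{f}(\partial,\gamma)w)(x)|\leq C_{1}$ uniformly in $x$ and in $\gamma\in\{\alpha_{1},\dots,\alpha_{n},\alpha\}$. With the coefficients bounded by $C_{2}$ and $w\geq1$, the function $W(x,t):=e^{Kt}w(x)$ then satisfies
\[
\frac{\partial W}{\partial t}+a_{0}(\mathbf{f}(\partial,\alpha)W)+\boldsymbol{F}(\partial,\alpha_{1},\dots,\alpha_{n})W\geq e^{Kt}\bigl[(K-C_{2})w(x)-C_{1}C_{2}(n+1)\bigr]\geq1
\]
once $K$ is taken large enough; i.e. $W$ is a strict supersolution.

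With the barrier in hand the argument is routine. For $\epsilon>0$ put $u^{\epsilon}:=u-\epsilon W$; then $u^{\epsilon}$ is continuous, $u^{\epsilon}(\cdot,t)$ is locally constant and lies in $Dom(\mathbf{f})$, $u^{\epsilon}$ is $C^{1}$ in $t$, it satisfies $\frac{\partial u^{\epsilon}}{\partial t}+a_{0}(\mathbf{f}(\partial,\alpha)u^{\epsilon})+\boldsymbol{F}(\partial,\alpha_{1},\dots,\alpha_{n})u^{\epsilon}\leq-\epsilon$ for $t>0$, has $u^{\epsilon}(\cdot,0)=-\epsilon w\leq-\epsilon<0$, and—since $\sigma>\lambda$ and $u\in\mathfrak{M}_{2\lambda}$ uniformly in $t$—tends to $-\infty$ as $\|x\|_{p}\to\infty$ uniformly in $t\in[0,T]$. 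Hence, by compactness of balls in $\mathbb{Q}_{p}^{4}$, $u^{\epsilon}$ attains its supremum over $\mathbb{Q}_{p}^{4}\times[0,T]$ at some $(x^{*},t^{*})$. If this supremum were positive, then $t^{*}>0$ by the initial condition, so $\frac{\partial u^{\epsilon}}{\partial t}(x^{*},t^{*})\geq0$, and $x^{*}$ is a global maximum of $u^{\epsilon}(\cdot,t^{*})$ with $u^{\epsilon}(x^{*},t^{*})>0$, so the positive maximum principle gives $a_{0}(\mathbf{f}(\partial,\alpha)u^{\epsilon})(x^{*},t^{*})+(\boldsymbol{F}(\partial,\alpha_{1},\dots,\alpha_{n})u^{\epsilon})(x^{*},t^{*})\geq0$; adding, $0\leq\frac{\partial u^{\epsilon}}{\partial t}+a_{0}(\mathbf{f}(\partial,\alpha)u^{\epsilon})+\boldsymbol{F}(\partial,\alpha_{1},\dots,\alpha_{n})u^{\epsilon}\leq-\epsilon$ at $(x^{*},t^{*})$, a contradiction. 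Therefore $u^{\epsilon}\leq0$ everywhere; letting $\epsilon\to0^{+}$ yields $u\leq0$, and the same applied to $-u$ gives $u\equiv0$.

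The main obstacle is the construction of a usable barrier. A naive choice such as $1+\|x\|_{p}^{2\sigma}$ fails, because $(\mathbf{f}(\partial,\gamma)\|\cdot\|_{p}^{2\sigma})(x)$ blows up like $\|x\|_{p}^{2(\sigma-\gamma)}$ as $x\to0$ (recall $\sigma<\gamma$), so the associated $W$ is not a supersolution near the origin; flattening the barrier to a constant on $\mathbb{Z}_{p}^{4}$—legitimate precisely because $\|\cdot\|_{p}$ takes only discrete values—removes the singular contribution, and the ultrametric geometry is exactly what reduces the remaining estimate to the elementary shell computations above. The two constraints $\lambda<\sigma$ (which forces $u^{\epsilon}\to-\infty$ at infinity) and $\sigma<\alpha_{1}$ (which gives $\mathbf{f}(\partial,\gamma)w\in L^{\infty}$ for every exponent $\gamma$ occurring in the equation) are compatible thanks to the hypothesis $\lambda<\alpha_{1}$ (resp. $\lambda<\alpha$), which is therefore indispensable.
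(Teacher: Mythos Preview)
Your argument is correct and is precisely the approach the paper has in mind: the paper does not give its own proof but states that the technique of \cite[Theorem~4.5]{Koch} carries over, and that technique is the maximum-principle-plus-barrier argument you have written out. Your barrier $w(x)=\max\{1,\|x\|_{p}^{2\sigma}\}$ is the natural four-dimensional analogue of Kochubei's one-dimensional barrier, and the shell computations bounding $(\mathbf{f}(\partial,\gamma)w)$ are the expected adaptations using (\ref{Zuniga_ineq}) in place of the one-dimensional estimate $|f(y)|_{p}\asymp|y|_{p}$; the reduction to $b\geq0$ via $e^{-\beta t}u$ and the role of the hypothesis $\lambda<\alpha_{1}$ in allowing the choice of $\sigma$ match Kochubei's argument as well.
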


\subsection{Probabilistic interpretation}

The fundamental solution of (\ref{cauchy_Pr_2}) $\Gamma(x,t,\xi,\tau)$ is the
transition function for a random walk on $\mathbb{Q}_{p}^{4}$. This result can
be established by using classical results on stochastic processes see e.g.
\cite{Dyn} and the techniques given in \cite[pp. 161-162]{Koch}. Formally we have

\begin{theorem}
\label{Thm4}Assume that the coefficients $a_{k}(x,t)$, $k=0,\ldots,n$ and
$b(x,t)$ are non-negative, bounded, continuous functions. The fundamental
solution $\Gamma(x,t,\xi,\tau)$ is the transition density of a bounded
right-continuous Markov process without second kind discontinuities.
\end{theorem}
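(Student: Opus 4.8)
The plan is to show that $\{\Gamma(x,t,\xi,\tau):0\le\tau<t\le T\}$ is a normal, (sub-)Markovian transition function on $\mathbb{Q}_p^4$ and then to apply the general existence and regularization theorems for Markov processes from \cite{Dyn}, following the route of \cite[pp.~161--162]{Koch}. First I would establish $\Gamma(x,t,\xi,\tau)\ge 0$: by Theorem \ref{Thm2}, for non-negative $\varphi\in\mathfrak{M}_0$ the function $u(x,t)=\int_{\mathbb{Q}_p^4}\Gamma(x,t,\xi,\tau)\varphi(\xi)\,d^4\xi$ solves (\ref{cauchy_Pr_2}) with $g\equiv 0$, and since $a_0\ge\mu>0$, $a_k\ge 0$, $b\ge 0$, a maximum-principle comparison (as in \cite{Koch}) forces $u\ge 0$; as $\varphi$ is arbitrary and $\Gamma$ is continuous, $\Gamma\ge 0$. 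Taking $\varphi\equiv 1$ and comparing $m(x,t):=\int_{\mathbb{Q}_p^4}\Gamma(x,t,\xi,\tau)\,d^4\xi$, which solves the homogeneous equation with $m(\cdot,\tau)\equiv 1$, against the supersolution $1$ (note $\partial_t 1+a_0\mathbf{f}(\partial,\alpha)1+\boldsymbol{F}1=b\ge 0$, using $\mathbf{f}(\partial,\alpha_k)1=\mathbf{f}(\partial,\alpha)1=0$) gives $0\le m\le 1$, with $m\equiv 1$ when $b\equiv 0$; thus $\Gamma$ is a sub-Markovian transition density, the mass defect being absorbed by a cemetery point adjoined to $\mathbb{Q}_p^4$. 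Normality, i.e.\ $\int_{\mathbb{Q}_p^4}\Gamma(x,t,\xi,\tau)\varphi(\xi)\,d^4\xi\to\varphi(x)$ as $t\to\tau^+$, is already contained in the proof of Theorem \ref{Thm2}.

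\textbf{Chapman--Kolmogorov.} Fix $\tau<\sigma<t$ and $\xi$. The functions $v_1(x,t):=\Gamma(x,t,\xi,\tau)$ and $v_2(x,t):=\int_{\mathbb{Q}_p^4}\Gamma(x,t,y,\sigma)\Gamma(y,\sigma,\xi,\tau)\,d^4y$, viewed as functions of $(x,t)$ on $\mathbb{Q}_p^4\times[\sigma,T]$, are continuous, continuously differentiable in $t\in(\sigma,T]$, lie in $\mathfrak{M}_0$ uniformly in $t$ (by (\ref{fund_sol_par}), (\ref{gamma1}), (\ref{w1}), since $t-\tau\ge\sigma-\tau>0$), solve the equation in (\ref{cauchy_Pr_2}) with $g\equiv 0$ --- for $v_2$ the differentiation under the integral sign being legitimate thanks to the estimates (\ref{fund_sol_par}), (\ref{f_oper_par}), (\ref{est_par_Z-par}), (\ref{w1}), (\ref{desigualdadphi}) --- and satisfy $v_1(\cdot,\sigma)=v_2(\cdot,\sigma)=\Gamma(\cdot,\sigma,\xi,\tau)$. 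Applying the uniqueness Theorem \ref{Thm3} to $v_1-v_2$ (with initial instant $\sigma$) yields $v_1\equiv v_2$, which is the Chapman--Kolmogorov identity; joint Borel measurability of $\Gamma$ is clear from (\ref{gamma1}) and (\ref{sol_suc_aprox}).

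\textbf{Construction and path regularity.} By \cite{Dyn}, a normal sub-Markovian transition function obeying Chapman--Kolmogorov on the locally compact Borel space $\mathbb{Q}_p^4\cup\{\partial\}$ determines a Markov process with these transition probabilities. To obtain right-continuity and the absence of discontinuities of the second kind I would invoke Dynkin's regularity criterion, which needs a small-time bound on the tail mass: from (\ref{fund_sol_par}) and (\ref{w1}), for every ball $B^4_m(x_0)$ and every $x$ near $x_0$, $\int_{\mathbb{Q}_p^4\setminus B^4_m(x_0)}\Gamma(x,t,\xi,\tau)\,d^4\xi\le C\,(t-\tau)\,p^{-2m\alpha_1}\to 0$ as $t-\tau\to 0^+$, uniformly; this is exactly Dynkin's hypothesis yielding a bounded right-continuous modification without discontinuities of the second kind.

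\textbf{The main obstacle.} The delicate points are the positivity of $\Gamma$ --- since in the Levi parametrix $\Gamma=Z+W$ the correction $W$ has indefinite sign, positivity cannot be read off from the construction (\ref{gammaequation})--(\ref{sol_suc_aprox}) and must be routed through the maximum principle / uniqueness --- and the bookkeeping, hinging on the full set of estimates of Section \ref{Sect.Eq.Var.Coeff.}, needed to differentiate $v_2$ under the integral in the Chapman--Kolmogorov step; the remaining probabilistic steps are a routine transcription of \cite{Dyn} as in \cite[pp.~161--162]{Koch}.
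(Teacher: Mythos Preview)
Your proposal is correct and follows exactly the route the paper indicates: the paper gives no proof of Theorem~\ref{Thm4} at all, merely stating that it ``can be established by using classical results on stochastic processes see e.g.\ \cite{Dyn} and the techniques given in \cite[pp.~161--162]{Koch}.'' Your outline --- positivity via the maximum principle, the sub-Markov property, normality from the initial-value behaviour already shown in Theorem~\ref{Thm2}, Chapman--Kolmogorov via the uniqueness Theorem~\ref{Thm3}, and path regularity from Dynkin's tail criterion using the estimates (\ref{fund_sol_par}) and (\ref{w1}) --- is precisely the content of those pages in Kochubei, transcribed to the present four-dimensional setting, and your identification of the two genuinely delicate points (positivity of $\Gamma$ and the differentiation under the integral in the Chapman--Kolmogorov step) is accurate.
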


\subsection{Quadratic forms of dimension two}

All the results presented in this article are valid if for the quadratic forms
of type $\xi_{1}^{2}-\tau\xi_{2}^{2}$ where $\tau\in\mathbb{Q}_{p}%
\smallsetminus\left\{  0\right\}  $ is a not square of an element of
$\mathbb{Q}_{p}$, see \cite{C-Zu1}.

\mathstrut

\bigskip

\end{document}